\newcommand{\ccirc}{\kern0.5ex\vcenter{\hbox{$\scriptstyle\circ$}}\kern0.5ex}
\newtheorem{Theorem}[subsection]{Theorem}
\newtheorem{Lemma}[subsection]{Lemma}
\newtheorem{Proposition}[subsection]{Proposition}
\newtheorem{Corollary}[subsection]{Corollary}
\newtheorem{theorem}{Theorem}
\theoremstyle{definition}
\newtheorem*{Example}{Example}
\newtheorem{Definition}[subsection]{Definition}
\newtheorem*{Remark}{Remark}
\numberwithin{equation}{section}
\def\H{\mathbb H}
\def\P{\mathbb P}
\def\GL{\mathrm{GL}}
\def\SL{\mathrm{SL}}
\def\SU{\mathrm{SU}}
\def\tr{\operatorname{tr}}
\def\N{\mathbb N}
\def\C{\mathbb C}
\def\P{\mathbb P}
\def\Q{\mathbb Q}
\def\R{\mathbb R}
\def\x{\times}
\def\Z{\mathbb Z}
\def\mod{\  \mathrm{mod}\ }
\def\Im{\mathrm{Im\,}}
\def\SL{\mathrm{SL}}
\def\wt{\widetilde}
\def\M#1#2#3#4{\begin{pmatrix}#1&#2\\#3&#4\end{pmatrix}}
\def\SM#1#2#3#4{\left(\begin{smallmatrix}#1&#2\\#3&#4\end{smallmatrix}
  \right)}
\def\mb #1{\mathbf{#1}}
\def\rm #1{\mathrm{#1}}
\def\X #1,#2{X^{#1}_0(#2)}
\def\braces #1{\left\{#1\right\}}
\def\abs#1{\left|#1\right|}
\begin{document}

\title[modular differential equations]{Metrics with Positive constant curvature and modular differential equations}

\author{Jia-Wei Guo, Chang-Shou Lin, and Yifan Yang}

\address{Department of Mathematics, National Taiwan
University, Taipei, Taiwan 10617.}
\email{jiaweiguo312@gmail.com}

\address{Center for Advanced Study in Theoretical Sciences (CASTS), National Taiwan University, Taipei, Taiwan 10617.} 
\email{cslin@math.ntu.edu.tw}

\address{Department of Mathematics, National Taiwan
University and National Center for Theoretical Sciences, Taipei,
Taiwan 10617.}
\email{yangyifan@ntu.edu.tw}

\begin{abstract}
  Let $\H^\ast=\H\cup\Q\cup\{\infty\}$, where $\H$ is the complex
  upper half-plane, and $Q(z)$ be a meromorphic modular form of weight
  $4$ on $\SL(2,\Z)$ such that the differential equation
  $\mathcal L:y''(z)=Q(z)y(z)$ is Fuchsian on $\H^\ast$. In this paper, we
  consider the problem when $\mathcal L$ is apparent on $\H$, i.e.,
  the ratio of any two nonzero solutions of $\mathcal L$ is
  single-valued and meromorphic on $\H$. Such a modular differential
  equation is closely related to the existence of a conformal metric
  $ds^2=e^u|dz|^2$ on $\H$ with curvature $1/2$ that is invariant
  under $z\mapsto\gamma\cdot z$ for all $\gamma\in\SL(2,\Z)$.

  Let $\pm\kappa_\infty$ be the local exponents of $\mathcal L$ at
  $\infty$. In the case $\kappa_\infty\in\frac12\Z$, we obtain the
  following results:
  \begin{enumerate}
    \item[(a)] a complete characterization of $Q(z)$ such that
      $\mathcal L$ is apparent on $\H$ with only one singularity (up
      to $\SL(2,\Z)$-equivalence) at $i=\sqrt{-1}$ or
      $\rho=(1+\sqrt3i)/2$, and
    \item[(b)] a complete characterization of $Q(z)$ such that
      $\mathcal L$ is apparent on $\H^\ast$ with singularities only at
      $i$ and $\rho$.
  \end{enumerate}
  We provide two proofs of the results, one using Riemann's existence
  theorem and the other using Eremenko's theorem on the existence of
  conformal metric on the sphere.

  In the case $\kappa_\infty\notin\frac12\Z$, we let
  $r_\infty\in(0,1/2)$ be defined by
  $r_\infty\equiv\pm\kappa_\infty\mod 1$. Assume that
  $r_\infty\notin\{1/12,5/12\}$. A special case of an earlier result
  of Eremenko and Tarasov says that $1/12<r_\infty<5/12$ is the
  necessary and sufficient condition for the existence of the
  invariant metric. The threshold case $r_\infty\in\{1/12,5/12\}$ is
  more delicate. We show that in the threshold case, an invariant
  metric exists if and only if $\mathcal L$ has two linearly
  independent solutions whose squares are meromorphic modular forms of
  weight $-2$ with a pair of conjugate characters on $\SL(2,\Z)$.
  In the non-existence case, our example shows that the monodromy data
  of $\mathcal L$ are related to periods of the elliptic curve
  $y^2=x^3-1728$.
\end{abstract}
\date{\today}
\thanks{We thank the referee for providing the reference
  \cite{Eremenko-Tarasov} and for detailed comments that improve the
  exposition of the paper.}

\maketitle

\section{Introduction}
A meromorphic function $Q$ on the upper half-plane $\H$ is called a
meromorphic modular form of weight $k\in\Z$ with respect to
$\SL(2,\Z)$ if $Q$ satisfies
$$
Q(\gamma\cdot z)=(cz+d)^kQ(z),\quad \gamma=\M abcd\in\SL(2,\Z),
$$
and $Q$ is also meromorphic at the cusp $\infty$. 
When $k=0$, a meromorphic modular form is called a modular function.
We refer to \cite{Apostol} and \cite{Serre} for the elementary theory
of (holomorphic) modular forms. Given a meromorphic modular form $Q$
of weight $4$ on $\SL(2,\Z)$, we consider a Fuchsian modular
differential equation of second order on $\H$
\begin{equation}\label{(1.1)}
y''=Q(z)y\quad \text{on}\ \H,\qquad y':=\frac{dy}{dz}.
\end{equation}
The differential equation \eqref{(1.1)} is called Fuchsian if the
order of any pole of $Q$ is less than or equal to $2$. At $\infty$,
by using $q=e^{2\pi iz}$, \eqref{(1.1)} can be written as
\begin{equation}
\left(q\frac{d}{dq}\right)^2y=-\frac{1}{4\pi^2}y''=-\frac{Q(z)}{4\pi^2}y.
\end{equation}
So \eqref{(1.1)} is Fuchsian at $\infty$ if and only if $Q$ is holomorphic at $\infty$.

Suppose that $z_0$ is a pole of $Q$.
The local exponents of \eqref{(1.1)} are $1/2\pm\kappa$, $\kappa\ge0$.
If the difference $2\kappa$ of the two local exponents is an integer,
then the ODE \eqref{(1.1)} might have a solution with a logarithmic
singularity at $z_0$. A singular point $z_0$ of \eqref{(1.1)} is
called \emph{apparent} if the local exponents are $1/2\pm\kappa$ with
$\kappa\in\frac{1}{2}\Z_{\ge0}$ and any solution of \eqref{(1.1)} has no
logarithmic singularity near $z_0$. In such a case, it is necessary
that $\kappa>0$. The ODE \eqref{(1.1)} or $Q$ is called
\emph{apparent} if \eqref{(1.1)} is apparent at any pole of $Q$ on
$\H$. Clearly, if \eqref{(1.1)} is apparent then the local monodromy
matrix at any pole is $\pm I$, where $I$ is the $2\x 2$ identity
matrix.

A solution $y(z)$ of \eqref{(1.1)} might be multi-valued. For
$\gamma\in\SL(2,\Z)$, $y(\gamma\cdot z)$ is understood as the analytic
continuation of $y$ along a path connecting $z$ and $\gamma\cdot
z$. Even though $y(\gamma\cdot z)$ is not well-defined, the slash
operator of weight $k$ ($k\in\Z$) can be defined in the usual way
by
\begin{equation}
  \left(y\big|_k\gamma\right)(z):=(cz+d)^{-k}y(\gamma\cdot z),\quad
  \gamma=\M abcd\in\SL(2,\Z),
\end{equation}
where $\gamma\cdot z=(az+b)/(cz+d)$. We have the well-known
Bol's identity \cite{Bol}
  $$
  \left(y\big|_{-1}\gamma\right)^{(2)}(z)=\left(y^{(2)}\big|_{3}
    \gamma\right)(z).
    $$
Hence, if $y(z)$ is a solution of \eqref{(1.1)}, then
$\left(y\big|_{-1}\gamma\right)(z)$ is also a solution of
\eqref{(1.1)}. Here $f^{(k)}(z)$ is the $k$-th derivative of $f(z)$.



Suppose that \eqref{(1.1)} is \emph{apparent} and $y_i$, $i=1,2$, are two
independent solutions. Since the local monodromy matrix at any pole of
$Q$ is $\pm I$, the ratio $h(z)=y_2(z)/y_1(z)$ is well-defined and
meromorphic on $\H$. By Bol's identity, both
$\left(y_i\big|_{-1}\gamma\right)(z)$ are 
solutions of \eqref{(1.1)}, where $y_1(\gamma\cdot z)$ and
$y_2(\gamma\cdot z)$ are understood as the analytic continuation
of $y_1(z)$ and $y_2(z)$ along the same path connecting $z$ and
$\gamma\cdot z$. Note that since \eqref{(1.1)} is assumed to be
apparent, difference choices of paths from $z$ to $\gamma\cdot z$ only
result in sign changes in $y_1(\gamma\cdot z)$ and $y_2(\gamma\cdot
z)$. Therefore, there is a matrix $\rho(\gamma)$ in $\GL(2,\C)$ such that
\begin{equation}\label{equation: representation of weight -1}
\begin{pmatrix}
\left(y_1\big|_{-1}\gamma\right)(z)\\
\left(y_2\big|_{-1}\gamma\right)(z)
\end{pmatrix}=\pm\rho(\gamma)\begin{pmatrix}
y_1(z)\\
y_2(z)
\end{pmatrix}.
\end{equation}
Note that $\det \rho(\gamma)=1$ because the two Wronskians of fundamental solutions $\left( y_1\big|_{-1}\gamma,y_2\big|_{-1}\gamma\right)$ and $(y_1,y_2)$ are equal.
Hence $\rho$ is a homomorphism from $\SL(2,\Z)$ to $\rm{PSL}(2,\C)$.
In this
  paper, we call the homomorphism $\gamma\mapsto\pm\rho(\gamma)$
\emph{the Bol representation} associated to \eqref{(1.1)}.

There is an old problem in conformal geometry related to
\eqref{(1.1)}. The problem is to find a metric $ds^2$ with curvature
$1/2$ on $\H$ that is locally conformal to the flat metric and
invariant under the change $z\mapsto\gamma\cdot z$,
$\gamma\in\SL(2,\Z)$. Write $ds^2=e^u\abs{dz}^2$.
Below, we collect some basic results concerning the metric which will
be proved in Section $2$.
\begin{enumerate}
\item[(1)] The curvature condition is equivalent to saying that $u$
  satisfies the curvature equation \eqref{2.4}. Then
\begin{equation}
Q(z)=-\frac{1}{2}\left( u_{zz}-\frac{1}{2}u^2_z\right)
\end{equation}
is a meromorphic function.
\item[(2)]
 The invariant condition ensures that $Q$ is a meromorphic
modular form of weight $4$ with respect to $\SL(2,\Z)$ and holomorphic at $\infty$. Moreover, $Q(\infty)\le0$.

\item[(3)]
The metric might have  conic singularity at some $p\in\H$ with a conic
angle $\theta_p$, and the metric is smooth at $p$ if and only
$\theta_p=1$. Thus $Q$ has a pole at $p$ if and only $ds^2$ has a
conic singularity at $p$ (i.e., $\theta_p\neq 1$), provided that
$p\not\in\braces{\gamma\cdot i,\gamma\cdot\rho:\gamma\in\SL(2,\Z)}$,
where $i=\sqrt{-1}$ and $\rho=(1+\sqrt{-3})/2$.

\item[(4)]
Let $1/2\pm\kappa_p$, $\kappa_p>0$
be the local exponents at $p$ of \eqref{(1.1)} with this $Q$. 
Then $\theta_p=2\kappa_p/e_p$, where $e_p$ is the elliptic order of
$p$. Moreover, if $\kappa_p\in\frac{1}{2}\Z$ for any $p$, then \eqref{(1.1)} is
automatically apparent.
\end{enumerate}

We say the solution $u$ or the metric $e^u\abs{dz}^2$ \emph{realizes}
$Q$ or the associated ODE \eqref{(1.1)} is realized by $u$. We note
that for a given $Q$, finding a metric $e^u\abs{dz}^2$ realizing $Q$
is equivalent to solving the curvature equation \eqref{2.4} in Section
2 with the RHS being $4\pi\sum n_p\delta_p$, where $n_p=2\kappa_p-1$,
$\delta_p$ is the Dirac measure at $p\in\H$ and the summation runs
over all poles of $Q$ on $\H$. In particular, $\kappa_p\in\frac{1}{2}\N$,
if and only if the coefficient $n_p\in\N$, the set of positive
integers.

In view of this connection, throughout the paper, we assume that the ODE
\eqref{(1.1)} satisfy the following conditions ($\mb H_1$) or
($\mb H_2$).\\

\noindent{($\mb{H_1)}$} The ODE \eqref{(1.1)} is apparent with the local
exponents $1/2\pm\kappa_p$ at any pole $p$ of $Q$,
$\kappa_p\in\frac{1}{2}\N$, and $Q(\infty)\le0$. Denote the local
exponents at $\infty$ by $\pm\kappa_\infty$. Moreover, if
$p\not\in\braces{i,\rho}$, then $\kappa_p>1/2$.\\

Note that $Q(z)$ is smooth at $p$ if and only if $\kappa_p=1/2$, so
the requirement $\kappa_p>1/2$ means that that $Q(z)$ has a pole
at $p$. 
Note that by (4), the angle $\theta_\rho$ at $\rho$ is
$2\kappa_\rho/3$ and $\theta_i$ at $i$ is $\kappa_i$.\\

\noindent{($\mb{H_2}$)} The angles $\theta_\rho$ and $\theta_i$ are not integers.\\

Suppose $\kappa_\infty\not\in\frac{1}{2}\N$.  Then there is $r_\infty\in (0,1/2)$ such that
\begin{equation} \label{equation: r}
\text{either}\ \kappa_\infty\equiv r_\infty\mod 1\quad\text{or}\quad\kappa_\infty\equiv-r_\infty\mod 1.
\end{equation}

\begin{Theorem}\label{theorem: 1.1}
Suppose that \eqref{(1.1)} satisfies ($\mb H_1$), ($\mb
  H_2$), and $\kappa_\infty\not\in\frac{1}{2}\N$.
If $1/12<r_\infty<5/12$, then there is
an invariant metric of curvature $1/2$ realizing $Q$. Moreover, the metric is
unique. Conversely, if $Q$ is realized then $1/12\leq r_\infty\leq
5/12$.

Furthermore, assume that $r_\infty=1/12$ or $r_\infty=5/12$. Let
$\chi$ be the character of $\SL(2,\Z)$ determined by
$$
\chi(T)=e^{2\pi i/6}, \qquad \chi(S)=-1,
$$
where $T=\SM1101$ and $S=\SM0{-1}10$. Then there is an invariant metric
of curvature $1/2$ realizing $Q$ if and only there are two solutions
$y_1(z)$ and $y_2(z)$ of \eqref{(1.1)} such that $y_1(z)^2$ and
$y_2(z)^2$ are meromorphic modular forms of weight $-2$ with character
$\chi$ and $\overline\chi$, respectively, on $\SL(2,\Z)$.
\end{Theorem}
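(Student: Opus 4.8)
The plan is to analyze the Bol representation $\gamma\mapsto\pm\rho(\gamma)$ attached to $\mathcal L$ in the threshold case and show that the existence of an invariant metric forces, and is forced by, $\rho$ being conjugate to a representation landing in a suitable subgroup. Since $\kappa_\infty\not\in\frac12\Z$ but $r_\infty\in\{1/12,5/12\}$, the local monodromy of $\mathcal L$ at $\infty$ has eigenvalues $e^{\pm 2\pi i\kappa_\infty}$, which are primitive $12$th roots of unity up to sign; thus $\rho(T)$ has order $12$ (or $6$ in $\mathrm{PSL}(2,\C)$). First I would recall from Section 2 (items (1)--(4) and the discussion after ($\mb H_1$)) that an invariant metric realizing $Q$ exists precisely when the curvature equation \eqref{2.4} with data $4\pi\sum n_p\delta_p$, $n_p=2\kappa_p-1$, has a solution on $\H$ that descends to $X(1)$, equivalently when $\mathcal L$ has a fundamental system whose monodromy representation $\rho$ is \emph{unitarizable} (conjugate into $\mathrm{PSU}(1,1)$) — this is the developing-map/Schwarzian description of conformal metrics of positive curvature, and it is where Eremenko--Tarasov enters in the non-threshold case.

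Next I would make the key observation that $\mathrm{PSU}(1,1)$ contains an element of order $6$ but the ambient constraint coming from the cusp is rigid at the threshold: when $r_\infty=1/12$ or $5/12$, a unitarizable $\rho$ with $\rho(T)$ of order $6$ in $\mathrm{PSL}(2,\C)$ and $\rho(S)$ of order $2$ must in fact be (conjugate to) a representation into the group generated by a rotation of order $12$ and an element of order $4$, which — after passing to the $\mathrm{GL}(2,\C)$ lift on squares of solutions — is abelian of the expected type. Concretely, I would show: if the metric exists, diagonalize the (finite-order) monodromy, so that there is a solution $y_1$ with $y_1|_{-1}\gamma=\pm\chi'(\gamma)y_1$ for a character $\chi'$ of $\SL(2,\Z)$; then $y_1^2$ transforms under $|_{-2}$ by $\chi'^2$, which is a genuine (sign-free) character of $\SL(2,\Z)$, hence one of the finitely many powers of the generator sending $T\mapsto\zeta_6$, $S\mapsto\pm1$. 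Matching the local exponent $\kappa_\infty$ at $\infty$ against the $q$-expansion forces $\chi'^2=\chi$ (or $\overline\chi$), the character in the statement; the other solution $y_2$ then gives $y_2^2$ with the conjugate character by the $\det\rho(\gamma)=1$ relation. Conversely, if such $y_1,y_2$ exist, then $h=y_2/y_1$ is a meromorphic modular \emph{function} on $\H$ (since $y_1^2$ and $y_2^2$ have conjugate characters, their ratio is $|_0$-invariant), and one builds $e^u=\,$const$\cdot(1-|h|^2)^{-2}|h'|^2$-type formula to produce the invariant metric, checking positivity via the condition $1/12\le r_\infty\le 5/12$ and checking the conic angles at $i,\rho$ and the cusp come out as dictated by ($\mb H_1$), ($\mb H_2$).

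I expect the main obstacle to be the \emph{rigidity argument at the threshold}: showing that unitarizability of $\rho$ together with $r_\infty\in\{1/12,5/12\}$ forces the monodromy to be simultaneously diagonalizable (i.e. $\rho$ abelian), rather than merely conjugate into $\mathrm{PSU}(1,1)$. Away from the threshold, $1/12<r_\infty<5/12$ gives genuine room and the representation need not be abelian; exactly at $1/12$ and $5/12$ the ``triangle-group inequality'' for the angles $(\theta_i,\theta_\rho,\theta_\infty)=(\,\cdot\,,\cdot\,,2r_\infty)$ degenerates, and one must argue that a spherical cone metric with these data on the sphere can only be the pullback of the round metric under a rational map of the expected degree — this is the point where I would invoke Eremenko's classification of the degenerate case, or alternatively run a direct Fuchsian-ODE computation showing the accessory parameter is pinned and the solutions are algebraic of the stated form. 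Once diagonalizability is in hand, the translation to ``$y_i^2$ are modular forms of weight $-2$ with characters $\chi,\overline\chi$'' is bookkeeping with Bol's identity and $q$-expansions, and the converse direction is the standard construction of the developing map from a modular function $h$ with the right ramification over $0,1,\infty$ on $X(1)$.
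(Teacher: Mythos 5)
Your overall strategy --- reduce existence of the invariant metric to unitarizability of the Bol representation, show that at the threshold unitarizability forces the representation to be simultaneously diagonalizable, and then translate diagonal monodromy into the statement that $y_1^2$ and $y_2^2$ are weight $-2$ forms with characters $\chi$ and $\overline\chi$ --- is the same as the paper's. However, the step you yourself flag as ``the main obstacle'', namely the rigidity argument showing that a unitarizable $\rho$ with $r_\infty\in\{1/12,5/12\}$ must be abelian, is precisely the content of the proof, and you do not supply it: ``invoke Eremenko's classification of the degenerate case'' or ``run a direct Fuchsian-ODE computation'' is a placeholder, not an argument. The paper settles this with a short elementary computation using the group relations $S^2=-I$ and $(TS)^3=-I$ in $\SL(2,\Z)$. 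Choosing a basis with $\rho(T)=\pm\SM{\epsilon}00{\overline\epsilon}$, $\epsilon=e^{2\pi i/12}$, one shows $\rho(S)\ne\pm I$, hence $\tr\rho(S)=0$ and $\rho(S)=\pm\SM abc{-a}$; the condition $\tr\rho(TS)=\pm1$ then gives $a=\pm i$ and $bc=0$. If exactly one of $b,c$ is nonzero, $\rho(S)$ and the diagonal $\rho(T)$ (which has distinct eigenvalues) cannot be made simultaneously unitary, because any conjugation keeping $\rho(T)$ diagonal is itself diagonal and preserves which of $b,c$ vanish. Hence $b=c=0$ and $\rho$ is diagonal. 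This is the missing idea, and without it your proof does not close.

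Two further points would need correcting even with that gap filled. First, for a metric of curvature $+1/2$ the developing map has monodromy in $\mathrm{PSU}(2)$, not $\mathrm{PSU}(1,1)$, and the Liouville formula is $e^u=8|h'|^2/\left(1+|h|^2\right)^2$; your $\left(1-|h|^2\right)^{-2}$ is the hyperbolic (negative-curvature) normalization, so the positivity discussion as written is off. Second, in the converse direction $h=y_2/y_1$ is \emph{not} a modular function on $\SL(2,\Z)$: since $y_1^2$ and $y_2^2$ carry the distinct characters $\chi$ and $\overline\chi$, the ratio $h^2$ transforms by $\overline\chi^2\neq 1$. What is true, and what suffices for the construction, is that $h(\gamma z)=\lambda_\gamma h(z)$ with $|\lambda_\gamma|=1$, i.e.\ $h$ transforms by a unitary M\"obius map, so the Liouville metric built from $h$ is $\SL(2,\Z)$-invariant; equivalently, the Bol representation in the basis $\{y_1,y_2\}$ is diagonal unitary and Proposition \ref{proposition: Q can be realized} applies directly.
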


\begin{Remark}
Let $\H^*=\H\cup\Q\cup\braces\infty$. Since $\SL(2,\Z)\backslash\H^\ast$ is
conformally diffeomorphic to the standard sphere $S^2$, Theorem
\ref{theorem: 1.1} can be formulated in terms of the existence of
metrics on $S^2$ with prescribed singularities at poles of $Q$ and
prescribed angle $\theta_p$ at each singular point $p$. In this sense,
Theorem \ref{theorem: 1.1} is a special case of a result of Eremenko
and Tarasov \cite{Eremenko-Tarasov}\footnote{We
  thank the referee for pointing out this and providing the
  reference.},
quoted as Theorem
\ref{theorem: ET} in the appendix. In the appendix, we give an
alternative and self-contained proof of their result in the form of
Theorem \ref{thm1} as it is
elementary and involves only straightforward matrix computation.
(In the notation of Theorem \ref{thm1}, Theorem
\ref{theorem: 1.1} corresponds to the case $\theta_1=1/2$,
$\theta_2=1/3$, and $\theta_3=2r_\infty$ or
$\theta_3=1-2r_\infty$, depending on whether $2r_\infty\le1/2$ or
$2r_\infty>1/2$.)

The threshold case $r_\infty\in\{1/12,5/12\}$ is more delicate. In
Section \ref{section: proof of theorem 1.1}, we provides examples
of existence and nonexistence of
an invariant metric with $r_\infty=1/12$. Our examples suggest that to
each $Q(z)$ with $r_\infty\in\{1/12,5/12\}$, one may associate a meromorphic
differential $1$-form $\omega$ of the second kind on a certain
elliptic curve $E$, and whether there exists an invariant metric
realizing $Q$ hinges on whether $\omega$ is exact, i.e., whether
$\omega$ is the identity element in the first de Rham cohomology group
of $E$. Also, in the nonexistence example, we find that the entries in
the monodromy matrices can be expressed in terms of periods or the
central value of the $L$-function of the elliptic curve
$y^2=x^3-1728$. We plan to study the threshold case in more details in
the future.
\end{Remark}

Motivated by Theorem \ref{theorem: 1.1}, we consider the datas given below.
\begin{equation}\label{equation: datas}
\begin{split}
&\text{A set of positive half-integers
}\ \kappa_\rho,\kappa_i, \kappa_j\in\N/2, j=1,2,\ldots,m,\\
&\text{such\ that}\ 2\kappa_\rho/3\not\in\N,\ \kappa_i\not\in\N;\ \text{a set of inequivalent points}\ p_j\in\H,\\
&j=1,2,\ldots, m;\text{ and  a positve number}\ \kappa_\infty.
\end{split}
\end{equation}

\begin{Definition}\label{definition: Q is equipped}
We say $Q$ is \emph{equipped with \eqref{equation: datas}} if
\begin{enumerate}
\item[(i)] $\braces{\rho,i,z_j:1\leq j\leq m}$ are the set of poles of $Q$;
\item[(ii)]The local exponents of $Q$ at $\rho, i, z_j$ are $1/2\pm\kappa_\rho$, $1/2\pm\kappa_i$ and $1/2\pm\kappa_j$, respectively;
\item[(iii)] $Q$ is apparent on $\H$; and
\item[(iv)] The local exponents at $\infty$ are $\pm\kappa_\infty$.
\end{enumerate}
\end{Definition}

\begin{Theorem}\label{theorem: existence of modular form Q with described local exponents and apparentness}
Given \eqref{equation: datas},
there are modular forms $Q$ of weight $4$ equipped with
\eqref{equation: datas}. Moreover, the number of such $Q$
is at most $\prod^m_{j=1}(2\kappa_j)$.
\end{Theorem}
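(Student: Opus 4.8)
The plan is to transport the problem to the modular curve $X(1)=\P^1$ with coordinate $j$, where it becomes a classical question about a second order Fuchsian equation on $\P^1$ with prescribed singularities and local exponents depending on finitely many accessory parameters; existence is then obtained from Riemann's existence theorem and the bound $\prod 2\kappa_j$ from a degree count. First I would set up the $j$-line dictionary. Normalize the Hauptmodul so that $j=0$ at $\rho$, $j=1728$ at $i$, and $j=\infty$ at the cusp. Since $dj/dz$ is $2\pi i$ times a meromorphic modular form of weight $2$ and the Schwarzian $\{j,z\}$ is a meromorphic modular form of weight $4$, the map $Q\mapsto\widetilde Q:=(Q-\tfrac12\{j,z\})/(dj/dz)^2$ is an affine--linear bijection from meromorphic modular forms of weight $4$ onto rational functions of $j$, and under $y=(dj/dz)^{-1/2}v$ the equation $y''=Qy$ becomes a second order Fuchsian equation $v''=\widetilde Q\,v$ on $\P^1$. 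A modular form $Q$ equipped with \eqref{equation: datas} corresponds to such an equation with singular set $\{0,1728,\infty,t_1,\dots,t_m\}$, $t_j:=j(z_j)$, whose local exponents at these $m+3$ points are pinned down by the data (one computes the exponent shifts $\kappa_\rho\mapsto\tfrac13\kappa_\rho$ at $0$, $\kappa_i\mapsto\tfrac12\kappa_i$ at $1728$, and the shift at $\infty$, coming from the ramification of $j$ and the cusp width). In partial fractions the double--pole coefficients of $\widetilde Q$ are fixed by these exponents, while the simple--pole coefficients satisfy exactly two linear relations forcing $\widetilde Q$ to be Fuchsian at $\infty$ with the prescribed exponents there; this leaves an affine space $\mathcal A\cong\A^m$ of \emph{accessory parameters} $\mu_1,\dots,\mu_m$, one attached to each $t_j$.

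Next I would dispose of apparentness at $\rho$ and $i$. The exponent differences of $v''=\widetilde Qv$ at $j=0$ and $j=1728$ turn out to be $\tfrac23\kappa_\rho$ and $\kappa_i$, which by the standing hypotheses ($\kappa_\rho\in\tfrac12\N$ with $\tfrac23\kappa_\rho\notin\N$, and $\kappa_i\in\tfrac12\N\setminus\N$) are not positive integers; hence $v''=\widetilde Qv$ has no logarithmic solution at $0$ or $1728$ for \emph{any} value of $\mu_1,\dots,\mu_m$, with semisimple local monodromy of distinct eigenvalues there. Pulling this back through the $3$--fold (resp.\ $2$--fold) cover $z\mapsto j(z)$ together with the scalar monodromy factor of $(dj/dz)^{-1/2}$, one finds that $y''=Qy$ has local monodromy $\pm I$ at $\rho$ and at $i$; that is, $Q$ is automatically apparent at $\rho$ and $i$ (cf.\ item~(4)). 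So the only conditions left are apparentness of $y''=Qy$ at $z_1,\dots,z_m$, equivalently of $v''=\widetilde Qv$ at $t_1,\dots,t_m$, where the exponent differences $2\kappa_j$ are genuine positive integers. By Frobenius's method, absence of a logarithm at $t_j$ is a single polynomial equation $\Phi_j(\mu_1,\dots,\mu_m)=0$, and the key structural input is the claim that $\Phi_j$ has degree $2\kappa_j$ in $\mu_j$ with nonzero constant leading coefficient, and more generally that the system $\Phi_1=\dots=\Phi_m=0$ has no solution ``at infinity'' in $\P^m$.

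Granting this, the theorem follows. For existence I would first build a representation $\varrho$ of the free group $\pi_1(\P^1\setminus\{0,1728,\infty\})$ into $\SL(2,\C)$ with the prescribed (distinct, hence semisimple) eigenvalues at loops around $0$ and $1728$ and with monodromy around $\infty$ whose eigenvalues match the exponent data there (taken unipotent if these coincide, which is harmless since $\infty$ need not be apparent); such a $\varrho$ exists because three non-central conjugacy classes in $\SL(2,\C)$ can always be arranged to have product in a prescribed class. Regarding $\varrho$ as a representation of $\pi_1(\P^1\setminus\{0,1728,\infty,t_1,\dots,t_m\})$ trivial around each $t_j$, Riemann's existence theorem realizes it as the monodromy of a second order Fuchsian equation, and the standard operations of inserting apparent singularities at the $t_j$ and normalizing the exponents at $0,1728,\infty$ (pinned down up to integer shifts by the eigenvalues) put it in our family; translating back produces a $Q$ equipped with \eqref{equation: datas}, so the common zero locus of the $\Phi_j$ is nonempty. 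It is also finite: a $Q$ equipped with the data is determined by its projective monodromy representation, since the Schwarzian $\{h,z\}=-2Q$ of the ratio $h$ of solutions is M\"obius invariant, and representations of $\pi_1(\P^1\setminus\{0,1728,\infty\})$ into $\mathrm{PSL}(2,\C)$ with two generators in fixed conjugacy classes and fixed product class form a finite set. Finally the degree statement for the $\Phi_j$ gives, by successive elimination of $\mu_1,\dots,\mu_m$ (or a multihomogeneous B\'ezout estimate once there are no zeros at infinity), that the number of common solutions, hence of such $Q$, is at most $\prod_{j=1}^m 2\kappa_j$.

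The main obstacle is the structural analysis of the apparentness polynomials $\Phi_j$ in the accessory parameters: establishing that $\Phi_j$ has degree exactly $2\kappa_j$ in $\mu_j$ with a nonzero constant leading coefficient, and that no common solution of $\Phi_1=\dots=\Phi_m=0$ escapes to infinity, is precisely what upgrades the soft finiteness into the sharp bound $\prod 2\kappa_j$. A secondary technical point is to make the reduction of the first paragraph honest at the three special points — checking that the exponents of $\widetilde Q$ at $0,1728,\infty$ are exactly the shifted data and that inserting apparent singularities at the $t_j$ does not perturb them, so that the constructed equation really corresponds to a weight $4$ modular form $Q$.
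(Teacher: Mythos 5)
Your overall strategy for the bound --- reduce to an $m$-dimensional affine space of accessory parameters, observe that apparentness at each $z_j$ is one polynomial condition $\Phi_j=0$ of degree $2\kappa_j$, and invoke B\'ezout --- is exactly the paper's (the paper works directly with modular forms, writing $Q$ as $Q_3(z;r,s,t)$ plus explicit terms $\bigl(r_1^{(j)}E_4^4F_j+r_2^{(j)}E_4^7\bigr)/F_j^2$ with $F_j=E_6^2-t_jE_4^3$, rather than passing to the $j$-line, but this is cosmetic). The problem is that you explicitly defer the one fact that makes the argument work: that the top-degree form of $\Phi_j$ is a \emph{nonzero pure power} $B_0\mu_j^{2\kappa_j}$, so that the system together with the linear relation coming from $\kappa_\infty$ has no common zero at infinity. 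You name this as ``the main obstacle'' and grant it; the paper proves it by running the Frobenius recursion $n(n-2\kappa_j)d_n=\sum_{k<n}d_kA_{n-k-2}$ and checking inductively that the degree-$n$ part of $d_n$ is $b_{-1}^n\mu_j^n/\bigl((1-2\kappa_j)\cdots(n-1-2\kappa_j)\bigr)$ with $b_{-1}\neq0$, because only the residue term $A_{-1}=a_{-1}+\mu_jb_{-1}$ contributes a full degree at each step. Without this, neither the sharp bound nor (see below) existence is established, so this is a genuine gap rather than a routine omission.

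Your existence argument is the second gap, and it is both flawed and unnecessary. Riemann's existence theorem (as used elsewhere in the paper) classifies branched coverings via permutation representations; it does not realize an $\SL(2,\C)$-representation as the monodromy of a \emph{scalar} second-order Fuchsian equation, and the ``standard operations of inserting apparent singularities at the $t_j$'' with prescribed locations \emph{and} prescribed exponent differences $2\kappa_j$, while keeping the monodromy and the exponents at $0,1728,\infty$ fixed, is precisely the assertion that the system $\Phi_1=\cdots=\Phi_m=0$ has a solution --- so this route is circular. (Your auxiliary finiteness remark is also unsound: a second-order equation is not determined by its projective monodromy representation --- indeed the theorem allows up to $\prod 2\kappa_j$ distinct $Q$ with the same local data and hence the same conjugacy classes of generators.) The paper avoids all of this: once the leading forms are pure powers $B_0\mu_j^{2\kappa_j}$, there is no common zero on the hyperplane at infinity, so B\'ezout gives \emph{exactly} $\prod_{j=1}^m 2\kappa_j>0$ common zeros counted with multiplicity in $\P^m$, which yields existence and the upper bound simultaneously. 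If you replace your third paragraph by this observation and actually carry out the leading-term computation, your proof closes up and coincides with the paper's.
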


To prove the theorem, we first show that there is a finite set of
polynomials such that the set of $Q(z)$ equipped with \eqref{equation:
  datas} is in a one-to-one correspondence with the set of common
zeros of the polynomial. Then the theorem follows immediately from the
clasical B\'ezout theorem. Note that Eremenko and Tarasov
\cite[Theorem 2.4]{Eremenko-Tarasov} has proved a stronger result,
which in our setting states that
for generic singular points $z_1,\ldots,z_m$, the number of $Q(z)$ is
precisely $\prod_{j=1}^m(2\kappa_j)$.

If the local exponents at $\infty$ are $\pm n/4$, $n$ is odd, then our second result asserts that there is a modular form of weight $-4$ coming from the equation. In the following, we use $T=\SM 1101$ and $S=\SM 0{-1}10$.

\begin{Theorem}\label{theorem: 1.2}
Suppose that ($\mb H_1$)  and ($\mb H_2$) hold and $\kappa_\infty=n/4$, $n$ a positive odd
integer. Then there is a constant $c\in\C$ such that
$F(z):=y_-(z)^2+cy_+(z)^2$ satisfies
$$
\left(F\big|_{-2}T\right)(z)=\left(F\big|_{-2}S\right)(z)=-F(z),
$$
where
$$
y_\pm(z)=q^{\pm n/4}\left(1+\sum_{j\geq 1}c_j^\pm q^j\right)
$$
are solutions of \eqref{(1.1)}.
\end{Theorem}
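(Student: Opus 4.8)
The plan is to pass to the symmetric square of $\mathcal L$ and exploit the fact that the oddness of $n$ forces the weight $-2$ monodromy to collapse to a representation of $S_3$, inside which the character demanded by the theorem must then occur. First I would pin down the solutions at $\infty$: since $\kappa_\infty=n/4$ with $n$ odd, the local exponents $\pm n/4$ of $\mathcal L$ at $\infty$ differ by $n/2\notin\Z$, so there is no logarithmic solution there and $y_\pm(z)=q^{\pm n/4}(1+\sum_{j\ge1}c_j^\pm q^j)$ are the unique Frobenius solutions. Reading off $q$-expansions gives $y_\pm\big|_{-1}T=e^{\pm\pi i n/2}y_\pm$, and hence, as $n$ is odd, in weight $-2$
\[
y_+^2\big|_{-2}T=-y_+^2,\qquad y_-^2\big|_{-2}T=-y_-^2,\qquad (y_+y_-)\big|_{-2}T=y_+y_-.
\]

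Next I would form the symmetric-square representation. Since $\mathcal L$ is apparent, its monodromy around every pole of $Q$ in $\H$ is $\pm I$, so $y_+^2,\ y_+y_-,\ y_-^2$ are single-valued and meromorphic on $\H$; they are linearly independent and span the solution space $V$ of $u'''=4Qu'+2Q'u$. By Bol's identity $y\big|_{-1}\gamma$ again solves $\mathcal L$, so $y^2\big|_{-2}\gamma=(y\big|_{-1}\gamma)^2$ lies in $V$; thus $V$ is stable under the weight $-2$ slash operators and affords a $3$-dimensional representation $\sigma$ of $\SL(2,\Z)$ (with $\gamma$ acting by $f\mapsto f\big|_{-2}\gamma^{-1}$), which factors through $\mathrm{PSL}(2,\Z)$ because $\big|_{-2}(-I)=\mathrm{id}$. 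By the displayed formulas, in the basis $(y_+^2,y_+y_-,y_-^2)$ we have $\sigma(T)=\mathrm{diag}(-1,1,-1)$; in particular $\sigma(T)^2=I$, $\tr\sigma(T)=-1$, and the $(-1)$-eigenspace of $\sigma(T)$ is $\langle y_+^2,y_-^2\rangle$, while $\sigma(S)^2=\sigma(S^2)=\sigma(-I)=I$. Writing $U=ST$, so $U^3=-I$ and $\overline S\,\overline U=\overline T$ in $\mathrm{PSL}(2,\Z)$, we get $\sigma(U)^3=I$ and $(\sigma(\overline S)\sigma(\overline U))^2=\sigma(\overline T)^2=I$, so $\sigma(\overline S),\sigma(\overline U)$ satisfy the relations $s^2=u^3=(su)^2=1$ of $S_3$; hence $\sigma$ factors through a quotient of $S_3$, and in particular its image is finite.

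The heart of the proof is then a short character count. Since $\tr\sigma(T)=-1\ne3$, $T$ maps to a nonidentity element, necessarily (being an involution) to the class of a transposition; writing the $S_3$-representation $\sigma$ as $a\cdot\mathbf 1\oplus b\cdot\mathrm{sgn}\oplus c\cdot\mathrm{std}$, evaluating characters at a transposition gives $a-b=\tr\sigma(T)=-1$, whence $b=a+1\ge1$ and $\mathrm{sgn}$ is a constituent of $\sigma$. Its pull-back to $\SL(2,\Z)$ is a character $\varepsilon$ with $\varepsilon(T)=-1$ and $\varepsilon(S)=-1$ — the latter since $S$ too maps to a transposition (otherwise $\sigma(S)=I$ and $\sigma(T)=\sigma(U)$ would have order dividing $3$, contradicting its eigenvalue $-1$) — so $\varepsilon=\chi^3$, with $\chi$ as in Theorem~\ref{theorem: 1.1}. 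Hence there is a nonzero $F\in V$ with $F\big|_{-2}\gamma=\varepsilon(\gamma)F$ for all $\gamma$; in particular $F\big|_{-2}T=F\big|_{-2}S=-F$.

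It remains to put $F$ in the stated form. From $F\big|_{-2}T=-F$, the form $F$ lies in the $(-1)$-eigenspace $\langle y_+^2,y_-^2\rangle$ of $\sigma(T)$, so $F=\alpha y_+^2+\beta y_-^2$; if $\beta\ne0$ I rescale to $F=y_-^2+cy_+^2$. If $\beta=0$, then $y_+^2\big|_{-2}S=-y_+^2$, so writing the solution $y_+\big|_{-1}S$ of $\mathcal L$ as $\alpha'y_++\beta'y_-$ and squaring forces $\beta'=0$; then $y_+$ is a common eigenvector of the weight $-1$ actions of $S$ and $T$, so $\sigma(S)$ and $\sigma(T)$ are simultaneously upper-triangular in the basis $(y_+^2,y_+y_-,y_-^2)$, and a finite group of upper-triangular matrices is diagonalizable, so $\sigma$ has abelian image; then $\sigma$ is a sum of characters, each trivial or $\chi^3$ (because $\sigma(T)^2=I$), and matching the eigenvalues $(-1,1,-1)$ of $\sigma(T)$ forces two copies of $\chi^3$, so $y_-^2$ itself has character $\chi^3$ and $c=0$ works — and the case $\alpha=0$ is symmetric. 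The decisive and least obvious step is this collapse to $S_3$: it is precisely the oddness of $n$ that forces $\sigma(T)^2=I$ and thereby turns the otherwise possibly-infinite image $\sigma(\mathrm{PSL}(2,\Z))$ into a quotient of $S_3$; everything afterward is the character count above or routine linear algebra, and the single-valuedness of the squared solutions used beforehand is exactly what apparentness in $(\mathbf H_1)$ provides.
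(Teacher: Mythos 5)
Your argument is correct, but it takes a genuinely different route from the paper's. The paper stays with the two-dimensional weight $-1$ Bol representation: from $n$ odd it gets $\rho(T)=\pm\SM i00{-i}$, uses $\rho(R)^3=\pm I$ to rule out $\rho(S)=\pm I$ and force $\tr\rho(S)=0$, rescales $y_+$ so that $\rho(S)=\pm\SM abb{-a}$ with $a^2+b^2=-1$, and then verifies the transformation law of $F=r^2y_+^2+y_-^2$ by direct substitution. You instead pass to the three-dimensional weight $-2$ symmetric square $\sigma$, use the oddness of $n$ to get $\sigma(T)^2=I$ and hence collapse the image to a quotient of $S_3$, and produce the required form by a character count followed by the eigenspace argument that pins down the shape $y_-^2+cy_+^2$. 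Both proofs run on the same fuel (the eigenvalues $\pm i$ of $\rho(T)$ and the relations $S^2=R^3=-I$), and your sign character $\varepsilon=\chi^3$ is exactly the character by which the paper's $F$ transforms. What your version buys is that it never needs to normalize $\rho(S)$: the paper's rescaling step silently requires both off-diagonal entries of $\rho(S)$ to be nonzero (true here, since $\tr\rho(R)=\pm1$ gives $a=\mp i/2$ and then $bc=-a^2-1=-3/4\neq0$, though the paper does not say so), and your $\beta=0$ case covers precisely the triangular configuration that would otherwise be left over. What the paper's version buys is brevity and explicit control of the entries $a,b$ and of the constant $c=r^2$, which the surrounding unitarity discussion and Corollary~\ref{corollary: 1.3} exploit. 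One cosmetic point: your justification of $\varepsilon(S)=-1$ is roundabout; the image of $\overline S$ in the abstract group $S_3$ cut out by the presentation is a nonidentity involution, hence a transposition, so $\operatorname{sgn}$ takes the value $-1$ on it with no further argument about $\sigma(S)$.
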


The constant $c$ is rational if all coefficients of $Q(z)/\pi^2$ in
the $q$-expansion are rational.
We conjecture $c$ is positive, but it is not proved yet.
Obviously, $F(z)^2$ is a modular form of weight $-4$ with respect to
$\SL(2,\Z)$. Let $\Gamma_2$ be the group generated by $T^2=\SM 1201$
and $ST=\SM0{-1}11$, which is an index $2$ subgroup of
$\SL(2,\Z)$. Then $F$ is a modular form of weight $-2$ on
$\Gamma_2$. This fact can help us to compute $c$ and $F(z)$
explicitly. For example, if $Q(z)=-\pi^2n^2E_4(z)/4$, then $F(z)$ is
holomorphic on $\H$, but with a pole of order $n$ at $\infty$
($\Gamma_2$ has only one cusp $\infty$ and two elliptic points of
order $3$). Thus it is not difficult to prove

\begin{Corollary}\label{corollary: 1.3}
Let $Q(z)=-\pi^2(n/2)^2E_4(z)$, where $n$ is a positive odd
  integer. Then there is a polynomial
$P_{n-1}(x)\in\Q[x]$ of degree $(n-1)/2$ such that
$$
F(z)=\frac{E_4(z)}{\Delta(z)^{1/2}}P_{n-1}(j(z)).
$$
Here $E_4$ and $E_6$ are the Eisenstein series of weight $4$ and $6$ on $\SL(2,\Z)$ respectively:
\begin{align*}
&E_4(z)=1+240\sum_{m=1}^\infty\frac{m^3q^m}{1-q^m}
=1+240\sum^\infty_{m=1}\left(\sum_{d|n}d^3\right)q^n, \quad q=e^{2\pi iz},\\
&E_6(z)=1-504\sum_{m=1}^\infty\frac{m^5q^m}{1-q^m}
=1-504\sum^\infty_{m=1}\left(\sum_{d|n}d^5\right)q^n,
\end{align*}
$\Delta(z)=(E_4(z)^3-E_6(z)^2)/1728=q-24q^2+\cdots$, and
$j(z)=E_4(z)^3/\Delta(z)$. 
\end{Corollary}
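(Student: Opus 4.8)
The plan is to combine Theorem \ref{theorem: 1.2} with a structure-theoretic analysis of modular forms of weight $-2$ on $\Gamma_2$. We start from the function $F(z)=y_-(z)^2+cy_+(z)^2$ furnished by Theorem \ref{theorem: 1.2} for the specific choice $Q(z)=-\pi^2(n/2)^2E_4(z)$. Since $S$ and $T$ act on $F$ by the sign $-1$ in the weight $-2$ slash action, $F$ is a meromorphic modular form of weight $-2$ on $\SL(2,\Z)$ with the quadratic character $\chi_0$ satisfying $\chi_0(S)=\chi_0(T)=-1$; equivalently, $F$ transforms trivially under the index-$2$ subgroup $\Gamma_2=\langle T^2,ST\rangle$, so $F$ is an honest (meromorphic) modular form of weight $-2$ on $\Gamma_2$. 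The first substantive step is to pin down where $F$ can have poles and zeros. For this particular $Q$ the only singularity of $\mathcal L$ on $\H^\ast$ is at $\infty$, so $y_\pm(z)=q^{\pm n/4}(1+O(q))$ are holomorphic and nonvanishing on all of $\H$; hence $F$ is holomorphic on $\H$, and its only pole is at the cusp $\infty$, where from the $q$-expansion $F(z)=q^{-n/2}(1+\cdots)$ (using that $y_-$ dominates) it has a pole of order $n/2$ in the $q$-variable — i.e. order $n$ in the local parameter $q^{1/2}$ appropriate to $\Gamma_2$, whose unique cusp is $\infty$ with width $2$.

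Next I would invoke the structure of the graded ring of meromorphic modular forms on $\SL(2,\Z)$: any meromorphic modular form of weight $-2$ and character $\chi_0$ that is holomorphic on $\H$ can be written as $E_4/\Delta^{1/2}$ times a rational function of $j$, because $\Delta^{1/2}$ is a weight-$6$ form on $\Gamma_2$ with character $\chi_0^{\,?}$ vanishing only (to order $1/2$) at $\infty$, $E_4$ is the weight-$4$ form vanishing only at $\rho$, and $j$ generates the function field. More precisely, $F\cdot\Delta^{1/2}/E_4$ is a modular function on $\SL(2,\Z)$ (the character contributions cancel, and one checks integrality of the order at $\rho$ using $\kappa_\rho$ from $E_4$'s zero), holomorphic on $\H$, hence a polynomial in $j$. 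Writing $F=(E_4/\Delta^{1/2})\,P(j)$, the degree of $P$ is then dictated by the pole order of $F$ at $\infty$: since $E_4/\Delta^{1/2}=q^{-1/2}(1+\cdots)$ and $j=q^{-1}(1+\cdots)$, matching $F=q^{-n/2}(1+\cdots)$ forces $\deg P=(n-1)/2$. Rationality of the coefficients of $P$ follows from the remark after Theorem \ref{theorem: 1.2}: the $q$-expansion of $Q(z)/\pi^2=-(n/2)^2E_4(z)$ is rational, so $c\in\Q$, the $c_j^\pm$ are rational, hence $F$ has a rational $q$-expansion, and equating $q$-coefficients in $F=(E_4/\Delta^{1/2})P(j)$ determines $P\in\Q[x]$ recursively.

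The main obstacle is the bookkeeping at the elliptic points, $\rho$ and $i$: one must verify that $F\cdot\Delta^{1/2}/E_4$ really is holomorphic there as a function on the $j$-line, i.e. that the naive weight count does not produce a fractional or negative order once the $\Gamma_2$-structure is taken into account. At $i$ (elliptic order $2$, not an elliptic point of $\Gamma_2$ since $S\notin\Gamma_2$... — actually $i$ is regular on $X(\Gamma_2)$) there is nothing to check; at $\rho$ (order $3$) the factor $E_4$ has a simple zero, $\Delta^{1/2}$ is nonzero, and $F$ is holomorphic and generically nonzero, so $F\Delta^{1/2}/E_4$ has a pole of order $1$ in the $\Gamma_2$-uniformizer at $\rho$ — which becomes order $1/3$ in the $j$-coordinate, apparently non-integral. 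Resolving this requires observing that $F$ itself must vanish to the right fractional order at $\rho$ forced by the weight $-2$ and character $\chi_0$ (a weight $-2$ form with this character necessarily has order $\equiv 2\cdot\frac{1}{?}\pmod 1$ at $\rho$), so the orders conspire to make the quotient a genuine polynomial in $j$ with no pole on $\H$. Once this local computation is in hand — it is the only place real care is needed — the corollary follows by assembling the pieces: holomorphy on $\H$, a pole of exact order $n$ at the unique cusp, rationality, and the dimension/degree count giving $\deg P_{n-1}=(n-1)/2$.
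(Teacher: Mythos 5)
Your overall route is the one the paper intends: $F$ is a meromorphic modular form of weight $-2$ on $\Gamma_2$ (equivalently, of weight $-2$ on $\SL(2,\Z)$ with the quadratic character $\chi_0(T)=\chi_0(S)=-1$), holomorphic on $\H$ because $Q=-\pi^2(n/2)^2E_4$ has no singularity there, with a pole of order $n$ at the unique width-$2$ cusp of $\Gamma_2$; one then divides by $E_4/\Delta^{1/2}$, which has exactly the same weight, character, and cuspidal behaviour up to $q^{-1/2}$, and identifies the quotient as a polynomial in $j$ of degree $(n-1)/2$, with rationality coming from the remark after Theorem \ref{theorem: 1.2}. (Your parenthetical claim that $y_\pm$ are \emph{nonvanishing} on $\H$ is neither needed nor justified; only holomorphy of $y_\pm$ is used.)

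The one substantive step, however, you leave literally unfinished: the order of vanishing of $F$ at $\rho$, where your text contains a placeholder ``$2\cdot\frac1?$''. This is the only point where the argument could fail, so it must be closed. Two ways to do it, both already available in the paper. First, the generator $\gamma=ST^{-1}=\SM0{-1}1{-1}$ of the stabilizer of $\rho$ lies in $\Gamma_2=\ker\chi_0$, so $F(\gamma z)=(z-1)^{-2}F(z)$, and the argument of Corollary \ref{corollary: expansion of f at i} (applied with $k=-2$) shows that in the expansion $F=(1-w_\rho)^{-2}\sum b_mw_\rho^m$ one has $b_m=0$ unless $m\equiv1\pmod 3$; since $F$ is holomorphic at $\rho$, it therefore vanishes there to order at least $1$ in $w_\rho$, exactly cancelling the simple zero of $E_4$, so $F\Delta^{1/2}/E_4$ is holomorphic at $\rho$. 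Alternatively, and more in the spirit of your own remark: once you know $F\Delta^{1/2}/E_4$ is a single-valued $\SL(2,\Z)$-invariant meromorphic function, its order at $\rho$ in the variable $w_\rho$ must be divisible by $3$ (it descends to $X(1)$, whose local coordinate at $\rho$ is $w_\rho^3$); since that order is a priori $\ge-1$, it is $\ge0$. The ``apparently non-integral order $1/3$'' you flag as an obstacle is in fact the proof that the pole cannot occur. With either observation inserted, your proof is complete and agrees with the paper's (sketched) argument.
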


For small $n$, $P_{n-1}$ are shown in the
following list.
$$ \extrarowheight3pt
\begin{array}{c|ll} \hline\hline
  n & F & P_{n-1} \\ \hline
  1 & y_-^2+3(2^3y_+)^2 & 1 \\
  3 & y_-^2+3(2^{12}y_+)^2 & j-1536 \\
  5 & y_-^2+3(2^{18}7^1y_+)^2 & j^2-2240j+1146880 \\
  7 & y_-^2+3(2^{28}3^1y_+)^2 &
   j^3-3072j^2+2752512j-704643072\\
  9 & (7y_-)^2+3(2^{34}11^113^1y_+)^2 &
   \begin{split}
   49j^4&-192192j^3+253034496j^2-\\
   &-125954949120j+19346680184832
\end{split}\\
  \hline\hline
\end{array}
$$

In practice, it seems not easy to verify the apparentness at a singular point with local exponents $1/2\pm\kappa$, $\kappa\in\frac{1}{2}\N$.
Take a simple example
\begin{equation*}
\left(q\frac{d}{dq}\right)^2y=-\frac{1}{4\pi^2}y''=\left(\frac{n}{2}\right)^2E_4(z)y\quad \text{on}\ \H.
\end{equation*}
The local exponents at $\infty$ are $\pm n/2$. The standard
method to verify the apparentness at $\infty$ is to show that there
is a solution $y_-(z)$ having a $q$-expansion
$$
y_-(z)=q^{-n/2}\left(1+\sum_{j\geq 1}c_jq^j\right).
$$

Suppose $E_4(z)=\sum^\infty_{j\geq 0}b_jq^j$. Substituting the
$q$-expansion of $y_-$ and $E_4$ into the equation,
then the coefficient $c_j$ satisfies
\begin{equation}\label{1.7}
\left(\left(j-\frac{n}{2}\right)^2-\left(\frac{n}{2}\right)^2\right)c_j=\left(\frac{n}{2}\right)^2\sum_{k+\ell=j,\ \ell<j}b_kc_\ell.
\end{equation}
For $j=1,2,\ldots,n-1$, $c_j$ can be determined from $c_0=1$. However at
$j=n$, the LHS of \eqref{1.7} vanishes. Therefore, $\infty$ is apparent if and only
the RHS of \eqref{1.7} is $0$ at $j=n$. If $n$ is small, then it is
easy to check that the RHS of \eqref{1.7} is not $0$ at $j=n$. For a
general $n$, nevertheless, it seems not easy to see why it does not
vanish from the recursive relation \eqref{1.7}. Thus for a modular
ODE, the standard method is not efficient for this purpose. We need other ideas.

We consider
\begin{equation}\label{equation: DE 3}
y''(z)=\pi^2\left(rE_4(z)+s\frac{E_6(z)^2}{E_4(z)^2}+t\frac{E_4(z)^4}{E_6(z)^2}\right)y(z),
\end{equation}
where $r,s$ and $t$ are constant parameters. For simplicity, we denote
the potential of \eqref{equation: DE 3} by
$Q_3(z;r,s,t)$ or $Q_3(z)$ for short.
Modulo $\SL(2,\Z)$,
\eqref{equation: DE 3} has singularities only at $\rho$ and $i$ (recall that $E_4(z_0)=0$ if and only if $z_0$ is equivalent
to $\rho$ under $\SL(2,\Z)$ and $E_6(z_0)=0$ if and only if $z_0$ is
equivalent to $i$).
Assume the local exponents of \eqref{equation: DE 3} are
$1/2\pm\kappa_i$ at $i=\sqrt{-1}$ and $1/2\pm\kappa_\rho$ at
$\rho=(1+\sqrt{-3})/2$. Then it is easy to prove that $s=s_{\kappa_\rho}$, $t=t_{\kappa_i}$, where
\begin{equation}\label{equation: parameter s,t}
s_{\kappa_\rho}=\frac{1-4\kappa^2_\rho}{9},\qquad \text{and}\qquad t_{\kappa_i}=\frac{1-4\kappa^2_i}{4}.
\end{equation}
See Section 3 for the computation.

At $\infty$, the local exponents are $\pm\kappa_\infty$ if and only if
\begin{equation*}\label{1.9}
r+s_{\kappa_\rho}+t_{\kappa_i}=-(2\kappa_\infty)^2.
\end{equation*} 
In the following, we set the triple $\left(n_i,\ n_\rho,\ n_\infty\right)$ by
$$
(n_i,n_\rho,n_\infty)=\left(\kappa_i,\frac{2\kappa_\rho}{3},2\kappa_\infty\right).
$$

\begin{Theorem}\label{theorem: necessary and sufficient conditions for appartness at infinity}
The modular differential equation \eqref{equation: DE
  3} is apparent throughout $\H\cup\{\text{cusps}\}$ if and only if
the triple $(n_i,n_\rho, n_\infty)$ are positive integers satisfying
(i) the sum of these three integers is odd, and (ii) the sum of any
two of these integers is greater than the third.
Moreover, In such a case, the ratio of any two solutions is a
modular function on $\SL(2,\Z)$.
\end{Theorem}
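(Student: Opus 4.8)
The plan is to analyze the differential equation \eqref{equation: DE 3} by passing to the Schwarzian/triangle-group picture on the $j$-line, where $\SL(2,\Z)\backslash\H^\ast$ becomes a thrice-punctured sphere with marked points $j=0$ (image of $\rho$), $j=1728$ (image of $i$), and $j=\infty$, and the local exponent differences there are $2\kappa_\rho/3 = n_\rho$, $\kappa_i=n_i$, and $2\kappa_\infty=n_\infty$ respectively (using item (4) of the introduction to translate exponents into angles, together with the orders $e_\rho=3$, $e_i=2$, $e_\infty=\infty$). Since by hypothesis there are no other singular points, $Q_3$ is exactly of the form \eqref{equation: DE 3} with $s=s_{\kappa_\rho}$, $t=t_{\kappa_i}$, and apparentness at $\rho$, $i$, $\infty$ is equivalent to asking that the hypergeometric-type projective monodromy be trivial, i.e.\ that all three local monodromies be $\pm I$ and that the global monodromy representation $\rho$ from \eqref{equation: representation of weight -1} land in $\{\pm I\}$ up to conjugacy — equivalently the Bol representation is trivial in $\mathrm{PSL}(2,\C)$. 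For the local monodromy at each puncture to be $\pm I$ one needs the exponent differences $n_i,n_\rho,n_\infty$ to be integers (this forces the $\kappa$'s into the stated half-integer cosets), which gives the first necessary condition.

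Next I would use the explicit hypergeometric description: the ratio $h=y_2/y_1$ is, after the change of variable $z\mapsto x=1728/j(z)$ (or $x=E_6^2/E_4^3$, whichever normalization is cleanest), the Schwarz triangle map for a triangle with angles $\pi n_i$, $\pi n_\rho$, $\pi n_\infty$ at the three vertices. The classical criterion (Schwarz's list / the theory of the hypergeometric monodromy group) says the projective monodromy group is trivial — so $h$ is single-valued, hence a rational function of $j$, hence a modular function on $\SL(2,\Z)$ — precisely when the triangle degenerates to a sphere covering, i.e.\ when $n_i,n_\rho,n_\infty$ are positive integers whose sum is odd and which satisfy all three triangle-type inequalities $n_i< n_\rho+n_\infty$, etc. The parity condition arises because the product of the three local monodromy matrices (each $\pm I$ on the projective level but with a definite sign as a lift to $\mathrm{SL}$) must be $I$; tracking those signs through the relation $M_0 M_{1728}M_\infty = I$ shows the number of $-I$ factors must be even, which after accounting for the shift by $1/2$ in the exponents $1/2\pm\kappa$ translates into $n_i+n_\rho+n_\infty$ being odd. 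The triangle inequalities are exactly the condition that the associated spherical metric (equivalently, by Eremenko-type existence, a genuine cover $\P^1\to\P^1$ branched over three points) exists with the prescribed branching — in covering-space terms, that a permutation representation of the free group on two generators with cycle types matching $n_i,n_\rho,n_\infty$ and transitive image exists, which by Riemann–Hurwitz/Edmonds is possible iff the inequalities hold.

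For the converse direction — given the arithmetic conditions, produce the apparent $Q_3$ and show the ratio is a modular function — I would invoke Theorem~\ref{theorem: existence of modular form Q with described local exponents and apparentness} (with $m=0$: the only poles allowed are $\rho$ and $i$) to get existence of an apparent $Q$ with the prescribed local data; uniqueness within the family \eqref{equation: DE 3} follows since once $s,t$ are pinned by \eqref{equation: parameter s,t} and $r$ by the relation $r+s_{\kappa_\rho}+t_{\kappa_i}=-(2\kappa_\infty)^2$, the equation is determined. Apparentness at $\rho$ and $i$ comes for free from item (4) of the introduction once $\kappa_\rho,\kappa_i\in\frac12\N$; the real content is apparentness at the cusp $\infty$, and here the triangle-inequality/parity conditions are what make the logarithm-free local solution $y_-=q^{-\kappa_\infty}(1+\cdots)$ actually terminate correctly — this is the modular analogue of the obstruction discussed after Corollary~\ref{corollary: 1.3}. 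Finally, once all three local monodromies are $\pm I$ and the projective monodromy group is generated by them with the sign relation forcing triviality, $h(z)$ descends to a single-valued meromorphic function on the genus-zero curve $\SL(2,\Z)\backslash\H^\ast$, hence is a rational function of $j$, i.e.\ a modular function.

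The main obstacle, as I see it, is making the parity condition rigorous: it is easy to state heuristically via "an even number of $-I$'s in $M_0M_{1728}M_\infty=I$", but one must be careful about which lift to $\mathrm{SL}(2,\C)$ is meant, since the exponents are $1/2\pm\kappa$ rather than $\pm\kappa$ and the half-integer shift interacts with the sign of each local monodromy matrix. I expect the cleanest route is to track the monodromy of $y$ itself (not just the ratio $h$) around a small loop at each cusp and around the global relation, using the normalization $q^{\pm\kappa_\infty}$ at $\infty$ and the analogous fractional-power local forms at $\rho,i$; the product of the three eigenvalue-phase contributions, which must be trivial, is exactly $e^{\pi i(n_i+n_\rho+n_\infty)}\cdot(\pm1)=1$, forcing the sum to be odd. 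The triangle inequalities, by contrast, I would get essentially for free by citing the Schwarz-triangle classification or the Eremenko existence theorem already available in the paper, so the parity bookkeeping is the delicate point.
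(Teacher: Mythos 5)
Your overall picture (pass to the $j$-line, interpret apparentness as triviality of the projective monodromy, get the parity from the product relation and the inequalities from a branched-cover existence statement) is broadly the same geometry the paper uses, but two of the steps you lean on are not available, and they sit at the heart of the sufficiency direction. First, the claim that ``apparentness at $\rho$ and $i$ comes for free from item (4) of the introduction once $\kappa_\rho,\kappa_i\in\frac12\N$'' is false: item (4) is a statement about $Q$'s that are \emph{already realized by a metric}, which is essentially what is to be proved. In the situation of this theorem one has $n_\rho=2\kappa_\rho/3\in\N$ and $n_i=\kappa_i\in\N$, which is precisely the case where Theorems \ref{theorem: 1.5}(b) and \ref{theorem: 1.7}(b) show that apparentness at $\rho$ (resp.\ at $i$) is \emph{not} automatic but forces $r$ to be a root of a polynomial of degree $n_\rho$ (resp.\ $n_i$). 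Second, you cannot invoke Theorem \ref{theorem: existence of modular form Q with described local exponents and apparentness} with $m=0$ here: the data \eqref{equation: datas} explicitly assume $2\kappa_\rho/3\notin\N$ and $\kappa_i\notin\N$, i.e.\ they exclude exactly the integrality you need. So your construction of an apparent $Q_3$ with the prescribed $(n_i,n_\rho,n_\infty)$ has no source. The paper instead builds the cover $H:\P^1\to\P^1$ by exhibiting explicit permutations $\sigma_1=(1,\dots,n_i)$, $\sigma_2=(d,\dots,d-n_\rho+1)$ in $S_d$, $d=(n_i+n_\rho+n_\infty-1)/2$, whose product is an $n_\infty$-cycle, sets $h(z)=H(j(z))$, and then \emph{computes} the Schwarzian $\{h(z),z\}$ to verify it equals $-2\pi^2\bigl(rE_4+sE_6^2/E_4^2+tE_4^4/E_6^2\bigr)$ with the parameters \eqref{equation: apparent r s t}; that Schwarzian computation is what ties the abstract cover to the specific equation \eqref{equation: DE 3}, and it is missing from your outline.

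For the necessity direction your plan is closer to workable but still has two unproved steps. The equivalence ``apparent everywhere $\Longleftrightarrow$ Bol representation projectively trivial'' is not a tautology: apparentness at interior points only controls loops around those points, and apparentness at the cusp only gives $\rho(T)=\pm I$; one then needs that the normal closure of $T$ in $\SL(2,\Z)$ is all of $\SL(2,\Z)$ (the paper uses that $T$ and $STS^{-1}$ generate) to conclude $h$ is a modular function. And the parity bookkeeping you flag as the delicate point is genuinely delicate for the reason you suspect (the orbifold exponent shifts $1/6$, $1/4$, $1/12$ at $j=0,1728,\infty$ enter the product of the scalar monodromies); the paper sidesteps it entirely by applying Riemann--Hurwitz to the degree-$d$ cover $h:X_0(1)\to\P^1(\C)$ ramified only at $\rho$, $i$, $\infty$, which gives $n_i+n_\rho+n_\infty=2d+1$ (hence odd) and the triangle inequalities at once from ``ramification index $\le$ degree''. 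I would recommend replacing your sign-tracking with that argument.
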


For example, if 
$$
Q(z)=\pi^2\left(\frac{23}{36}E_4(z)-\frac{9n^2-1}{9}\frac{E_6(z)^2}{E_4(z)^2}-\frac{3}{4}\frac{E_4(z)^4}{E_6(z)^2}\right),\quad n\in\N,
$$
then we have $(n_i,n_\rho,n_\infty)=(1,n,n)$. By Theorem
\ref{theorem: necessary and sufficient conditions for appartness at
  infinity},
\eqref{equation: DE 3} is apparent throughout
$\H\cup\{\text{cusps}\}$. On the other hand, $\infty$
is not apparent for the ODE
\begin{equation*}\label{equation: DE 4}
y''(z)=-\pi^2n^2E_4(z)y(z).
\end{equation*}

As discussed in \eqref{1.7}, it seems very difficult to verify ($\mb H_1$). So
we would like to present some examples to show how to verify the
condition ($\mb H_1$). The first example is
\begin{equation}\label{equation: DE in introduction}
y''(z)=\pi^2\left(rE_4(z)+s\frac{E_6(z)^2}{E_4(z)^2}\right)y(z),
\end{equation}
where $r,s$ are constant parameters.
For simplicity, we denote the potential of \eqref{equation: DE in
  introduction} by $Q_1(z;r,s)$ or $Q_1(z)$ for short. The singular
points modulo $\SL(2,\Z)$ is $\rho$ only.
If the local exponents are $1/2\pm \kappa_\rho$, then a simple calculation in Section 3 shows $s=s_{\kappa_\rho}$, where $s_{\kappa_\rho}$ is given in \eqref{equation: parameter s,t}.

\begin{Theorem}\label{theorem: 1.5}
  Let $\kappa_\rho\in\frac12\mathbb N$.
  \begin{enumerate}
    \item[(a)] Assume $3\nmid 2\kappa_\rho$. Then $Q_1(z;r,s)$ is apparent if $s=s_{\kappa_\rho}$ and any $r\in\C$.

      \item[(b)] Assume $3|2\kappa_\rho$.
      Then there exists a polynomial $P(x)\in\Q[x]$ of degree
      $2\kappa_\rho/3$ such that $Q_1(z;r,s)$ with 
      $s=s_{\kappa_\rho}$ is apparent if and only if $r$ is a
      root of $P(x)$. Moreover, $r$ satisfies
      \begin{equation}\label{equation: r+s=-(l+1/2)^2}
        r+s_{\kappa_\rho}=-\left(\ell+\frac{1}{2}\right)^2,
      \quad \text{where}\ \ell=0,1,2,\ldots,\frac{2\kappa_\rho}{3}-1.
      \end{equation}
  \end{enumerate}
\end{Theorem}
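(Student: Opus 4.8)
The plan is to reduce the apparentness of $Q_1(z;r,s_{\kappa_\rho})$ on $\H$ to the absence of logarithmic solutions at $j=0$ of a Riemann $P$-equation on the $j$-line $\P^1$, and then to extract the polynomial $P$ and its roots by a single Frobenius recursion. To begin, since $E_4$ has a simple zero at $\rho$ while $E_6(\rho)\ne0$, the potential $Q_1(z;r,s_{\kappa_\rho})=\pi^2(rE_4+s_{\kappa_\rho}E_6^2/E_4^2)$ has double poles exactly along the single $\SL(2,\Z)$-orbit of $\rho$, and by Bol's identity \eqref{equation: DE in introduction} is equivariant under $\SL(2,\Z)$ for the weight $-1$ slash action. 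Hence \eqref{equation: DE in introduction} is apparent on $\H$ if and only if it has no logarithmic solution at $z=\rho$; there, because $s=s_{\kappa_\rho}$ (see \eqref{equation: parameter s,t}), the local exponents are $\tfrac12\pm\kappa_\rho$, so the exponent difference is $2\kappa_\rho\in\Z_{>0}$.

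I would then descend to the orbifold $\SL(2,\Z)\backslash\H^\ast\cong\P^1$, uniformized by $j$ with $j(\rho)=0$, $j(i)=1728$, $j(\infty)=\infty$; the quotient map is branched of order $3$ at $\rho$, of order $2$ at $i$, and has the cusp at $\infty$. Substituting $y(z)=j'(z)^{-1/2}\,Y(j(z))$ into \eqref{equation: DE in introduction} produces a second-order equation $\frac{d^2Y}{dj^2}=\widetilde Q(j)\,Y$ in which $\widetilde Q=(Q_1+\tfrac12\{j,z\})/(j')^2$ (with $\{j,z\}$ the Schwarzian derivative of $j$ with respect to $z$) is a weight-zero modular function, hence a rational function of $j$ with double poles only at $0,1728,\infty$; thus $Y$ solves a Riemann $P$-equation with those three singular points. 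Tracking exponents through the substitution and the branching, the exponent differences are $2\kappa_\rho/3$ at $j=0$ (the difference $2\kappa_\rho$ upstairs, divided by the ramification index $3$), $1/2$ at $j=1728$ (the exponents $0,1$ of \eqref{equation: DE in introduction} at the ordinary point $i$, divided by $2$), and $2\kappa_\infty$ at $j=\infty$, where $(2\kappa_\infty)^2=-(r+s_{\kappa_\rho})$ by the indicial equation at the cusp. The key lemma is that \eqref{equation: DE in introduction} is apparent at $\rho$ if and only if this Riemann $P$-equation is free of logarithms at $j=0$: in a local chart at $\rho$ linearizing the order-$3$ stabilizer, $j$ is a convergent power series in the cube of the coordinate, so a Frobenius basis $Y_\pm=t^{\beta_\pm}(1+O(t))$ at $j=0$ pulls back to solutions of \eqref{equation: DE in introduction} equal to $(z-\rho)^{1/2\pm\kappa_\rho}$ times a convergent power series, with no logarithm; conversely a genuine $\log j$ pulls back to a genuine logarithm, since $\log j=3\log(z-\rho)+O(1)$.

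Granting the lemma, part (a) is immediate: if $3\nmid2\kappa_\rho$, then $2\kappa_\rho/3\notin\Z$, so the Riemann $P$-equation has two power-series solutions at $j=0$, for every $r$. For part (b), set $N=2\kappa_\rho/3\in\Z_{>0}$ and normalize the equation to hypergeometric form with $c=N+1$ (so the exponents at $j=0$ are $0$ and $-N$); the exponent difference $\tfrac12$ at $1728$ forces $a+b\in\{N+\tfrac12,\,N+\tfrac32\}$ and the exponent difference $2\kappa_\infty$ at $\infty$ gives $a-b=\pm2\kappa_\infty$, so that $a,b$ --- hence each factor $(a-k)(b-k)=k^2-(a+b)k+ab$ --- depend on $r$ only through $\kappa_\infty^2$, and linearly. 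Running the Frobenius recursion for the exponent $-N$, the obstruction to the absence of logarithms is the vanishing, at step $k=N$, of a product that collapses (up to a nonzero constant) to $\prod_{k=1}^{N}(a-k)(b-k)$; this is a polynomial $P(r)\in\Q[r]$ of degree exactly $N=2\kappa_\rho/3$, and \eqref{equation: DE in introduction} is apparent if and only if $P(r)=0$. Finally, $(a-k)(b-k)=0$ is equivalent to $r+s_{\kappa_\rho}=-(2\kappa_\infty)^2=-(N+\tfrac12-2k)^2$, and as $k$ runs over $1,\dots,N$ the numbers $\abs{N+\tfrac12-2k}$ run over $\tfrac12,\tfrac32,\dots,N-\tfrac12$; this is exactly \eqref{equation: r+s=-(l+1/2)^2} with $\ell=0,1,\dots,2\kappa_\rho/3-1$.

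The main obstacle is the descent lemma, that is, making the passage across the order-$3$ branch point at $\rho$ rigorous. One must work in a chart in which the stabilizer acts linearly, so that $j$ is genuinely a power series in the cube of the coordinate, and then track precisely the exponent shift contributed by the conformal factor $j'(z)^{-1/2}$, so that logarithms upstairs and downstairs correspond exactly; pinning down the Riemann scheme --- in particular the exponent difference $1/2$ at $j=1728$, which reflects that $i$ is an ordinary point of \eqref{equation: DE in introduction} but an order-$2$ orbifold point of the quotient --- is part of the same bookkeeping. Once this is settled, the Frobenius recursion is routine. It is also the step where modularity of $Q_1$ enters decisively: descending to the $j$-line is exactly what turns the naive degree-$2\kappa_\rho$ obstruction upstairs into one of degree $2\kappa_\rho/3$.
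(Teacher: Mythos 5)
Your proposal is correct, and for the heart of part (b) it takes a genuinely different route from the paper. For part (a) and for the existence and degree of $P$, the paper stays on $\H$ and runs a Frobenius recursion in the coordinate $w_\rho=(z-\rho)/(z-\overline\rho)$, using the fact (Corollary \ref{corollary: expansion of f at i}) that the coefficients of a weight-$k$ form at $\rho$ vanish unless $n+k/2\equiv 0\pmod 3$; this forces $c_n=0$ for $3\nmid n$, so the single obstruction at $n=2\kappa_\rho$ vanishes identically when $3\nmid 2\kappa_\rho$ and is a polynomial of degree $2\kappa_\rho/3$ in $r$ otherwise. That is a local avatar of your global descent: the mod-$3$ vanishing is precisely the statement that everything is a function of $w_\rho^3$, i.e., descends through the order-$3$ orbifold point, so your descent lemma plays the role of their Proposition \ref{proposition: same exponents} together with Corollary \ref{corollary: expansion of f at i}. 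Where the arguments really diverge is the root formula \eqref{equation: r+s=-(l+1/2)^2}: the paper obtains it by exhibiting, via Riemann's existence theorem, $2\kappa_\rho/3$ coverings $h:X(\Gamma_2)\to\P^1(\C)$ with prescribed ramification whose Schwarzians realize the apparent potentials (and gives a second proof via Eremenko's metric-existence theorem), whereas you identify the descended equation as the hypergeometric equation with $c=N+1$, $|c-a-b|=1/2$, $|a-b|=2\kappa_\infty$ --- with three singular points there are no accessory parameters, so $r$ enters only through $ab$, linearly --- and read off the roots from the classical log-free criterion $\prod_{k=1}^N(a-k)(b-k)=0$. Your route is more elementary and delivers the degree and the roots in a single computation; the trade-off is that it is specific to the three-singularity situation (with more singular points the descended equation acquires accessory parameters and the hypergeometric reduction is unavailable), which is presumably why the paper develops the covering and metric machinery that it reuses for Theorems \ref{theorem: necessary and sufficient conditions for appartness at infinity} and \ref{theorem: 1.7}. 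The step you flag as delicate --- that logarithms at $j=0$ correspond exactly to logarithms at $\rho$ because $j=w_\rho^3\cdot(\text{unit})$ and the factor $j'(z)^{-1/2}$ only shifts exponents (by $-1$ at $\rho$, not the naive division by $3$, though only the exponent differences matter after normalization) --- is sound as sketched, so I see no gap.
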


Next, we consider the ODE
\begin{equation}\label{equation: DE 2 in introduction}
y''(z)=\pi^2\left(rE_4(z)+
t\frac{E_4(z)^4}{E_6(z)^2}
\right) y(z)\quad \text{on}\ \H,
\end{equation}
where $r$ and $t$ are constant parameters. For simplicity, the
potential of \eqref{equation: DE 2 in introduction} is denoted by
$Q_2(z;r,t)$ or $Q_2(z)$ for short. Similar to \eqref{equation: DE in
  introduction},
\eqref{equation: DE 2 in introduction} has local
exponents $1/2\pm \kappa_i$ at $i$ if and
only if $
t=t_{\kappa_i},
$
where $t_{\kappa_i}$ is given in \eqref{equation: parameter s,t}.

\begin{Theorem}\label{theorem: 1.7}
  Let $\kappa_i\in\frac12\mathbb N$.
\begin{enumerate}
\item[(a)] Assume $\kappa_i\in\frac{1}{2}+\Z_{\geq 0}$. Then
  \eqref{equation: DE 2 in introduction} 
   is apparent if and only if $t=t_{\kappa_i}$ and any
  $r\in\C$.
\item[(b)]
  Assume $\kappa_i\in\N$. Then there exists a polynomial
  $P(x)\in\Q[x]$ of degree $\kappa_i$ such that \eqref{equation: DE 2 in
    introduction} with $t=t_{\kappa_i}$ is apparent if and only
  if $r$ is a root of $P(x)$. Moreover, $r$ satisfies
  \begin{equation} \label{equation: r+t=-(l+1/3)^2}
  r+t_{\kappa_i}=-\left(\ell\pm\frac{1}{3}\right)^2, \quad\begin{cases} \ell=0,2,4,\ldots,\kappa_i-1,\quad&\text{if}\ \kappa_i\ \text{is\ odd},\\
  \ell=1,3,5,\ldots,\kappa_i-1,\qquad&\text{if}\ \kappa_i\ \text{is\ even}.\
  \end{cases}
  \end{equation}
\end{enumerate}
\end{Theorem}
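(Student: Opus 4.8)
The plan is to prove Theorem~\ref{theorem: 1.7} in the same way as Theorem~\ref{theorem: 1.5}, with $\rho$ and $E_4$ replaced by $i$ and $E_6$. First I would reduce to a statement at $i$: since $E_6$ vanishes exactly on the $\SL(2,\Z)$-orbit of $i$, the potential $Q_2(z;r,t_{\kappa_i})$ is holomorphic on $\H$ off that orbit (and everywhere when $\kappa_i=\tfrac12$, where $t_{\kappa_i}=0$), so by weight-$4$ modularity \eqref{equation: DE 2 in introduction} is apparent on $\H$ iff it has no logarithmic solution at $i$; I will also use, from the discussion preceding the theorem, that the local exponents being $1/2\pm\kappa_i$ forces $t=t_{\kappa_i}$ and that evaluating $Q_2$ at the cusp gives $r+t_{\kappa_i}=-(2\kappa_\infty)^2$. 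Then I would pull \eqref{equation: DE 2 in introduction} back along the Hauptmodul $j$: putting $j'=dj/dz$ and $y=(j')^{-1/2}Y$ removes the first-order term and yields $Y''=\tilde Q(j)Y$ with $\tilde Q=(Q_2+\tfrac12\{j,z\})/(j')^2$, where $\{j,z\}$ is the Schwarzian of $j$. As $Q_2$, $\{j,z\}$ and $(j')^2$ are all weight-$4$ meromorphic modular forms, $\tilde Q$ is a modular function, hence a rational function of $j$ with poles only at $j=0,1728,\infty$; so $Y''=\tilde Q Y$ is a hypergeometric (Riemann) equation, and computing its indicial equations --- using $j-1728\sim c_i(z-i)^2$, $j\sim c_\rho(z-\rho)^3$, $j\sim q^{-1}$, and $r+t_{\kappa_i}=-(2\kappa_\infty)^2$ --- gives exponent pairs
$$
\{1/3,\,2/3\}\ \text{at}\ j=0,\qquad
\{(1-\kappa_i)/2,\,(1+\kappa_i)/2\}\ \text{at}\ j=1728,\qquad
\{-1/2-\kappa_\infty,\,-1/2+\kappa_\infty\}\ \text{at}\ j=\infty,
$$
whose sum is $1$. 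Since $(j')^{-1/2}$ is a nonzero fractional power of $z-i$ times a unit, \eqref{equation: DE 2 in introduction} has a logarithmic solution at $i$ exactly when $Y''=\tilde Q Y$ has one at $j=1728$.

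For part (a), $\kappa_i\in\tfrac12+\Z_{\ge0}$, so the exponent difference $\kappa_i$ at $j=1728$ is not an integer and there is no logarithmic solution there for any $r$; hence \eqref{equation: DE 2 in introduction} is apparent. (Alternatively, one can argue locally at $i$: from $Q_2(-1/z)=z^4Q_2(z)$ the map $y\mapsto y\big|_{-1}S$ preserves the local solution space at $i$ and squares to $-\mathrm{id}$, and comparing its action on the larger- and smaller-exponent solutions forces the $\log$-coefficient to vanish whenever $2\kappa_i$ is odd.)

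For part (b), $\kappa_i\in\N$, so $j=1728$ is a resonant point with integer exponent difference $\kappa_i$. After the further normalization $Y\mapsto (j/1728)^{-1/3}(1-j/1728)^{-(1-\kappa_i)/2}Y$ the equation is in standard hypergeometric form with lower parameter $2/3$, exponents $0$ and $\kappa_i$ at $j=1728$, and exponents $a=\tfrac13-\tfrac{\kappa_i}{2}-\kappa_\infty$ and $b=\tfrac13-\tfrac{\kappa_i}{2}+\kappa_\infty$ at $j=\infty$. By the classical local theory of ${}_2F_1$, there is no logarithmic solution at the resonant point iff the relevant local series terminates, i.e. iff $a$ or $b$ lies in $\{0,-1,\dots,-(\kappa_i-1)\}$. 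Solving for $\kappa_\infty$, this holds for exactly $\kappa_i$ distinct positive values, each of the form $2\kappa_\infty=\ell\pm\tfrac13$ with $\ell$ running over $\{0,2,\dots,\kappa_i-1\}$ when $\kappa_i$ is odd and over $\{1,3,\dots,\kappa_i-1\}$ when $\kappa_i$ is even (the two signs agreeing only at $\ell=0$); via $r=-(2\kappa_\infty)^2-t_{\kappa_i}$ these are $\kappa_i$ pairwise distinct rational values of $r$. Taking $P(x)$ to be the monic polynomial with exactly these roots gives $P\in\Q[x]$ of degree $\kappa_i$, and its roots are the ones recorded in \eqref{equation: r+t=-(l+1/3)^2}.

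The main obstacle is the computation in the first paragraph: one must push the local exponents correctly through the ramified cover $z\mapsto j$ and the normalization $y=(j')^{-1/2}Y$, so that the Schwarzian term contributes the right amount at the order-$2$ and order-$3$ elliptic points and at the cusp and the Riemann scheme comes out as claimed. Once that is in hand, the remainder is the standard bookkeeping for hypergeometric equations, the only mild subtlety being the $\ell=0$ term in part (b).
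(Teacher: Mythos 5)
Your argument is correct, and I checked the key computations: the Riemann scheme of the pulled-back equation is as you claim (exponents $\{1/3,2/3\}$, $\{(1\pm\kappa_i)/2\}$, $\{-1/2\pm\kappa_\infty\}$, summing to $1$), and the Frobenius recursion at $j=1728$ shows the obstruction to log-freeness is exactly $(a)_{\kappa_i}(b)_{\kappa_i}=\prod_{n=0}^{\kappa_i-1}\bigl((n+\tfrac13-\tfrac{\kappa_i}2)^2-\kappa_\infty^2\bigr)$, which is itself a degree-$\kappa_i$ polynomial in $r$ with rational coefficients via $\kappa_\infty^2=-(r+t_{\kappa_i})/4$, and whose roots are precisely the values in \eqref{equation: r+t=-(l+1/3)^2}. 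However, your route is genuinely different from the paper's. The paper never passes to the $j$-line: part (a) and the existence and degree of $P$ are obtained by a Frobenius expansion in the modular local variable $w_i=(z-i)/(z+i)$, where a parity constraint on the coefficients of $E_4^4/E_6^2$ (coming from $S$ fixing $i$) kills the resonance when $2\kappa_i$ is odd — this is essentially your parenthetical remark, made precise; the explicit list \eqref{equation: r+t=-(l+1/3)^2} is then proved separately, once by constructing, via Riemann's existence theorem, modular functions $h$ on the index-$3$ group $\Gamma_3$ with prescribed ramification so that $-\tfrac12\{h,z\}=Q_2$, and once more via Eremenko's metric theorem. Your hypergeometric reduction is more economical and self-contained: it yields (a), the polynomial $P$ (one may simply take $P=(a)_{\kappa_i}(b)_{\kappa_i}$), and the root formula in a single computation, with no appeal to Riemann's existence theorem or to metrics. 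What the paper's heavier machinery buys is generality and by-products: the $w_i$-expansion method extends to potentials with additional singular points $z_j$ (where the pullback to the $j$-line is no longer hypergeometric), and the covering-map proof produces the developing map $h$ explicitly, which is reused elsewhere (the tables of $h(z)$, and the metric-existence statements). The only points you should spell out when writing this up are the two you already flag — the exponent bookkeeping through the ramified cover and the $\ell=0$ coincidence of signs — plus the pairwise distinctness of the $\kappa_i$ resulting values of $r$, which follows since $(2n+\tfrac23-\kappa_i)^2$, $n=0,\dots,\kappa_i-1$, are squares of an arithmetic progression of step $2$ containing no two opposite terms.
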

We use the Frobenius method to prove Part (a) of Theorem \ref{theorem: 1.5}
and Theorem \ref{theorem: 1.7}. However, due to the modularity, our expansion of
functions are expanded in terms of powers 
of $w_\rho:=(z-\rho)/(z-\bar{\rho})$ and $w_i:=(z-i)/(z+i)$, not
powers of $z-\rho$ and $z-i$ as the 
standard method does. This kind of expansion has been used in \cite{Shimura-Maass}  and \cite{Zagier123}.
We will see in Section 3 that this type of expansions not only
simplifies computations greatly, but also obtains the degree of $P(x)$
in Theorem \ref{theorem: 1.5}(b) and Theorem \ref{theorem:
    1.7}(b) precisely.

We will present two proofs of \eqref{equation: r+s=-(l+1/2)^2}
in Theorem \ref{theorem: 1.5}(b) and \eqref{equation: r+t=-(l+1/3)^2} in Theorem \ref{theorem: 1.7}(b) in Section 4 and
Section 5. One is to apply Riemann's existence
theorem on compact Riemann surfaces. The other is to apply the
existence theorems of the invariant metrics with curvature $1/2$. This
geometric theorems are obtained by Eremenko
\cite{Eremenko,Eremenko2}. Hopefully, these methods are useful for
treating this kind of problems in modular differential equations.

The paper is organized as follows. In Section 2, we will discuss the
connection between the invariant metric $ds^2=e^u\abs{dz}^2$ of
curvature $1/2$ and modular ODEs, in particular, the relation among
the behavior of $u$ near cusps, angles and the local exponents of the
realized modular ODE by $u$. In Section 3, we will explain the
expansion of modular forms in terms of the natural coordinate
$w=(z-z_0)/(z-\bar{z}_0)$, and prove Theorem \ref{theorem: 1.5}(a) and
Theorem \ref{theorem: 1.7}(a). Both Theorem \ref{theorem: 1.5}(b) and Theorem
\ref{theorem: 1.7}(b) are proved in Section 4, and Theorem \ref{theorem: necessary and sufficient conditions for appartness at infinity}
is proved in Section 5. Finally, we will prove Theorem \ref{theorem: 1.1}
and Theorem \ref{theorem: 1.2} to complete the paper in Section 6 and Section 7 respectively.

\section{Curvature equations and the modular ODEs}
\subsection*{2.1.}
Let $M$ be a compact Riemann surface, $p\in M$, and $z$ be a complex
coordinate in an open neighborhood $U$ of $p$ with $z(p)=0$. We
consider the following curvature equation:
\begin{equation}\label{2.1}
4u_{z\bar{z}}+e^u=f\quad \text{on}\ U,
\end{equation}
where $f=4\pi\sum\alpha_i\delta_{p_i}$ is a sum of Dirac measures and
$0\neq\alpha_i>-1$. The assumption $\alpha_i>-1$ ensures that $e^u$ is
locally integrable in a neighborhood of $p_i$. The $L^1$-integrability
implies
\begin{equation}\label{equation: behavior of u near p}
u(z)=2\alpha_i\log\abs{z-p_i}+O(1)\quad \text{near}\ p_i.
\end{equation}
This is a general result from the elliptic PDE theory, see
\cite{Chen-Lin-dc, Chen-Lin-cpa}.

Let $w=w(z)$ be a coordinate change and set
\begin{equation}\label{equation: transformation of u}
\hat{u}(w)=u(z)-2\log\left|{\frac{dw}{dz}}\right|.
\end{equation}
Then $\hat{u}(w)$ also satisfies
\begin{equation*}
  4 \hat{u}_{w\bar{w}}+e^{\hat{u}}=\hat f, \qquad
  f=4\pi\sum\alpha_i\delta_{\hat p_i},
\end{equation*}
where $\hat p_i=w(p_i)$. In other words, $e^u\abs{dz}^2$ is invariant
under a coordinate change. Since $u$ has singularities at $p_i$, the
metric $ds^2=e^u\abs{dz}^2$ has a conic singularity at $p_i$. If $u$ is a
solution of \eqref{2.1}, then the metric $ds^2=e^u\abs{dz}^2$ has
curvature $1/2$ at any point $p\not\in\braces{p_i}$. Suppose that $M$ is
covered by $\braces{U_i}$ and $z_i$ is a coordinate in $U_i$. We call
the collection $\braces{u_i}$ to be a solution of \eqref{2.1} on $M$
if $u_i$ is a solution of \eqref{2.1} on $U_i$ for each $i$ and
satisfy the transformation law $u_j=u_i-2\log\left|\frac{dz_j}{dz_i}\right|$
on $U_i\cap U_j$.
 
Let $g$ be a metric of $M$ with the curvature $K$, and
the equation \eqref{2.1} on $M$ is equivalent to the curvature equation:
\begin{equation}\label{2.3}
\Delta_g\hat{u}+e^{\hat{u}}-K=4\pi\sum{\alpha}_i\delta_{p_i}\quad \text{on}\
 M,
\end{equation}where $\Delta_g$ is the Beltrami-Laplace operator of $(M,g)$.
We could normalize the metric $g$ such that the area of $M$ is equal
to $1$. In the case when $g$ has a constant curvature, \eqref{2.3} can
be written as
\begin{equation*}
 \Delta_g\hat{u}+\rho\left(\frac{e^{\hat{u}}}{\int e^{\hat{u}}}-1\right)=4\pi\sum{\alpha}_i(\delta_{p_i}-1)\quad \text{on}\ M.
\end{equation*}
This nonlinear PDE is often call \emph{a mean field equation} in
analysis. See \cite{Chai-Lin-Wang, Chen-Lin-cpa, Chen-Lin-dc, Chen-Lin-spectrum, Chen-Lin-weight2, Chen-Kuo-Lin-simplezero}
and \cite{Lin-green-function, Lin-Wang-elliptic, Lin-Wang-mean-field} for the recent development of mean field equations.

In this paper, we consider the compact Riemann surface that is the
quotient of $\H^*:=\H\cup\Q\cup\braces{\infty}$ by a finite index
subgroup $\Gamma$ of $\SL(2,\Z)$, and the equation \eqref{2.1} is
defined on the upper half space $\H$:
\begin{equation}\label{2.4}
4u_{z\bar{z}}+e^u=4\pi\sum\alpha_i\delta_{p_i}\quad \text{on}\ \H,
\end{equation}
where the RHS is invariant under the action of $\Gamma$, i.e., the set
$\braces{p_i}$ is invariant under the action of $\Gamma$ and
$\alpha_i=\alpha_j$ if $p_i=\gamma\cdot p_j$ for some
$\gamma\in\Gamma$. The transformation law \eqref{equation:
  transformation of u} for coordinate change is equivalent to asking
$u$ to satisfy
\begin{equation}\label{2.5}
u(\gamma z)=u(z)+4\log\abs{cz+d},\quad \forall\gamma=\M abcd\in\Gamma.
\end{equation}
Let $s$ be a cusp of $\Gamma$ and $\gamma\in\SL(2,\Z)$ be a matrix
such that $\gamma\cdot\infty=s$. Then we define $u_\gamma$ by
$$
u_\gamma(z):=u(\gamma\cdot z)-4\log\abs{cz+d}.
$$
Thus, $u$ is required to satisfy the following behavior near $s$: there
is $\alpha_s>0$ such that
\begin{equation}\label{2.6}
 e^{u_\gamma(z)}=\abs{q_N}^{4\alpha_s}(c+o(1)),\quad q_N=e^{2\pi i z/N},\ c>0,
 \end{equation}
where $N$ is the width of the cusp $s$ and
$o(1)\rightarrow 0$ as $q_N\rightarrow 0$. Given the RHS of
\eqref{2.4} and a positive $\alpha_s$ at the cusp $s$, we ask for a
solution $u$ of \eqref{2.4} satisfying \eqref{2.5} and \eqref{2.6} at
any cusp.

The conic angle $\theta$, defined at a singularity $p_i$ or a cusp
$s$, is an important geometric quantity. Suppose that a metric $ds^2$,
conformal to the flat metric $\abs{dz}^2$, has a conic singularity at
$p$, and $w$ is a coordinate near $p$ with $w(p)=0$. If
\begin{equation}\label{equation: ds^2=}
ds^2=\abs w^{2(\theta-1)}(c+o(1))\abs{dw}^2,\quad c>0,
\end{equation}
then we call $\theta$ the \emph{angle} at $p$, and $2\pi\theta$ the \emph{total angle} at $p$.
Since $ds^2$ is
required to have a finite area, the angle $\theta$ is always
\emph{positive}. Note that $ds^2$ is smooth (as a metric) at $p$ if
and only if $\theta=1$.

Next, we want to calculate the angles of $ds^2=e^u\abs {dz}^2$, if $u$
is a solution of \eqref{2.4}. Note that $z$ is not a coordinate of $M$
if $p_i$ is an elliptic point of order $e_i>1$. Indeed,
$w=(z-p_i)^{e_i}$ is the local coordinate near $p_i$. For simplicity,
we denote $z-p_i$ by $z$ ($z(p_i)=0$). By \eqref{equation: behavior of
  u near p}, we have $u(z)=2\alpha_i\log\abs z+O(1)$, i.e.,
$e^{u(z)}=\abs{z}^{2\alpha_i}(c_0+o(1))$, $c_0>0$.
Then
$$
e^{u(z)}\abs{dz}^2=\abs{w}^{(2\alpha_i+2)/e_i-2}(d+o(1))\abs{dw}^2,\quad d>0.
$$
By \eqref{equation: ds^2=}, we have
\begin{equation}\label{2.7}
\theta_i=\frac{\alpha_i+1}{e_i}.
\end{equation}
At a cusp $s$, the coordinate is $q_N=e^{2\pi i z/N}$, where $N$ is
the width of the cusp $s$. By \eqref{2.6},
$$
e^{u_{\gamma}(z)}\abs{dz}^2=\abs{q_N}^{4\alpha_s-2}(c+o(1))\abs{dq_N}^2,\quad c>0.
$$
So the angle $\theta_s$ at $s$ is
\begin{equation}\label{2.8}
 \theta_s=2\alpha_s.
\end{equation}
 
\subsection*{2.2. Integrability and modular differential equations}
Equation \eqref{2.4} is also known as an
integrable system. There are two important features related to the
integrability. One is that
\begin{equation}\label{equation: uzz-uz/2 mero}
Q(z):=-\frac{1}{2}\left( u_{zz}-\frac{1}{2}u^2_z\right)\quad\text{is\ a\ meromorphic\ function},
\end{equation}
because $Q(z)_{\bar{z}}=-\frac{1}{2}(u_{z\bar{z}z}-u_{z\bar{z}}u_z)=0$ by \eqref{2.4}.
\begin{Lemma}\label{lemma: 2.1}
Each $p_i$ is a double pole of $Q(z)$ with the expansion $\frac{\alpha_i}{2}\left(\frac{\alpha_i}{2}+1\right)(z-p_i)^{-2}+O\left((z-p_i)^{-1}\right)$.
\end{Lemma}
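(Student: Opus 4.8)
The plan is to compute the Laurent expansion of $Q(z)$ near a singularity $p_i$ directly from the known leading behaviour of $u$. From \eqref{equation: behavior of u near p} we have $u(z) = 2\alpha_i\log|z-p_i| + O(1)$ near $p_i$. The first step is to sharpen this: since $4u_{z\bar z} + e^u = 4\pi\alpha_i\delta_{p_i}$ near $p_i$ (the only Dirac mass in a small punctured neighborhood) and $e^u \sim |z-p_i|^{2\alpha_i}$ is in $L^1$ there, subtracting the fundamental-solution term shows that $v(z) := u(z) - 2\alpha_i\log|z-p_i| - 2\alpha_i\log|z-p_i|$... more carefully, writing $u(z) = \alpha_i\log|z-p_i|^2 + v(z)$ with $v$ satisfying $4v_{z\bar z} = -e^u$ away from $p_i$, elliptic regularity gives that $v$ is continuous (indeed $C^1$) near $p_i$, so $v_z$ is bounded. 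Hence $u_z(z) = \dfrac{\alpha_i}{z-p_i} + g(z)$ with $g$ holomorphic-part bounded; I should be a little careful because $v$ need not be smooth, but since $Q(z)$ is \emph{already known to be meromorphic} by \eqref{equation: uzz-uz/2 mero}, I only need the expansion to the stated order, and the bounded remainder in $u_z$ suffices once I know $Q$ is meromorphic.

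The second step is the substitution. Set $t = z - p_i$ and write $u_z = \dfrac{\alpha_i}{t} + h(t)$ where $h$ collects the bounded remainder. Then
\begin{equation*}
u_z^2 = \frac{\alpha_i^2}{t^2} + \frac{2\alpha_i h(t)}{t} + h(t)^2,
\qquad
u_{zz} = -\frac{\alpha_i}{t^2} + h'(t),
\end{equation*}
so that
\begin{equation*}
Q(z) = -\frac12\left(u_{zz} - \frac12 u_z^2\right)
= -\frac12\left(-\frac{\alpha_i}{t^2} - \frac{\alpha_i^2}{2t^2} + O(t^{-1})\right)
= \frac{\alpha_i}{2}\left(1 + \frac{\alpha_i}{2}\right)\frac{1}{t^2} + O\!\left(t^{-1}\right),
\end{equation*}
which is exactly the claimed expansion $\frac{\alpha_i}{2}\bigl(\frac{\alpha_i}{2}+1\bigr)(z-p_i)^{-2} + O((z-p_i)^{-1})$. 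Since $Q$ is meromorphic and its principal part starts at order $t^{-2}$ with this nonzero coefficient (nonzero because $\alpha_i \neq 0$ and $\alpha_i > -1$), $p_i$ is indeed a double pole.

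The main obstacle is the regularity step: justifying that the error term in $u_z$ is genuinely $O(1)$ (or at least $o(t^{-1})$ in a suitable averaged sense) rather than merely $o(t^{-1}\log)$ or worse. The cleanest route is to invoke the standard conic-singularity regularity results already cited in the paper (\cite{Chen-Lin-dc, Chen-Lin-cpa}), which give not just \eqref{equation: behavior of u near p} but the refined expansion $u(z) = 2\alpha_i\log|z-p_i| + w(z)$ with $w \in C^{2,\beta}$ near $p_i$ when $\alpha_i$ is not an integer, and a comparable statement with a possible $\log$ term that does not affect the $t^{-2}$ coefficient when it is; alternatively, one may bypass regularity entirely by using that $Q$ is \emph{a priori} meromorphic, so it has \emph{some} Laurent expansion $\sum_{n\ge -m} c_n t^n$, then integrating the relation $Q = -\frac12(u_{zz} - \frac12 u_z^2)$ against the known asymptotics of $u$ forces $m = 2$ and pins down $c_{-2}$. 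I would present the argument in this second form, as it keeps the proof self-contained modulo \eqref{equation: behavior of u near p}.
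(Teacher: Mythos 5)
Your computation is exactly the paper's proof: differentiate the asymptotic $u(z)=2\alpha_i\log|z-p_i|+O(1)$ to get $u_z=\alpha_i(z-p_i)^{-1}+O(1)$ and $u_{zz}=-\alpha_i(z-p_i)^{-2}+O((z-p_i)^{-1})$, then substitute into $Q=-\tfrac12(u_{zz}-\tfrac12 u_z^2)$. The paper states this in two lines without dwelling on the regularity needed to differentiate the $O(1)$ term, so your added justification (via the cited conic-singularity regularity or the a priori meromorphy of $Q$) is merely a more careful version of the same argument.
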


\begin{proof}
Since $u(z)=2\alpha_i\log\abs{z-p_i}+O(1)$ near $p_i$, we have
$u_z(z)=\alpha_i(z-p_i)^{-1}+O(1)$ and
$u_{zz}(z)=-\alpha_i(z-p_i)^{-2}+O\left((z-p_i)^{-1}\right)$. Then the lemma
follows immediately.
\end{proof}

On the other hand, the Liouville theorem asserts that locally any
solution $u$ can be expressed as
\begin{equation}\label{2.10}
u(z)=\log\frac{8\abs{h'(z)}^2}{\left(1+\abs{h(z)}^2\right)^2},
\end{equation}
where $h(z)$ is a meromorphic function. Recall the Schwarz derivative
\begin{equation}\label{3.3}
  \braces{h,z}=\left(\frac{h''}{h'}\right)'
  -\frac{1}{2}\left(\frac{h''}{h'}\right)^2.
\end{equation}
Note that the Schwarz derivative can be used to recover $h$ from $u$. Indeed, a direct computation from \eqref{2.10} yields that
\begin{equation}\label{2.12}
\braces{h,z}=-2Q(z).
\end{equation}
See \cite{Chai-Lin-Wang, Lin-green-function, Lin-Wang-elliptic, Lin-Wang-mean-field} for the detail of the proofs \eqref{2.10}--\eqref{2.12}.
The meromorphic function $h$ is called a \emph{developing map} for the
solution $u$. Any two developing maps $h_i$, $i=1,2$, of $u$ have the
same Schwarz derivative by \eqref{2.12}, thus they
can be connected by a M\"obius transformation,
\begin{equation}\label{2.13}
h_2(z)=\frac{ah_1(z)+b}{ch_1(z)+d},\quad \M abcd\in\SL(2,\C).
\end{equation}
By \eqref{2.10}, we obtain
\begin{equation}\label{3.6}
  \frac{\abs{h'_1(z)}^2}{\left(1+\abs{h_1(z)}^2\right)^2}
  =\frac{\abs{h_2'(z)}^2}{\left(1+\abs{h_2(z)}^2\right)^2},
\end{equation}
which implies that the matrix $\SM abcd$ is an unitary matrix.

Next, we recall the classical Hermite theorem, see \cite{Whittaker-Watson}.

\begin{theorem}
Let $y_i$, $i=1,2$, be  two independent solutions of
$$
y''=Q(z)y.
$$
Then the ratio $h(z)=y_2(z)/y_1(z)$ satisfies $\braces{h,z}=-2Q(z)$.
\end{theorem}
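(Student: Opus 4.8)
The plan is to carry out the classical short computation relating the Wronskian of the pair $(y_1,y_2)$, the logarithmic derivative of their ratio, and the Schwarzian. Since both sides of the asserted identity $\braces{h,z}=-2Q(z)$ are meromorphic on $\H$, it suffices to verify it on any simply connected open set avoiding the poles of $Q$ and the zeros of $y_1$ and $y_2$, where everything in sight is holomorphic; the general case then follows by meromorphic continuation.

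First I would show that the Wronskian $W:=y_1y_2'-y_1'y_2$ is a nonzero constant: differentiating and substituting $y_i''=Q y_i$ gives $W'=y_1y_2''-y_1''y_2=Qy_1y_2-Qy_1y_2=0$, and $W\neq0$ because $y_1$ and $y_2$ are linearly independent. Hence $h'=(y_1y_2'-y_1'y_2)/y_1^2=W/y_1^2$, so $h'$ is nowhere zero on the domain under consideration, and
$$
\frac{h''}{h'}=(\log h')'=-2\,\frac{y_1'}{y_1}.
$$
Differentiating once more and using $y_1''/y_1=Q$ yields
$$
\left(\frac{h''}{h'}\right)'=-2\left(\frac{y_1''}{y_1}-\left(\frac{y_1'}{y_1}\right)^2\right)=-2Q+2\left(\frac{y_1'}{y_1}\right)^2,\qquad \frac12\left(\frac{h''}{h'}\right)^2=2\left(\frac{y_1'}{y_1}\right)^2.
$$

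Subtracting these two expressions, the terms $2(y_1'/y_1)^2$ cancel and I obtain $\braces{h,z}=(h''/h')'-\frac12(h''/h')^2=-2Q(z)$, as claimed. The only point deserving a remark is the behaviour across a zero of $y_1$, where $h$ acquires a pole: there one simply repeats the computation with the roles of $y_1$ and $y_2$ interchanged — equivalently, uses that the Schwarzian is invariant under $h\mapsto 1/h$ — so the identity propagates to all of $\H$. I do not anticipate any genuine obstacle here: the statement is elementary, and once the constancy of $W$ is recorded the argument is a one-line differentiation; this is why it is quoted as a classical fact (see \cite{Whittaker-Watson}).
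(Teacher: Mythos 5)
Your computation is correct and is the standard verification: the constancy of the Wronskian gives $h'=W/y_1^2$, whence $h''/h'=-2y_1'/y_1$ and the Schwarzian collapses to $-2Q$. The paper itself offers no proof of this statement — it is quoted as the classical Hermite theorem with a reference to \cite{Whittaker-Watson} — so your argument simply supplies the omitted classical calculation, and the remarks about meromorphic continuation past the zeros of $y_1$ (via M\"obius invariance of the Schwarzian) are a correct, if routine, way to tidy up the edge cases.
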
 

Let $Q(z)$ be the meromorphic function \eqref{equation: uzz-uz/2 mero}
obtained from the solution $u$. Consider the ODE
\begin{equation}\label{equation: y''=Qy in section 2}
y''=Q(z)y.
\end{equation}
Then \eqref{equation: uzz-uz/2 mero} and the Hermite theorem together imply that $h(z)$ is a ratio of two solutions of \eqref{3.6}.

\begin{Theorem}\label{theorem: 2.1}
Suppose $u$ is a solution of \eqref{2.4}. Then \eqref{equation: y''=Qy in section 2} satisfies ($\mb H_1$) and the followings hold.

\begin{enumerate}
\item[(a)] 
The function $Q(z)$ is a meromorphic modular form of weight $4$ with
respect to $\Gamma$ and holomorphic at any cusp. Moreover, at a cusp
$s$, $Q(s)<0$.
\item[(b)] \eqref{equation: y''=Qy in section 2} is Fuchsian and the local exponents of \eqref{equation: y''=Qy in section 2}
at $p_i$ are $-\alpha_i/2$, $\alpha_i/2+1
$, and $\pm\alpha_s$ at a cusp.
\item[(c)]
If $\alpha_i\in\N$ for all $i$, then \eqref{equation: y''=Qy in section 2} is apparent.

\end{enumerate}

\end{Theorem}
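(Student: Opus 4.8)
The plan is to start from the Liouville representation \eqref{2.10} of the given solution $u$ and extract all the claimed properties from the developing map $h$. First I would set up the framework: by the Hermite theorem combined with \eqref{equation: uzz-uz/2 mero}, any developing map $h$ of $u$ is a ratio $y_2/y_1$ of two independent solutions of \eqref{equation: y''=Qy in section 2}, so understanding the monodromy and local behavior of $h$ is equivalent to understanding that of the ODE. Since two developing maps differ by a M\"obius transformation, and in fact (by \eqref{3.6}) by a \emph{unitary} M\"obius transformation, the monodromy of $h$ around any loop in $\H$ lies in $\mathrm{PSU}(2)$; combined with the behavior forced at the singular points this will give apparentness.

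For part (a), I would check modularity directly. The invariance \eqref{2.5} of $u$ under $\Gamma$, together with the definition \eqref{equation: uzz-uz/2 mero} of $Q$, yields $Q(\gamma z)=(cz+d)^4 Q(z)$ by a routine chain-rule computation (the $\log|cz+d|$ term is killed after applying $u_{zz}-\tfrac12 u_z^2$, since the Schwarzian of a M\"obius map vanishes). Holomorphy at the cusp $s$ and the sign $Q(s)<0$ come from feeding the asymptotic \eqref{2.6} for $e^{u_\gamma}$ into $Q$: writing $u_\gamma(z)=4\alpha_s\log|q_N|+O(1)=-\tfrac{8\pi\alpha_s}{N}\mathrm{Im}\,z+O(1)$ and computing $-\tfrac12(u_{zz}-\tfrac12 u_z^2)$ in the variable $z$ shows $Q_\gamma$ extends holomorphically in $q_N$ with leading value $-(2\pi\alpha_s/N)^2<0$, i.e.\ the local exponents at $s$ are $\pm\alpha_s$ (this also gives the cusp part of (b)). For part (b), Fuchsianness is Lemma \ref{lemma: 2.1}: each $p_i$ is at worst a double pole of $Q$. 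The indicial equation at $p_i$ is $\lambda(\lambda-1)=\tfrac{\alpha_i}{2}(\tfrac{\alpha_i}{2}+1)$ by Lemma \ref{lemma: 2.1}, whose roots are $\lambda=-\alpha_i/2$ and $\lambda=\alpha_i/2+1$, exactly as claimed. Then ($\mb H_1$) is assembled from these pieces: apparentness is part (c), the local exponents at $p_i$ in the form $1/2\pm\kappa_{p_i}$ have $\kappa_{p_i}=(\alpha_i+1)/2$, the condition $\kappa_{p_i}\in\frac12\N$ matches $\alpha_i\in\N$ (the case of interest), $Q(\infty)\le 0$ is in (a), and $\kappa_{p_i}>1/2$ for $p_i\notin\{i,\rho\}$ follows because a genuine pole of $Q$ forces $\alpha_i>0$, i.e.\ $\kappa_{p_i}>1/2$ (for $p_i$ an elliptic point one instead gets the correct statement about $\theta_{p_i}$).

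For part (c), the key point is that when $\alpha_i\in\N$ the two local exponents $-\alpha_i/2$ and $\alpha_i/2+1$ differ by the integer $\alpha_i+1$, so a priori a logarithmic solution could occur; I must rule this out. The clean argument: the developing map $h=y_2/y_1$ is, by \eqref{2.10}, \emph{single-valued and meromorphic} on $\H$ away from the $p_i$, and near $p_i$ the formula $e^u=|z-p_i|^{2\alpha_i}(c_0+o(1))$ with $c_0>0$ forces $\frac{|h'|^2}{(1+|h|^2)^2}=\tfrac18|z-p_i|^{2\alpha_i}(c_0+o(1))$, which is single-valued; hence $h$ itself has at worst an algebraic branch point at $p_i$, and since the local monodromy of $h$ lies in the discrete-modulo-unitary picture it must actually be trivial in $\mathrm{PSL}(2,\C)$ — concretely, $h$ extends meromorphically across $p_i$ with $h'$ vanishing or having a pole of some integer order there. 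A meromorphic $h$ with $\{h,z\}=-2Q$ and only such algebraic behavior means the monodromy of the pair $(y_1,y_2)$ around $p_i$ is a scalar $\pm I$, which is precisely apparentness; the absence of a logarithmic term is automatic once the local monodromy is scalar. I expect the main obstacle to be making this last deduction airtight: one has to argue that single-valuedness of $e^u$ (equivalently of the Hermitian combination of $h$) plus the index structure genuinely forces the log-free expansion, rather than merely a branch point, and the cleanest route is to note that a logarithmic solution would make $|h'|^2/(1+|h|^2)^2$ multivalued near $p_i$, contradicting \eqref{2.10} since $u$ is an honest (single-valued) function on $\H$. Everything else is bookkeeping with the chain rule and the indicial equation.
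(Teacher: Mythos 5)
Your proposal is correct and follows essentially the same route as the paper: modularity of $Q$ via the chain rule applied to \eqref{2.5}, the cusp value $Q(s)=-4\pi^2\alpha_s^2/N^2<0$ from the local behavior of $u$ near the cusp, part (b) from Lemma \ref{lemma: 2.1}, and apparentness in (c) from the fact that a logarithmic solution would make the single-valued quantity $e^u=8|h'|^2/(1+|h|^2)^2$ multivalued near $p_i$. The only cosmetic difference is at the cusp, where you compute $Q$ directly in the $z$-variable from the asymptotics of $u_\gamma$ while the paper passes to the coordinate $q_N$ and uses the Schwarzian chain rule for the developing map; both give the same leading term.
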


\begin{proof}
(a) By the chain rule, we have
\begin{align*}
(u\ccirc\gamma)_z(z)&=u_z(\gamma\cdot z)(cz+d)^{-2},\\
  (u\ccirc\gamma)_{zz}(z)&=u_{zz}(\gamma\cdot z)(cz+d)^{-4}
                           -u_z(\gamma\cdot z)\frac{2c}{(cz+d)^3}.
\end{align*}
Thus
\begin{align*}
  (u\ccirc \gamma)_{zz}-\frac{1}{2}(u\ccirc \gamma)^2_z
  &=\left(u_{zz}(\gamma\cdot z)-\frac{1}{2}u^2_z(\gamma\cdot z)\right)\\
  &\qquad\x(cz+d)^{-4}-u_z(\gamma\cdot z)\cdot\frac{2c}{(cz+d)^3}.
\end{align*}
On the other hand, the transformation law \eqref{2.5} yields
\begin{align*}
(u\ccirc \gamma)_z(z)&=u_z(z)+\frac{2c}{(cz+d)}, \\
(u\ccirc\gamma)_{zz}&=u_{zz}-\frac{2c^2}{(cz+d)^2}.
\end{align*}
Hence, we have
\begin{align*}
(u\ccirc\gamma)_{zz}-\frac{1}{2}(u\ccirc\gamma)^2_z&=\left(u_{zz}-\frac{1}{2}u^2_z\right)-u_z(z)\cdot\frac{2c}{(cz+d)}-\frac{4c^2}{(cz+d)^2}\\
&=\left(u_{zz}-\frac{1}{2}u^2_z\right)-\frac{2c}{(cz+d)}(u\ccirc\gamma)_z(z).
\end{align*}
Since 
$$
\frac{-2c}{(cz+d)}(u\ccirc\gamma)_z=\frac{-2c}{(cz+d)^3}u_z(\gamma\cdot z),
$$
we find that $Q:=-\frac{1}{2}\left(u_{zz}-\frac{1}{2}u^2_z\right)$ satisfies
$$
Q(\gamma\cdot z)=Q(z)\cdot(cz+d)^4.
$$
This proves the modularity of $Q$. 

To prove the holomorphy of $Q$ at cusps, without loss of generality,
we may assume that the cusp $s$ is $\infty$. Then $q_N=e^{2\pi iz/N}$ is
the local coordinate near $\infty$, where $N$ is the width of
the cusp $\infty$. By the transformation law of coordinate changes, the
solution $\hat{u}$ in terms of $q_N$ should be expressed by
$\hat{u}(q_N)=u(z)-2\log\left|\frac{dq_N}{dz}\right|$.
Thus,
$$
e^{\hat{u}(q_N)}=\frac{8\abs{h'(z)}^2}{\left(1+\abs{h(z)}^2\right)^2}
\left|\frac{dq_N}{dz}\right|^2
=8\left|\frac{d}{dq_N}h(z)\right|^2\left(1+\abs{h(z)}^2\right)^{-2}.
$$
Hence the developing map $h(z)=\hat{h}(e^{2\pi iz/N})=\hat{h}(q_N)$, where $q_N=e^{2\pi
  iz/N}$. 
 Note that
\begin{align*}
\braces{h,z}&=\braces{\hat{h},q_N}\left(\frac{dq_N}{dz}\right)^2+\braces{q_N,z}\\
&=\braces{\hat{h},q_N}q^2_N\left(\frac{-4\pi^2}{N^2}\right)+\frac{2\pi^2}{N^2}.
\end{align*}
Since
$$
-\frac{1}{2}\braces{\hat{h},q_N}=\hat{u}_{q_Nq_N}-\frac{1}{2}\hat{u}_{q_N}^2=\frac{\alpha}{2}\left(\frac{\alpha}{2}+1\right)q^{-2}_N+O\left(q^{-1}_N\right),
$$
where $\alpha=\theta-1$, $\theta$ is the angle at $\infty$, we have
\begin{equation*}
  \begin{split}
\lim_{\Im z\rightarrow\infty}Q(z)&=-\frac{\pi^2}{N^2}
\left(1+\frac{4\alpha}{2}\left(\frac{\alpha}{2}+1\right)\right)=-\frac{\pi^2}{N^2}(1+\alpha)^2<0,
\end{split}
\end{equation*}
because $\alpha>-1$. This proves Part (a).

Part (b) is a consequence of Lemma \ref{lemma: 2.1}.

For Part (c), if $\alpha_i\in\N$ then the local exponents $-\alpha_i/2$ and $\alpha_i/2+1$ can be written as
$1/2\pm \kappa_i$, $\kappa_i=(\alpha_i+1)/2\in\frac{1}{2}\N$ and by the Liouville theorem \eqref{2.10}, we see easily that $h(z)$ can not have a logarithmic singularity at $p_i$. The fact that $h(z)$ is a ratio of two solutions of \eqref{equation: y''=Qy in section 2} implies any solution of \eqref{equation: y''=Qy in section 2} has no logarithmic singularity. This proves Part (c).
\end{proof}

Together with the Liouville theorem, we have
\begin{Proposition}\label{proposition: Q can be realized}
Suppose $Q$ is a meromorphic modular form of weight $4$ on $\SL(2,\Z)$. If there are two independent solutions $y_1$ and $y_2$ of \eqref{equation: y''=Qy in section 2} such that $h(z)=y_2(z)/y_1(z)$ satisfies $h(\gamma z)=\frac{a h(z)+b}{ch(z)+d}$ for some unitary matrix $\SM abcd$ depending on $\gamma$, for any $\gamma\in\SL(2,\Z)$, then $Q$ can be realized. 
\end{Proposition}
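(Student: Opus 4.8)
The plan is to reverse the logic of Theorem \ref{theorem: 2.1} and reconstruct a solution $u$ of \eqref{2.4} from the data of the ODE. Starting from the two independent solutions $y_1,y_2$ and their ratio $h=y_2/y_1$, I would set
$$
u(z):=\log\frac{8\abs{h'(z)}^2}{\bigl(1+\abs{h(z)}^2\bigr)^2},
$$
i.e.\ define $u$ by the Liouville formula \eqref{2.10} with $h$ as developing map. The first step is to check that $u$ is a genuine solution of the curvature equation $4u_{z\bar z}+e^u=0$ away from the singular points: this is the content of the Liouville theorem, since $h$ is locally a nonconstant meromorphic function wherever $h'\neq0,\infty$, and one verifies $4u_{z\bar z}+e^u=0$ by the same direct computation that yields \eqref{2.12}. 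At a point $p$ where $h$ has a critical point or pole of order $e$, $h$ looks like $w\mapsto w^{e}$ in suitable local coordinates (after composing with a M\"obius transformation, which does not change $u$ by \eqref{3.6}), so $u(z)=2(e-1)\log\abs{z-p}+O(1)$, matching \eqref{equation: behavior of u near p} with $\alpha_p=e-1\in\N$; hence $u$ solves \eqref{2.4} with the right Dirac masses on $\H$. One also needs $h'\not\equiv0$, i.e.\ $y_1,y_2$ genuinely independent, which is given.

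The second step is the invariance. By the hypothesis, for each $\gamma=\SM abcd\in\SL(2,\Z)$ the analytic continuation of $h$ satisfies $h(\gamma z)=(ah(z)+b)/(ch(z)+d)$ with $\SM abcd$ unitary. Plugging this into the Liouville formula and using \eqref{3.6} — which is precisely the identity expressing that a unitary M\"obius transformation preserves the spherical-type density $\abs{h'}^2/(1+\abs h^2)^2$ — I would compute
$$
e^{u(\gamma z)}=\frac{8\abs{h'(\gamma z)}^2}{\bigl(1+\abs{h(\gamma z)}^2\bigr)^2}
=\frac{8\abs{h'(z)}^2}{\bigl(1+\abs{h(z)}^2\bigr)^2}\cdot\abs{cz+d}^4
=e^{u(z)}\abs{cz+d}^4,
$$
where the middle equality combines the chain rule $h'(\gamma z)=(cz+d)^2 h'(z)/(c h(z)+d)^2$ with \eqref{3.6}. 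Taking logarithms gives exactly the transformation law \eqref{2.5}, so $u$ descends to a solution of \eqref{2.1} on the compact surface $\SL(2,\Z)\backslash\H^\ast$; in particular the singular set $\{p_i\}$ and the masses $\alpha_i=e_{p_i}-1$ are automatically $\SL(2,\Z)$-invariant because the whole density is.

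The third step is the behavior at the cusp $\infty$ (and hence at all cusps, by invariance). Here one uses part (a) of Theorem \ref{theorem: 2.1} in the form already established: $Q$ is holomorphic at $\infty$ with $Q(\infty)<0$, so the ODE \eqref{equation: y''=Qy in section 2} is Fuchsian at $\infty$ with local exponents $\pm\kappa_\infty$, $\kappa_\infty>0$, and $h=\widehat h(q)$ is a ratio of the two Frobenius solutions, so $\widehat h$ extends meromorphically across $q=0$ with a zero or pole of order $2\kappa_\infty>0$ (or, in the logarithmic/half-integer degenerate case, still of the form $q^{2\kappa_\infty}\times$unit after a M\"obius change, since the monodromy at $\infty$ is conjugate to a rotation). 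Feeding this into the Liouville formula gives $e^{u_\gamma(z)}=\abs{q}^{4\alpha_\infty}(c+o(1))$ with $\alpha_\infty=\kappa_\infty>0$, which is \eqref{2.6}. Thus all the defining conditions for ``$Q$ is realized'' hold, and we are done. The main obstacle is the cusp analysis: one must make sure the developing map $h$ really does extend meromorphically (with a positive-order zero/pole) across the cusp in all cases, including when the exponents differ by an integer or a half-integer — this is where one genuinely needs the sign information $Q(\infty)<0$ from Theorem \ref{theorem: 2.1}(a) together with the fact that, for the realized $Q$, the monodromy at $\infty$ is unitary (elliptic/rotation) rather than parabolic, forcing $\widehat h$ to be single-valued there after a suitable M\"obius normalization.
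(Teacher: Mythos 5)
Your proposal is correct and follows essentially the same route as the paper: define $u$ by the Liouville formula with $h$ as developing map, use the unitarity of the M\"obius transformations together with \eqref{3.6} to get the transformation law \eqref{2.5}, and invoke the Liouville theorem for \eqref{2.4}. The only difference is that you add a cusp analysis (where your appeal to Theorem \ref{theorem: 2.1}(a) is slightly circular, since that theorem presupposes a solution $u$), but the paper's own proof does not address the cusp condition either, so this does not affect the comparison.
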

\begin{proof}
Let
$u(z)=\log\frac{8\abs{h'(z)}^2}{\left(1+\abs{h(z)^2}\right)^2}$. Since
$h(z)$ is unitary, $u(z)$ is well-defined on $\H$ and satisfies
\eqref{2.5}. Further, the Liouville theorem says that $u(z)$ satisfies
\eqref{2.4}.
\end{proof}

\subsection*{2.3. Examples}
In this subsection, we will give some examples to indicate how to
determine $Q$ provided that the RHS of \eqref{2.4} is
known and $\alpha_\infty$ is given at $\infty$. Here,
$\Gamma=\SL(2,\Z)$.\\ 

\noindent{\bf Example 1.}
Assume that the RHS of \eqref{2.4} is equal to $0$. Then
$Q:=-\frac{1}{2}\left(u_{zz}-\frac{1}{2}u^2_z\right)$
is a holomorphic modular form of weight $4$. Thus,
\begin{equation}
Q(z)=\pi^2rE_4(z).
\end{equation}
Since $\pm\alpha_\infty$ are the local exponents of \eqref{(1.1)} at
$\infty$, we have
$r=-4\alpha^2_\infty$. Thus, $Q$ is uniquely determined. Note that at
$\infty$, the angle $\theta_\infty$ is equal to $2\alpha_\infty$.\\

\noindent{\bf Example 2.}
Assume that the RHS of \eqref{2.4} is $4\pi n\sum\delta_p$, where the
summation is over $\gamma\cdot\rho$ for every $\gamma\in\Gamma$. Then
$Q$ is a meromorphic modular form of weight $4$ whose poles occur at
$\gamma\cdot\rho$ and the order is $2$. Thus, $E_4(z)^2
Q(z)$ is holomorphic a modular form of wright $12$, and then
$$
Q(z)=\pi^2\left(rE_4(z)+s\frac{E_6(z)^2}{E_4(z)^2}\right),
$$ where we recall that the graded ring of modular forms on $\SL(2,\Z)$ is generated by $E_4(z)$ and $E_6(z)$.
By Theorem \ref{theorem: 2.1}, the local exponents at $\rho$ are $-n/2$ and $n/2+1$, which implies 
$\kappa
_\rho=(n+1)/2$, $s=(1-4\kappa^2_\rho)/9$,
and $-(r+s)/4=\alpha^2_\infty$. Thus $Q$ is uniquely determined. Moreover, the angles $\theta_j$ in this example are $\theta_i=1/2$, $\theta_\rho=(n+1)/3$ and $\theta_\infty=2\alpha_\infty$.\\

 \noindent{\bf Example 3.}
Assume that the RHS of \eqref{2.4} is equal to $4\pi n\sum\delta_p$,
where the summations is over $\gamma\cdot i$ for any
$\gamma\in\Gamma$. Reasoning as Example 2, we have
\begin{equation}
Q(z)=\pi^2\left(rE_4(z)+t\frac{E_4(z)^4}{E_6(z)^2}\right).
\end{equation}
By Theorem \ref{theorem: 2.1}, we have
$$
 \kappa_i=\frac{n+1}{2},\quad t=\frac{1-4\kappa^2_i}{4},
 \quad \text{and}\quad r+t=-4\alpha^2_\infty.
$$
Thus $Q$ is uniquely determined.
Moreover, we have $\theta_i=(n+1)/2$, $\theta_\rho=1/3$, and $\theta_\infty=2\alpha_\infty$.
 \\

 \noindent{\bf Example 4.}
Assume the RHS of \eqref{2.4} is
$4\pi\left(n\sum_{p_1}\delta_{p_1}+m\sum_{p_2}\delta_{p_2}\right)$,
where $p_1,p_2$ run over zeros of $E_4(z)$ and $E_6(z)$,
respectively. Then
 \begin{equation}
Q(z)=\pi^2\left(rE_4(z)+s\frac{E_6(z)^2}{E_4(z)^2}+t\frac{E_4(z)^4}{E_6(z)^2}\right).
\end{equation}
The conditions on the local exponents at $\rho$, $i$ and $\infty$ yield that
\begin{align*}
&s=\frac{1-4\kappa_\rho^2}{9},\quad \kappa_\rho=\frac{n+1}{2};\quad t=\frac{1-4\kappa^2_i}{4},\quad \kappa_i=\frac{m+1}{2};\\
&r+s+t=-4\alpha^2_\infty.
\end{align*}
Then $Q$ is uniquely determined. Moreover, $\theta_1=(m+1)/2$,
$\theta_2=(n+1)/3$ and $\theta_\infty=2\alpha_\infty$.

\subsection*{2.4 Eremenko's theorem}
A. Eremenko \cite{Eremenko, Eremenko2} gave a necessary and sufficient
conditions of the angles $\theta_i$, $1\leq i\leq 3$, at the three
singular points $i,\rho,\infty$ for the existence of $u$ of \eqref{2.4}-\eqref{2.6}.\\

When one of angles is an integer, the following conditions are required.\\

\noindent{\bf (A)} If only one (say $\theta_1$) of angles is an
integer, then either $\theta_2+\theta_3$ or $\abs{\theta_2-\theta_3}$
is an integer $m$ of opposite parity to $\theta_1$ with
$m\leq\theta_1-1$. If all the angles are integers, then (1)
$\theta_1+\theta_2+\theta_3$ is odd, and (2)
$\theta_i<\theta_j+\theta_k$ for $i\neq j\neq k$.\\

\noindent{\bf Eremenko's theorem.}
If one of $\theta_j$ is an integer, then a necessary and sufficient condition for the existence of a conformal metric of 
positive constant
curvature on the sphere with three conic singularities of angles
$\theta_1$, $\theta_2$, $\theta_3$ ($\theta_j\neq 1$, $1\leq j\leq
3$), is that $\braces{\theta_1,\theta_2,\theta_3}$ satisfies
(A). Moreover, if (A) holds and there is only one integral angle, then
the metric is unique.

\section{Expansions of Eisenstein series at $\rho$ and $i$}

The $q$-expansion of a modular form $f(z)$, i.e., the expansion of
$f(z)$ with respect to the local parameter $q$ at the cusp $\infty$,
is frequently studied and is of great significance in many problems in
number theory. Here we shall review properties of series expansions of
modular forms at a point $z_0\in\H$, other than the cusp $\infty$.

\begin{Definition} Let $\Gamma$ be a Fuchsian subgroup of the first
  kind of $\SL(2,\R)$. Let $f(z)$ be a meromorphic modular form of
  weight $k$ on $\Gamma$. Given $z_0\in\H$, let
  $$
  w=w_{z_0}(z)=\frac{z-z_0}{z-\overline z_0}.
  $$
  The expansion of the form
  \begin{equation} \label{equation: power series expansion}
  f(z)=(1-w)^k\sum_{n\ge n_0}\frac{b_n}{n!}w^n
  \end{equation}
  is called the \emph{power series expansion} of $f$ at $z_0$.
\end{Definition}

One advantage of this expansion is that its coefficients $b_n$ have a
simple expression in terms of the Shimura-Maass derivatives of $f$. To
state the result, we recall that if $f:\H\to\C$ is said to be \emph{nearly
  holomorphic} if it is of the form
$$
f(z)=\sum_{d=0}^n\frac{f_d(z)}{(z-\overline z)^d}
$$
for some holomorphic functions $f_d$. If $k$ is an integer and
$f:\H\to\C$ is a nearly holomorphic function such that
$$
f\left(\frac{az+b}{cz+d}\right)=(cz+d)^kf(z)
$$
for all $\SM abcd\in\Gamma$ and each $f_d$ is holomorphic at every
cusp, then we say $f$ is a \emph{nearly holomorphic modular form} of
weight $k$ on $\Gamma$.

For a nearly holomorphic function $f:\H\to\C$, we define its
\emph{Shimura-Maass} derivative of weight $k$ by
$$
(\partial_kf)(z):=\frac1{2\pi i}\left(f'(z)+\frac{kf(z)}{z-\overline
    z}\right).
$$
We have the following important properties of Shimura-Maass
derivatives.

\begin{Lemma}[{\cite[Equations (1.5) and (1.8)]{Shimura-Maass}}]
\label{lemma: Maass}
For any nearly holomorphic functions $f,g:\H\to\C$, any integers $k$
and $\ell$, and any $\gamma\in\GL^+(2,\R)$, we have
$$
  \partial_{k+\ell}(fg)=(\partial_k f)g+f(\partial_\ell g)
$$
and
$$
  \partial_k\left(f\big|_k\gamma\right)=(\partial_k f)\big|_{k+2}\gamma.
$$
\end{Lemma}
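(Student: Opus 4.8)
The statement to prove is Lemma~\ref{lemma: Maass}, the two basic identities for Shimura--Maass derivatives: the Leibniz rule $\partial_{k+\ell}(fg)=(\partial_kf)g+f(\partial_\ell g)$ and the covariance $\partial_k(f|_k\gamma)=(\partial_kf)|_{k+2}\gamma$.

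\medskip

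\noindent\textbf{Proof proposal.}
The plan is to verify both identities by direct computation from the definition $(\partial_kf)(z)=\frac1{2\pi i}\bigl(f'(z)+\frac{kf(z)}{z-\overline z}\bigr)$, since the result is elementary and purely formal. For the Leibniz rule, I would simply expand
$$
(\partial_{k+\ell}(fg))(z)=\frac1{2\pi i}\left((fg)'+\frac{(k+\ell)fg}{z-\overline z}\right)
=\frac1{2\pi i}\left(f'g+fg'+\frac{kfg}{z-\overline z}+\frac{\ell fg}{z-\overline z}\right),
$$
then regroup the terms as $\bigl(\frac1{2\pi i}(f'+\frac{kf}{z-\overline z})\bigr)g+f\bigl(\frac1{2\pi i}(g'+\frac{\ell g}{z-\overline z})\bigr)=(\partial_kf)g+f(\partial_\ell g)$. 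This requires nothing beyond the ordinary product rule and is immediate.

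\medskip

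For the covariance identity, write $\gamma=\SM abcd\in\GL^+(2,\R)$, set $j(\gamma,z)=cz+d$ and $\gamma\cdot z=\frac{az+b}{cz+d}$, so that $(f|_k\gamma)(z)=j(\gamma,z)^{-k}f(\gamma\cdot z)$. The key auxiliary facts I would record first are the standard chain-rule computations: $\frac{d}{dz}(\gamma\cdot z)=\det(\gamma)\,j(\gamma,z)^{-2}$, the derivative $\frac{d}{dz}j(\gamma,z)^{-k}=-k\,c\,j(\gamma,z)^{-k-1}$, and the identity governing the non-holomorphic term,
$$
\frac{1}{\gamma\cdot z-\overline{\gamma\cdot z}}=\frac{j(\gamma,z)\,\overline{j(\gamma,z)}}{\det(\gamma)\,(z-\overline z)},
$$
which follows from $\gamma\cdot z-\overline{\gamma\cdot z}=\frac{\det(\gamma)(z-\overline z)}{j(\gamma,z)\overline{j(\gamma,z)}}$ (noting $\det\gamma>0$). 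Then I would compute $\frac{d}{dz}(f|_k\gamma)(z)$ by the product rule, substitute into the definition of $\partial_k(f|_k\gamma)$, and after multiplying through and using the three facts above, collect terms to recognize the result as $j(\gamma,z)^{-k-2}(\partial_kf)(\gamma\cdot z)=\bigl((\partial_kf)|_{k+2}\gamma\bigr)(z)$. The bookkeeping in this step is where all the content lies: the two pieces that do not obviously match — the $-kc\,j^{-k-1}f(\gamma\cdot z)$ coming from differentiating the automorphy factor, and the cross term between $j^{-k}$ and the $\frac{k}{z-\overline z}$ correction — must cancel, and checking that cancellation cleanly is the only place the argument can go wrong.

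\medskip

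\noindent\textbf{Main obstacle.}
There is no conceptual obstacle; the only real work is the algebraic bookkeeping in the covariance computation, keeping track of the automorphy factors $j(\gamma,z)$, their conjugates, and the factor $\det(\gamma)$ so that everything collapses to the claimed weight-$(k+2)$ slash action. An alternative that avoids some of this is to cite the fact directly from \cite{Shimura-Maass}, as the lemma statement already does, treating the displayed identities as known; but since a one-line verification of Leibniz and a short chain-rule check of covariance suffice, I would include the direct computation for self-containedness.
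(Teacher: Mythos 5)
The paper gives no proof of this lemma at all: it is quoted directly from Shimura's paper (equations (1.5) and (1.8) of \cite{Shimura-Maass}) and used as a black box. Your proposal supplies a self-contained verification, and it is correct. The Leibniz rule is indeed a one-line consequence of the product rule together with splitting $k+\ell$ in the correction term. For the covariance identity, the three auxiliary facts you record are exactly the right ones, and the cancellation you flag does go through: writing $j=cz+d$ and $w=\gamma\cdot z$, the two problematic terms combine because $\frac{1}{\bar j\,(w-\overline w)}-\frac{1}{j\,(w-\overline w)}=\frac{j-\bar j}{j\bar j\,(w-\overline w)}=\frac{c(z-\overline z)}{\det(\gamma)\,(z-\overline z)/1}\cdot\frac{1}{1}=c$ when $\det\gamma=1$, which kills the $-kc\,j^{-k-1}f(w)$ term and leaves precisely $j^{-k-2}(\partial_kf)(w)$. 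One caveat worth recording if you include the computation: with the paper's determinant-free slash $(f|_k\gamma)(z)=(cz+d)^{-k}f(\gamma\cdot z)$, the identity as displayed is exact on $\mathrm{SL}(2,\R)$, but for general $\gamma\in\mathrm{GL}^+(2,\R)$ an overall factor of $\det\gamma$ survives (Shimura's own normalization of the slash absorbs it). Since the paper only ever applies the lemma with $\gamma\in\mathrm{SL}(2,\Z)$, this is immaterial to anything downstream, but the lemma as literally stated for $\mathrm{GL}^+(2,\R)$ presupposes the determinant-normalized slash.
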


\begin{Remark} The second property in the lemma implies that if $f$ is
a nearly holomorphic modular form of weight $k$ on $\Gamma$, then
$\partial_kf$ is a nearly holomorphic form of weight 
$k+2$ on $\Gamma$.
\end{Remark}

Set also
$$
\partial_k^nf=\partial_{k+2n-2}\ldots\partial_kf.
$$
Then the coefficients $b_n$ in \eqref{equation: power series
  expansion} has the following expression.

\begin{Proposition}[{\cite[Proposition 17]{Zagier123}}]
  \label{proposition: power series coefficients}
  If $f(z)$ is a
  holomorphic modular form of weight $k$ on $\Gamma$, then the
  coefficients $b_n$ in \eqref{equation: power series expansion}
  are
  $$
  b_n=(\partial^n_kf)(z_0)(-4\pi\,\Im z_0)^n
  $$
  for $n\ge 0$. That is, we have
  $$
  f(z)=(1-w)^k\sum_{n=0}^\infty\frac{(\partial^n_kf)(z_0)
    (-4\pi\,\Im z_0)^n}{n!}w^n.
  $$
\end{Proposition}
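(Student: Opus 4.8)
The plan is to pass to the disk coordinate $w$, exploit that $(1-w)^{-k}f$ is holomorphic near $w=0$, and then convert ordinary Taylor coefficients in $w$ into Shimura--Maass derivatives in $z$ by an elementary induction. Write $c:=z_0-\overline z_0=2i\,\Im z_0$. Inverting $w=w_{z_0}(z)$ gives $z=(z_0-\overline z_0 w)/(1-w)$, from which
\[
z-\overline z_0=\frac{c}{1-w},\qquad \frac{dz}{dw}=\frac{(z-\overline z_0)^2}{c},\qquad (1-w)^k=\frac{c^k}{(z-\overline z_0)^k}.
\]
Hence the claimed identity $f=(1-w)^k\sum_{n\ge0}\frac{b_n}{n!}w^n$ is equivalent to the statement that $g(z):=(z-\overline z_0)^kf(z)$, which is holomorphic near $z_0$, has Taylor expansion $g=c^k\sum_{n\ge0}\frac{b_n}{n!}w^n$ in the variable $w$; such an expansion exists and is unique because $(z-\overline z_0)^k$ is holomorphic and nonvanishing near $z_0$. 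Therefore $b_n=c^{-k}\bigl[(d/dw)^n g\bigr]_{w=0}$, and it remains to compute these derivatives.

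Using $d/dw=\frac{(z-\overline z_0)^2}{c}\,d/dz$, I introduce the ``$z_0$-twisted'' raising operators $\widehat\partial_m h:=\frac{1}{2\pi i}\bigl(h'+\frac{mh}{z-\overline z_0}\bigr)$ and set $\widehat\partial_k^{\,n}:=\widehat\partial_{k+2n-2}\cdots\widehat\partial_{k}$, in exact analogy with $\partial_k^n$. A one-line computation shows
\[
\frac{(z-\overline z_0)^2}{c}\,\frac{d}{dz}\bigl((z-\overline z_0)^m h\bigr)=\frac{2\pi i}{c}\,(z-\overline z_0)^{m+2}\,\widehat\partial_m h
\]
for holomorphic $h$; iterating with $m=k,k+2,k+4,\dots$ yields
\[
\Bigl(\frac{d}{dw}\Bigr)^n g=\Bigl(\frac{2\pi i}{c}\Bigr)^n(z-\overline z_0)^{k+2n}\,\widehat\partial_k^{\,n}f.
\]
Evaluating at $w=0$, i.e.\ at $z=z_0$ where $z-\overline z_0=c$, gives $b_n=(2\pi i\,c)^n\,\widehat\partial_k^{\,n}f(z_0)=(-4\pi\,\Im z_0)^n\,\widehat\partial_k^{\,n}f(z_0)$.

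The remaining point, and the only genuinely subtle one, is to replace $\widehat\partial_k^{\,n}f(z_0)$ by $(\partial_k^nf)(z_0)$: the Shimura--Maass operator $\partial_k$ uses $z-\overline z$ and produces merely nearly holomorphic forms, whereas $\widehat\partial_k$ uses the holomorphic function $z-\overline z_0$. Both towers, however, arise from the identical recursion $h\mapsto\frac{1}{2\pi i}\bigl(h'+\frac{(k+2j)h}{D}\bigr)$, in which the symbol $D$ (taken to be $z-\overline z$, resp.\ $z-\overline z_0$) enters only through the relation $D'=1$ and through appearing in denominators; consequently $\partial_k^nf$ and $\widehat\partial_k^{\,n}f$ are given by one and the same universal polynomial in $f,f',\dots,f^{(n)}$ and $1/D$ with coefficients depending only on $k$ and $n$. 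Since $D$ takes the common value $z_0-\overline z_0=c$ at $z=z_0$, the two towers agree there, so $(\partial_k^nf)(z_0)=\widehat\partial_k^{\,n}f(z_0)$, and the proposition follows. I expect the verification of this last identification to be the main obstacle, while the steps leading to it are the routine change-of-variables computation indicated above.
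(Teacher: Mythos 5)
Your argument is correct and complete. Note first that the paper itself gives no proof of this proposition: it is quoted from Zagier's \emph{Elliptic modular forms and their applications} (Proposition 17 there), with the authors only remarking on a sign misprint in Zagier's statement. So your write-up is a self-contained proof of a cited result rather than a variant of an argument in the paper. The computation checks out: $1-w=(z_0-\overline z_0)/(z-\overline z_0)$ gives $(1-w)^k=c^k(z-\overline z_0)^{-k}$ with $c=2i\,\Im z_0$, the conjugation identity
$$
\frac{(z-\overline z_0)^2}{c}\,\frac{d}{dz}\bigl((z-\overline z_0)^m h\bigr)
=\frac{2\pi i}{c}\,(z-\overline z_0)^{m+2}\,\widehat\partial_m h
$$
is correct, the induction gives $b_n=(2\pi i\,c)^n\widehat\partial_k^{\,n}f(z_0)=(-4\pi\,\Im z_0)^n\widehat\partial_k^{\,n}f(z_0)$, matching the sign in the paper's (corrected) statement. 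The step you rightly flag as the only delicate one --- replacing $\widehat\partial_k^{\,n}f(z_0)$ by $(\partial_k^nf)(z_0)$ --- is handled soundly: both towers are the same universal polynomial in $f,f',\dots,f^{(n)}$ and $u=1/D$, because in each case the relevant derivative ($\partial/\partial z$ in the Wirtinger sense for $D=z-\overline z$, $d/dz$ for $D=z-\overline z_0$) satisfies $D'=1$, hence $u'=-u^2$, and the two choices of $D$ coincide at $z=z_0$. This is essentially the standard proof (and close in spirit to Zagier's own, which rests on the same comparison of $z-\overline z$ with $z-\overline z_0$); one could also observe that modularity of $f$ is never used, only holomorphy on $\H$, which is consistent with the way the paper later extends the expansion to meromorphic forms by writing them as quotients of holomorphic ones.
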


Note that there is a misprint in Proposition 17 \cite{Zagier123}. The
proof of the proposition shows that $b_n=(\partial^nf)(z_0)(-4\pi
\,\Im z_0)^n$, but the statement misses the minus sign.

We will use these properties of power series expansions of modular
forms to show that the apparentness of \eqref{(1.1)} at a point $z_0$
will imply the apparentness at $\gamma z_0$ for all
$\gamma\in\SL(2,\Z)$. We first prove two lemmas. The first lemma
relates the power series expansion of a meromorphic modular form at
$z_0$ to that at $\gamma z_0$.

\begin{Lemma} \label{lemma: other points}
  Assume that $f$ is a meromorphic modular form of weight $k$ on
  $\SL(2,\Z)$. Assume that the power series expansion of $f$ at
  $z_0\in\H$ is
  $$
  f(z)=(1-w)^k\sum_{n=n_0}^\infty a_nw^n, \qquad
  w=w_{z_0}(z)=\frac{z-z_0}{z-\overline z_0}.
  $$
  For $\gamma=\SM abcd\in\SL(2,\Z)$, let
  $\wt w=w_{\gamma z_0}(z)=(z-\gamma z_0)/(z-\gamma\overline{z}_0)$.
  Then the power series expansion of $f$ at $\wt z_0$ is
  $$
    (cz_0+d)^k(1-\wt w)^k\sum_{n=n_0}^\infty
    a_n\left(\frac{cz_0+d}{c\overline z_0+d}
      \right)^n\wt w^n.
  $$
\end{Lemma}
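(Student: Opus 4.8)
The plan is to obtain the expansion at $\gamma z_0$ from the given expansion at $z_0$ by a single application of the modularity of $f$, converting the automorphy factor into the prefactors appearing in the claimed formula. Writing $\gamma^{-1}=\SM{d}{-b}{-c}{a}$, the transformation law applied to $\gamma^{-1}$ gives $f(z)=(a-cz)^{-k}f(\gamma^{-1}z)$. Since $\gamma^{-1}z\to z_0$ as $z\to\gamma z_0$ and the power series expansion of the meromorphic form $f$ at $z_0$ is valid on a punctured neighbourhood of $z_0$, I can substitute the hypothesized expansion for $f(\gamma^{-1}z)$, namely $f(\gamma^{-1}z)=(1-w_{z_0}(\gamma^{-1}z))^k\sum_{n\ge n_0}a_n\,w_{z_0}(\gamma^{-1}z)^n$, and then re-express everything in terms of $\wt w=w_{\gamma z_0}(z)$.

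Two elementary M\"obius computations do the work. Writing $\gamma^{-1}z=(dz-b)/(a-cz)$ and using $\gamma z_0=(az_0+b)/(cz_0+d)$ together with $\gamma\overline z_0=\overline{\gamma z_0}$ (valid since $\gamma$ has real entries, so that $\wt w$ as defined equals $w_{\gamma z_0}(z)$), one clears denominators to get $\gamma^{-1}z-z_0=(cz_0+d)(z-\gamma z_0)/(a-cz)$ and $\gamma^{-1}z-\overline z_0=(c\overline z_0+d)(z-\gamma\overline z_0)/(a-cz)$. Dividing these gives $w_{z_0}(\gamma^{-1}z)=\mu\,\wt w$ with $\mu=(cz_0+d)/(c\overline z_0+d)$, which accounts for the factor $\mu^n$ on the coefficients. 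Subtracting $1$ from $w_{z_0}(\gamma^{-1}z)$ and from $\wt w$, and using the standard identity $\gamma z_0-\gamma\overline z_0=(z_0-\overline z_0)/\big((cz_0+d)(c\overline z_0+d)\big)$, the same two displayed identities yield $(1-w_{z_0}(\gamma^{-1}z))/(1-\wt w)=(a-cz)(cz_0+d)$, hence $(a-cz)^{-k}(1-w_{z_0}(\gamma^{-1}z))^k=(cz_0+d)^k(1-\wt w)^k$.

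Substituting into $f(z)=(a-cz)^{-k}(1-w_{z_0}(\gamma^{-1}z))^k\sum_{n\ge n_0}a_n\,w_{z_0}(\gamma^{-1}z)^n$ then produces exactly $(cz_0+d)^k(1-\wt w)^k\sum_{n\ge n_0}a_n\big((cz_0+d)/(c\overline z_0+d)\big)^n\wt w^n$. Since this is a convergent series in $\wt w$ of the shape $(1-\wt w)^k\sum b_n'\wt w^n$ representing $f$ on a punctured neighbourhood of $\gamma z_0$, uniqueness of the power series expansion identifies it with the power series expansion of $f$ at $\gamma z_0$, which is the assertion. There is no serious obstacle here beyond careful bookkeeping of the automorphy factors and of the centre of each expansion; the one point to watch is that the transported series must be written in the variable $z$ through $\wt w=w_{\gamma z_0}(z)$ rather than in $\gamma z$, and it is precisely this that forces the scalar $\mu^n$ onto the coefficients and the factor $(cz_0+d)^k$ out front.
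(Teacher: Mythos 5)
Your proof is correct, and it takes a genuinely different route from the paper's. The paper first reduces to the case of a holomorphic modular form (writing a meromorphic form as a quotient of two holomorphic ones), then invokes the coefficient formula $b_n=(\partial_k^nf)(z_0)(-4\pi\,\Im z_0)^n$ from Proposition \ref{proposition: power series coefficients}, and deduces the transformation of the coefficients from the weight-$(k+2n)$ modularity of $\partial_k^nf$ together with $\Im\gamma z_0=\Im z_0/|cz_0+d|^2$. You instead apply the automorphy relation once, in the form $f(z)=(a-cz)^{-k}f(\gamma^{-1}z)$, substitute the expansion at $z_0$ evaluated at $\gamma^{-1}z$, and convert via the two M\"obius identities $w_{z_0}(\gamma^{-1}z)=\mu\,\wt w$ with $\mu=(cz_0+d)/(c\overline z_0+d)$ and $\bigl(1-w_{z_0}(\gamma^{-1}z)\bigr)/(1-\wt w)=(a-cz)(cz_0+d)$; I have checked both identities and the cancellation $(a-cz)^{-k}(1-w_{z_0}(\gamma^{-1}z))^k=(cz_0+d)^k(1-\wt w)^k$, and they are right. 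Your argument is more elementary and more self-contained: it needs no Shimura--Maass calculus and no reduction to the holomorphic case, and it treats an arbitrary Laurent tail $n_0<0$ on the same footing as the holomorphic part, whereas the paper's route is essentially free given that Proposition \ref{proposition: power series coefficients} has already been set up for other purposes in that section. The appeal to uniqueness of the Laurent expansion in $\wt w$ at the end is legitimate, since $z\mapsto\wt w$ is a biholomorphism of $\H$ onto the unit disc, so $(1-\wt w)^{-k}f$ is a meromorphic function of $\wt w$ near $\wt w=0$ with a unique Laurent series there.
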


\begin{proof} Since every meromorphic modular form on $\SL(2,\Z)$ can
  be written as the quotient of two holomorphic modular forms on
  $\SL(2,\Z)$, it suffices to prove the lemma under the assumption
  that $f$ is a holomorphic modular form.
  
  According to Proposition \ref{proposition: power series
    coefficients}, the power series expansions of $f$ at $z_0$ and at
  $\gamma z_0$ are
  $$
  (1-w)^k\sum_{n=0}^\infty\frac{(\partial^n_kf)(z_0)(-4\pi\,\Im
    z_0)^n}{n!}w^n
  $$
  are
  $$
  (1-\wt w)^k\sum_{n=0}^\infty\frac{(\partial^n_kf)(\gamma z_0)(-4\pi\,\Im
    \gamma z_0)^n}{n!}\wt w^n,
  $$
  respectively. Since $\partial^nf(z)$ is modular of weight $k+2n$
  (see the remark following Lemma \ref{lemma: Maass}), we have
  $$
  (\partial^nf)(\gamma z_0)=(cz_0+d)^{k+2n}(\partial^nf)(z_0).
  $$
  Also,
  \begin{equation} \label{equation: Im}
  \Im\gamma z_0=\frac{\Im z_0}{|cz_0+d|^2}.
  \end{equation}
  Thus, if the power series expansion of $f$ at $z_0$ is
  $$
    (1-w)^k\sum_{n=0}^\infty\frac{b_n}{n!}w^n,
  $$
  then that of $f$ at $\gamma z_0$ is
  \begin{equation*}
    \begin{split}
    &(1-\wt w)^k\sum_{n=0}^\infty\frac{b_n}{n!}
    \frac{(cz_0+d)^{k+2n}}{|cz_0+d|^{2n}}\wt w^n \\
    &\qquad=(cz_0+d)^k(1-\wt w)^k\sum_{n=0}^\infty
    \frac{b_n}{n!}\left(\frac{cz_0+d}{c\overline z_0+d}\right)^n
    \wt w^n.
    \end{split}
  \end{equation*}
  This proves the lemma.
\end{proof}

The next lemma expresses $y''(z)$ in terms of $w$.

\begin{Lemma} \label{lemma: df/dw}
  Let $z_0\in\H$ and set $w=w_{z_0}(z)=(z-z_0)/(z-\overline z_0)$.
  If
  $$
  y(z)=\frac1{1-w}\sum_{n=0}^\infty a_nw^{\alpha+n}
  $$
  for some real number $\alpha$, then
  \begin{equation*}
  \frac{d^2}{dz^2}y(z)=\frac{(1-w)^3}{(z_0-\overline z_0)^2}
  \sum_{n=0}^\infty a_n(\alpha+n)(\alpha+n-1)w^{\alpha+n-2}.
  \end{equation*}
\end{Lemma}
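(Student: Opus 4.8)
The plan is to carry out the chain-rule computation directly, taking advantage of the fact that $dw/dz$ is a simple polynomial in $w$. First I record the two elementary identities
\[
1-w=\frac{z_0-\overline z_0}{z-\overline z_0},\qquad
\frac{dw}{dz}=\frac{z_0-\overline z_0}{(z-\overline z_0)^2}=\frac{(1-w)^2}{z_0-\overline z_0},
\]
the second following from the first by the quotient rule. Consequently $d/dz$ acts on functions of $w$ through the operator $D:=\frac{(1-w)^2}{z_0-\overline z_0}\,\frac{d}{dw}$, so that $\frac{d^2}{dz^2}=D^2$ on any such function.

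By linearity it suffices to treat a single term $y=w^{\beta}/(1-w)$ with $\beta=\alpha+n$, the general case following by summing over $n\ge 0$; all the manipulations below are term-by-term and valid within the radius of convergence of the series defining $y$. A direct differentiation gives
\[
(1-w)^2\frac{d}{dw}\!\left(\frac{w^{\beta}}{1-w}\right)
=\beta w^{\beta-1}(1-w)+w^{\beta}
=w^{\beta-1}\bigl(\beta-(\beta-1)w\bigr),
\]
hence $Dy=\frac{1}{z_0-\overline z_0}\,w^{\beta-1}\bigl(\beta-(\beta-1)w\bigr)$. Applying $D$ a second time and using
\[
\frac{d}{dw}\Bigl(w^{\beta-1}\bigl(\beta-(\beta-1)w\bigr)\Bigr)
=(\beta-1)w^{\beta-2}\bigl(\beta-(\beta-1)w\bigr)-(\beta-1)w^{\beta-1}
=\beta(\beta-1)(1-w)w^{\beta-2},
\]
we get
\[
\frac{d^2}{dz^2}\frac{w^{\beta}}{1-w}=D^2y
=\frac{(1-w)^2}{z_0-\overline z_0}\cdot\frac{\beta(\beta-1)(1-w)w^{\beta-2}}{z_0-\overline z_0}
=\frac{(1-w)^3}{(z_0-\overline z_0)^2}\,\beta(\beta-1)\,w^{\beta-2}.
\]
Summing over $n$ with $\beta=\alpha+n$ yields the claimed formula.

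There is no real obstacle here: the computation is purely mechanical, and the only point that needs care is keeping track of the powers of $1-w$. Conceptually, the prefactor $1/(1-w)$ in front of the series is exactly what makes the two factors of $(1-w)^2$ produced by applying the chain rule twice combine with it into the clean prefactor $(1-w)^3$ — the weight shift $-1\mapsto 3$ familiar from Bol's identity. Because each term is handled separately, the identity is valid wherever the series for $y$ converges, and as a formal identity in $w$ otherwise.
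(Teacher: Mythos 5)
Your computation is correct and is essentially the same direct chain-rule argument as the paper's proof; the only cosmetic difference is that you iterate the first-order operator $D=\frac{(1-w)^2}{z_0-\overline z_0}\frac{d}{dw}$ term by term, whereas the paper differentiates $g(w)=\sum a_nw^{\alpha+n}$ all at once using both $dw/dz$ and $d^2w/dz^2$, arriving at the same cancellation $\frac{d^2y}{dz^2}=\frac{(1-w)^3}{(z_0-\overline z_0)^2}\frac{d^2g}{dw^2}$. (Incidentally, your identity $1-w=(z_0-\overline z_0)/(z-\overline z_0)$ corrects a harmless typo in the paper's version, which writes $z-z_0$ in the denominator.)
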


\begin{proof} We first note that
  $$
  1-w=\frac{z_0-\overline z_0}{z-z_0}
  $$
  and hence
  \begin{equation} \label{equation: w'}
  \frac{dw}{dz}=\frac{z_0-\overline z_0}{(z-z_0)^2}
  =\frac{(1-w)^2}{z_0-\overline z_0}, \quad
  \frac{d^2w}{dz^2}=-2\frac{z_0-\overline z_0}{(z-z_0)^3}
  =-\frac{2(1-w)^3}{(z_0-\overline z_0)^2}.
  \end{equation}
  Let $g(w)=\sum a_nw^{\alpha+n}$. We compute that
  $$
  \frac{dy}{dz}=\left(\frac1{(1-w)^2}g(w)
    +\frac1{1-w}\frac{dg(w)}{dw}\right)\frac{dw}{dz}
  $$
  and
  \begin{equation*}
    \begin{split}
  \frac{d^2y}{dz^2}&=\left(
    \frac2{(1-w)^3}g(w)+\frac2{(1-w)^2}\frac{dg(w)}{dw}
    +\frac1{1-w}\frac{d^2g(w)}{dw^2}\right)
    \left(\frac{dw}{dz}\right)^2 \\
  &\qquad+\left(\frac1{(1-w)^2}g(w)+\frac1{1-w}\frac{dg(w)}{dw}\right)
  \frac{d^2w}{dz^2}.
    \end{split}
  \end{equation*}
  Using \eqref{equation: w'}, we reduce this to
  $$
  \frac{d^2y}{dz^2}=\frac{(1-w)^3}{(z_0-\overline z_0)^2}
  \frac{d^2g(w)}{dw^2}.
  $$
  This proves the lemma.
\end{proof}

\begin{Proposition} \label{proposition: same exponents}
  Suppose that $Q$ is a meromorphic modular form of weight $4$ with
  respect to $\SL(2,\Z)$ such that \eqref{(1.1)} is Fuchsian. Let $z_0$ be a
  pole of $Q$. Then the local exponents of \eqref{(1.1)} at $\gamma
  z_0$ are the same for all $\gamma\in\SL(2,\Z)$. Also, if
  \eqref{(1.1)} is apparent at $z_0$, 
  then it is apparent at $\gamma z_0$ for all $\gamma\in\SL(2,\Z)$.
\end{Proposition}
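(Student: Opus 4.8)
The plan is to run the Frobenius method at $z_0$ and at $\gamma z_0$ in parallel, using Lemma~\ref{lemma: df/dw} to turn \eqref{(1.1)} into a three‑term recursion in the local coordinate $w=w_{z_0}(z)$, and then using Lemma~\ref{lemma: other points} to identify the two recursions. Since \eqref{(1.1)} is Fuchsian, $Q$ has at worst a double pole at $z_0$, so write its power series expansion there as $Q(z)=(1-w)^4\sum_{m\ge-2}c_m w^m$. Seeking a solution $y(z)=(1-w)^{-1}\sum_{n\ge0}a_nw^{\alpha+n}$ with $a_0\ne0$, I would compute $y''$ from Lemma~\ref{lemma: df/dw}, substitute into \eqref{(1.1)}, cancel the common factor $(1-w)^3$, and multiply through by $(z_0-\overline z_0)^2$. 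Comparing coefficients of $w^{\alpha+n-2}$ then yields the indicial equation $\alpha(\alpha-1)=(z_0-\overline z_0)^2c_{-2}$, so that the local exponents of \eqref{(1.1)} at $z_0$ are $\tfrac12\pm\kappa$ with $\kappa^2=\tfrac14+(z_0-\overline z_0)^2c_{-2}$, together with the recursion
$$
I_{z_0}(\alpha+n)\,a_n=(z_0-\overline z_0)^2\sum c_m a_\ell,\qquad I_{z_0}(X):=X(X-1)-(z_0-\overline z_0)^2c_{-2},
$$
where the sum runs over $m+\ell=n-2$ with $0\le\ell\le n-1$. By standard Frobenius theory (or the developing‑map argument in the proof of Theorem~\ref{theorem: 2.1}(c)), $z_0$ is apparent precisely when $2\kappa\in\Z_{>0}$ and, starting from the smaller exponent $\alpha=\tfrac12-\kappa$, the right‑hand side of this recursion vanishes at the positive index $n=2\kappa$ where $I_{z_0}(\alpha+n)$ vanishes again.

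Next I would carry out the same computation at $\gamma z_0$, for $\gamma=\SM abcd\in\SL(2,\Z)$. By Lemma~\ref{lemma: other points}, the power series expansion of $Q$ at $\gamma z_0$ is $(cz_0+d)^4(1-\wt w)^4\sum_{m\ge-2}c_m\mu^m\wt w^m$, where $\wt w=w_{\gamma z_0}(z)$ and $\mu=(cz_0+d)/(c\overline z_0+d)$ has $|\mu|=1$; in particular $Q$ again has a double pole at $\gamma z_0$, since $(cz+d)^4$ is a nonvanishing holomorphic unit near $z_0$. Using $(\gamma z_0-\overline{\gamma z_0})^2=(z_0-\overline z_0)^2/|cz_0+d|^4$, which follows from \eqref{equation: Im}, and the identity $(cz_0+d)^4/|cz_0+d|^4=\mu^2$, the same computation (now in the variable $\wt w$) produces the indicial equation $\beta(\beta-1)=(z_0-\overline z_0)^2c_{-2}$, identical to the one at $z_0$. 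This already gives the first assertion: the local exponents of \eqref{(1.1)} at $\gamma z_0$ agree with those at $z_0$.

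For apparentness, in the Frobenius ansatz $y(z)=(1-\wt w)^{-1}\sum_{n\ge0}\wt a_n\wt w^{\beta+n}$ at $\gamma z_0$ I would substitute $\wt a_n=\mu^n a_n$, which is harmless because $\mu$ is a unit. Using the relation $m+2=n-\ell$ whenever $m+\ell=n-2$, all powers of $\mu$ appearing in the recursion for $\wt a_n$ collect into a single overall factor $\mu^n$, and dividing it out recovers verbatim the recursion $I_{z_0}(\beta+n)a_n=(z_0-\overline z_0)^2\sum c_m a_\ell$ that governs the expansion at $z_0$. Hence the obstruction to a logarithm‑free second solution --- the value of the right‑hand side at $n=2\kappa$ with $\beta=\tfrac12-\kappa$ --- is literally the same complex number at $z_0$ and at $\gamma z_0$. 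Therefore, if \eqref{(1.1)} is apparent at $z_0$ then $2\kappa\in\Z_{>0}$ and that obstruction vanishes, so by the above the same holds at $\gamma z_0$, and \eqref{(1.1)} is apparent at $\gamma z_0$.

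I do not expect a genuine obstacle here; the only delicate point is keeping track of the scalars $cz_0+d$, $\mu=(cz_0+d)/(c\overline z_0+d)$ and $z_0-\overline z_0$ so that they cancel, which is exactly where the weight‑$4$ modularity of $Q$ --- as opposed to the ``weight $-1$'' normalization $(1-w)^{-1}$ built into the ansatz for $y$ --- is used. Since $Q$ is only assumed meromorphic I would not expand it by hand but quote Lemma~\ref{lemma: other points}, whose proof already reduces the meromorphic case to the holomorphic one by writing $Q$ as a quotient of holomorphic modular forms.
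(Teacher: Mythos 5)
Your proposal is correct and follows essentially the same route as the paper's proof: both use Lemma~\ref{lemma: other points} to transport the expansion of $Q$ from $z_0$ to $\gamma z_0$, Lemma~\ref{lemma: df/dw} to express $y''$ in the coordinate $w$, and the observation that rescaling the Frobenius coefficients by powers of $C=(cz_0+d)/(c\overline z_0+d)$ turns a solution at $z_0$ into one at $\gamma z_0$ with the same exponents and the same vanishing obstruction. The paper phrases this as exhibiting the transformed series $\wt y$ directly as a solution rather than as an identification of recursions, but the computation and the cancellation of the scalars $(cz_0+d)^4$ and $(z_0-\overline z_0)^2$ are identical.
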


\begin{proof} Let $\gamma=\SM
  abcd\in\SL(2,\Z)$, $w=(z-z_0)/(z-\overline z_0)$, and $\wt
  w=(z-\gamma z_0)/(z-\gamma\overline z_0)$. It suffices to prove that
  if
  $$
  y(z)=\frac1{1-w}w^\alpha\sum_{n=0}^\infty c_nw^n
  $$
  is a solution of \eqref{(1.1)} near $z_0$, then
  $$
  \wt y(z)=\frac1{1-\wt w}\wt w^\alpha\sum_{n=0}^\infty
  c_n(C\wt w)^n, \qquad C=\frac{cz_0+d}{c\overline z_0+d},
  $$
  is a solution of \eqref{(1.1)} near $\gamma z_0$.

  Since \eqref{(1.1)} is assumed to be Fuchsian, the order of poles of
  $Q(z)$ at $z_0$ is at most $2$. We have
  \begin{equation*}
    \begin{split}
    Q(z)=(1-w)^4\sum_{n=-2}^\infty a_nw^n
    \end{split}
  \end{equation*}
  for some complex numbers $a_n$. Then by Lemma \ref{lemma: df/dw},
  $y(z)$ being a solution of \eqref{(1.1)} near $z_0$ means that
  \begin{equation} \label{equation: change}
    \begin{split}
      &\frac1{(2i\,\Im z_0)^2}\sum_{n=0}^\infty c_n(\alpha+n)(\alpha+n-1)
      w^{\alpha+n-2} \\
    &\qquad=\left(\sum_{n=-2}^\infty a_nw^n\right)
    \left(\sum_{n=0}^\infty c_nw^{\alpha+n}\right).
    \end{split}
  \end{equation}
  On the other hand, by Lemmas \ref{lemma: df/dw} and \ref{lemma:
    other points}, we have
  \begin{equation*}
    \begin{split}
    Q(z)=(cz_0+d)^4(1-\wt w)^4\sum_{n=-2}^\infty a_n(C\wt w)^n
    \end{split}
  \end{equation*}
  near $\gamma z_0$ and
  \begin{equation*}
    \begin{split}
    \wt y''(z)&=\frac{C^2(1-\wt w)^3}{(2i\,\Im\gamma z_0)^2}
    \sum_{n=0}^\infty c_n(\alpha+n)(\alpha+n-1)C^n\wt w^{\alpha+n-2} \\
    &=(cz_0+d)^4\frac{(1-\wt w)^3}{(2i\,\Im z_0)^2}
    \sum_{n=0}^\infty c_n(\alpha+n)(\alpha+n-1)C^n\wt w^{\alpha+n-2},
    \end{split}
  \end{equation*}
  where in the last step we have used \eqref{equation: Im} and
  $C=(cz_0+d)/(c\overline z_0+d)$. From these
  two expressions and \eqref{equation: change}, we see that if $y(z)$
  is a solution of \eqref{(1.1)} near $z_0$, then $\wt y(z)$
  is a solution of \eqref{equation: DE 3} near $\gamma z_0$, and the
  proof is completed.
\end{proof}

For our purpose, we need the following properties of power series
expansions of modular forms on $\SL(2,\Z)$. These properties are
well-known to experts (see \cite{I-O-elliptic}, for example). For
convenience of the reader, we reproduce the proofs here.

\begin{Lemma} Let
  $$
  w_i(z)=\frac{z-i}{z+i}.
  $$
  Then
  $$
  w_i(-1/z)=-w_i(z), \qquad 1-w_i(-1/z)=-iz(1-w_i(z)).
  $$
  Also, let $\rho=(1+\sqrt{-3})/2$,
  $$
  w_\rho(z)=\frac{z-\rho}{z-\overline\rho}
  $$
  and $\gamma=\SM0{-1}1{-1}$. Then 
  $$
  w_\rho(\gamma z)=e^{2\pi i/3}w_\rho(z), \qquad
  1-w_\rho(\gamma z)=e^{4\pi i/3}(z-1)(1-w_\rho(z)).
  $$
\end{Lemma}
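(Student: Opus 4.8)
The plan is to verify each of the four identities by a direct substitution of the relevant M\"obius transformation into the definition of $w_i$ or $w_\rho$ followed by simplification, so the argument is entirely elementary; the only bookkeeping to watch is the appearance of roots of unity in the $w_\rho$ part. The one preliminary observation that streamlines everything is that for any $z_0\in\H$ one has
$$
1-w_{z_0}(z)=\frac{(z-\overline z_0)-(z-z_0)}{z-\overline z_0}=\frac{z_0-\overline z_0}{z-\overline z_0}.
$$
Hence for any M\"obius map $\sigma$ we get $1-w_{z_0}(\sigma z)=(z_0-\overline z_0)/(\sigma z-\overline z_0)$, and dividing by $1-w_{z_0}(z)$ shows that each ``$1-w$'' identity reduces to evaluating the single quantity $\sigma z-\overline z_0$; likewise each ``$w$'' identity follows from $w_{z_0}(\sigma z)=(\sigma z-z_0)/(\sigma z-\overline z_0)$.

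For the $w_i$ statements I would take $\sigma z=-1/z$, $z_0=i$, $\overline z_0=-i$. A one-line simplification gives $w_i(-1/z)=(-1-iz)/(-1+iz)$; multiplying numerator and denominator by $i$ rewrites this as $-(z-i)/(z+i)=-w_i(z)$, which is the first identity. For the second, one computes $1-w_i(-1/z)=1+w_i(z)=2z/(z+i)$ and $1-w_i(z)=2i/(z+i)$, so the quotient of the two is $z/i=-iz$, i.e.\ $1-w_i(-1/z)=-iz\,(1-w_i(z))$.

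For the $w_\rho$ statements I would first record $\gamma z=-1/(z-1)$ and the elementary facts that $\rho$ is a primitive sixth root of unity: $\rho^2=\rho-1$, $\overline\rho^{\,2}=\overline\rho-1$ (the conjugate relation), $\rho\overline\rho=1$, $\rho/\overline\rho=e^{2\pi i/3}$, and $-\rho=e^{4\pi i/3}$. Then
$$
\gamma z-\rho=\frac{-1-\rho(z-1)}{z-1}=\frac{(\rho-1)-\rho z}{z-1}=\frac{\rho^2-\rho z}{z-1}=\frac{-\rho(z-\rho)}{z-1},
$$
and in the same way $\gamma z-\overline\rho=-\overline\rho(z-\overline\rho)/(z-1)$; dividing these yields $w_\rho(\gamma z)=(\rho/\overline\rho)\,w_\rho(z)=e^{2\pi i/3}w_\rho(z)$. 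For the last identity, the preliminary observation gives
$$
\frac{1-w_\rho(\gamma z)}{1-w_\rho(z)}=\frac{z-\overline\rho}{\gamma z-\overline\rho}=\frac{(z-\overline\rho)(z-1)}{-\overline\rho(z-\overline\rho)}=-\frac{z-1}{\overline\rho}=-\rho(z-1)=e^{4\pi i/3}(z-1),
$$
using $\overline\rho^{-1}=\rho$, and this is precisely $1-w_\rho(\gamma z)=e^{4\pi i/3}(z-1)(1-w_\rho(z))$. There is no genuine obstacle in this lemma; the only point requiring a little attention is the consistent use of the two quadratic relations $\rho^2=\rho-1$ and $\overline\rho^{\,2}=\overline\rho-1$ together with the identifications $\rho/\overline\rho=e^{2\pi i/3}$ and $-\rho=e^{4\pi i/3}$, which fix all the roots of unity that appear.
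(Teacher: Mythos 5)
Your proof is correct, and it is essentially the paper's argument: both are direct verifications using the quadratic relation $\rho^2=\rho-1$ and $\rho\overline\rho=1$, the only cosmetic difference being that the paper packages the first $w_\rho$ identity as a conjugation of M\"obius matrices while you compute $\gamma z-\rho$ and $\gamma z-\overline\rho$ directly (and you also write out the $w_i$ case, which the paper omits as straightforward). All the roots of unity come out right, so nothing further is needed.
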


\begin{proof} The proof is straightforward. Here we will only provide
  details for the case of $w_\rho(z)$.

  We have
  $$
  w_\rho(z)=\M1{-\rho}1{-\overline\rho}z.
  $$
  Hence,
  $$
  w_\rho(\gamma z)=\M1{-\rho}1{-\overline\rho}\M0{-1}1{-1}z.
  $$
  We then compute that
  $$
  \M1{-\rho}1{-\overline\rho}\M0{-1}1{-1}
  \M1{-\rho}1{-\overline\rho}^{-1}
  =\M{(-1-\sqrt{-3})/2}00{(-1+\sqrt{-3})/2}.
  $$
  It follows that
  $$
  w_\rho(\gamma z)=e^{2\pi i/3}w_\rho(z).
  $$
  Then we have
  $$
  1-w_\rho(\gamma z)=1-\rho^2w_\rho(z)
  =1-\frac{\rho^2z+1}{z-\overline\rho}
  =\frac{(1-\rho^2)(z-1)}{z-\overline\rho},
  $$
  while
  $$
  1-w_\rho(z)=\frac{\rho-\rho^{-1}}{z-\overline\rho}.
  $$
  Hence,
  $$
  1-w_\rho(\gamma z)=-\rho(z-1)(1-w_\rho(z))
  =e^{4\pi i/3}(z-1)(1-w_\rho(z)).
  $$
  This proves the lemma.
\end{proof}

From the lemma, we deduce the following properties of expansions of
modular forms at $i$ and $\rho$. These properties will be crucial in
the proofs of Theorem \ref{theorem: 1.5}(a) and Theorem \ref{theorem: 1.7}(a).

\begin{Corollary}\label{corollary: expansion of f at i} Let $f(z)$ be a
  meromorphic modular form of even weight $k$ on $\SL(2,\Z)$. Suppose
  that the power series expansion of $f$ at $i$ is
  $$
  f(z)=(1-w_i(z))^k\sum_{n=n_0}^\infty a_nw_i(z)^n, \qquad
  w_i(z)=\frac{z-i}{z+i}.
  $$
  Then $a_n=0$ whenever $n+k/2\not\equiv 0\mod 2$. Also, if the power
  series expansion of $f$ at $\rho=(1+\sqrt{-3})/2$ is
  $$
  f(z)=(1-w_\rho(z))^k\sum_{n=n_0}^\infty b_nw_\rho(z)^n, \qquad
  w_\rho(z)=\frac{z-\rho}{z-\overline\rho},
  $$
  then $b_n=0$ whenever $n+k/2\not\equiv 0\mod 3$.
\end{Corollary}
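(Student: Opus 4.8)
The plan is to exploit the modularity of $f$ under the element of $\SL(2,\Z)$ fixing the elliptic point in question, together with the transformation formulas for $w_i$ and $w_\rho$ established in the preceding lemma, and then to compare coefficients of the power series term by term.

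First I would treat the point $i$, using the element $S=\SM 0{-1}10\in\SL(2,\Z)$, which fixes $i$. Modularity of weight $k$ gives $f(-1/z)=z^kf(z)$. I would substitute the power series expansion $f(z)=(1-w_i(z))^k\sum_{n\ge n_0}a_nw_i(z)^n$ into the left-hand side and use the lemma's identities $w_i(-1/z)=-w_i(z)$ and $1-w_i(-1/z)=-iz(1-w_i(z))$; this rewrites $f(-1/z)$ as $(-i)^kz^k(1-w_i(z))^k\sum_n a_n(-1)^nw_i(z)^n$. Matching the coefficient of each $w_i(z)^n$ against the expansion of $z^kf(z)$ yields $(-i)^k(-1)^na_n=a_n$, so $a_n=0$ unless $(-i)^k(-1)^n=1$. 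Since $k$ is even, $(-i)^k=\bigl((-i)^2\bigr)^{k/2}=(-1)^{k/2}$, and the surviving condition is $(-1)^{n+k/2}=1$, i.e.\ $n+k/2\equiv 0\mod 2$.

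For the point $\rho$ I would use $\gamma=\SM 0{-1}1{-1}\in\SL(2,\Z)$, which fixes $\rho$; here $cz+d=z-1$, so modularity gives $f(\gamma z)=(z-1)^kf(z)$. Substituting the expansion $f(z)=(1-w_\rho(z))^k\sum_n b_nw_\rho(z)^n$ and the lemma's identities $w_\rho(\gamma z)=e^{2\pi i/3}w_\rho(z)$ and $1-w_\rho(\gamma z)=e^{4\pi i/3}(z-1)(1-w_\rho(z))$, the same coefficient comparison gives $e^{4\pi ik/3}e^{2\pi in/3}b_n=b_n$, hence $b_n=0$ unless $e^{2\pi i(2k+n)/3}=1$, that is $2k+n\equiv 0\mod 3$. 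Finally, since $k$ is even and $4\equiv 1\mod 3$ we have $2k=4(k/2)\equiv k/2\mod 3$, so this last condition is equivalent to $n+k/2\equiv 0\mod 3$, as claimed.

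I do not expect a genuine obstacle here. The only points needing a little care are that $f$ is merely meromorphic, so its $w$-expansion at the elliptic point is a Laurent series with a finite principal part; but it still converges on a punctured neighborhood of the elliptic point, which legitimizes the term-by-term comparison above. The other mildly nontrivial point is the elementary congruence manipulation identifying $2k+n\equiv 0\mod 3$ with $n+k/2\equiv 0\mod 3$ (using that $k$ is even). Everything else is a direct substitution using the stated lemma.
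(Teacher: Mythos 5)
Your argument is correct and is essentially identical to the paper's proof: both apply the transformation formulas for $w_i$ and $w_\rho$ from the preceding lemma under the stabilizing elements $S$ and $\SM0{-1}1{-1}$, and compare coefficients of the resulting expansions (the paper writes out only the $\rho$ case and omits the case of $i$ as similar). Your explicit verification of the congruence $2k\equiv k/2\pmod 3$ for even $k$ is a correct filling-in of a step the paper leaves implicit.
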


\begin{proof} Here we will only prove the case of $\rho$.
  Let $\gamma=\SM0{-1}1{-1}$. Since $f(z)$
  is a meromorphic modular form of weight $k$ on $\SL(2,\Z)$,
  we have
  $$
  f(\gamma z)=(z-1)^kf(z)
  =(z-1)^k(1-w_\rho(z))^k\sum_{n=n_0}^\infty b_nw_\rho(z)^n
  $$
  On the other hand, by the lemma above, we have
  $$
  f(\gamma z)=e^{4\pi ik/3}(z-1)^k(1-w_\rho(z))^k
  \sum_{n=n_0}^\infty b_ne^{2\pi in/3}w_\rho(z)^n.
  $$
  Comparing the two expressions, we conclude that $b_n=0$ whenever
  $n+k/2\not\equiv0\mod 3$.
\end{proof}

To determine local exponents of modular differential equations at
$\rho$ and $i$, we need to know the leading terms of the expansions of
$E_6(z)^2/E_4(z)^2$ and $E_4(z)^4/E_6(z)^2$.

\begin{Lemma} \label{lemma: leading terms of E4 E6}
  \begin{enumerate}
    \item[(a)] Let
  $$
  w_\rho=w_\rho(z)=\frac{z-\rho}{z-\overline\rho}.
  $$
  Then we have
  $$
  \pi^2\frac{E_6(z)^2}{E_4(z)^2}
  =(1-w_\rho^4)\left(\frac34w_\rho^{-2}+\sum_{n=1}^\infty a_nw_\rho^n\right)
  $$
  for some complex numbers $a_n$ such that $a_n=0$ whenever
  $n\not\equiv 1\mod 3$.
\item[(b)] Let
  $$
  w_i=w_i(z)=\frac{z-i}{z+i}.
  $$
  Then
  $$
  \pi^2\frac{E_4(z)^4}{E_6(z)^2}=(1-w_i)^4
  \left(\frac14w_i^{-2}+\sum_{n=0}^\infty b_nw_i^n\right)
  $$
  for some complex numbers $b_n$ such that $a_n=0$ whenever
  $n\not\equiv0\mod 2$.
\end{enumerate}
\end{Lemma}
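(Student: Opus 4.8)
The plan is to compute the leading terms of the power series expansions of $E_4(z)$ and $E_6(z)$ at $\rho$ and at $i$, and then combine them. The key inputs are Proposition \ref{proposition: power series coefficients}, which expresses the coefficients in terms of Shimura--Maass derivatives, and the vanishing pattern in Corollary \ref{corollary: expansion of f at i}, which controls which coefficients can be nonzero.

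For part (a): since $E_4(\rho)=0$ and $\rho$ is a simple zero of $E_4$ (as $E_4$ has a single zero in the fundamental domain, at $\rho$, counted with weight $1/e_\rho=1/3$ on the modular curve, i.e.\ order $1$ as a function of $z$), the power series expansion of $E_4$ at $\rho$ has the shape $E_4(z)=(1-w_\rho)^4\bigl(c_1 w_\rho+c_4w_\rho^4+\cdots\bigr)$ with $c_1\neq 0$, where by Corollary \ref{corollary: expansion of f at i} (with $k=4$, so $n+2\equiv 0\bmod 3$, i.e.\ $n\equiv 1\bmod 3$) only exponents $n\equiv 1\pmod 3$ occur. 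Meanwhile $E_6(\rho)\neq 0$, and the expansion of $E_6$ at $\rho$ (with $k=6$, so $n+3\equiv 0\bmod 3$, i.e.\ $n\equiv 0\bmod 3$) has the shape $E_6(z)=(1-w_\rho)^6\bigl(d_0+d_3w_\rho^3+\cdots\bigr)$ with $d_0=E_6(\rho)\neq 0$. Therefore
$$
\pi^2\frac{E_6(z)^2}{E_4(z)^2}=(1-w_\rho)^{12-8}\cdot\frac{\pi^2\bigl(d_0+\cdots\bigr)^2}{\bigl(c_1 w_\rho+\cdots\bigr)^2}=(1-w_\rho)^4\Bigl(\frac{\pi^2 d_0^2}{c_1^2}\,w_\rho^{-2}+\cdots\Bigr),
$$
and the set of exponents appearing is a subset of $\{-2\}\cup\{n:n\equiv 1\bmod 3\}$ (the ratio of a series in exponents $\equiv 0$ by a series in exponents $\equiv 1$, squared, gives exponents $\equiv -2\equiv 1\bmod 3$). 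This matches the claimed shape $(1-w_\rho^4)\bigl(\tfrac34 w_\rho^{-2}+\sum a_n w_\rho^n\bigr)$ once we identify the leading coefficient. Part (b) is entirely parallel: $E_6$ has a simple zero at $i$ and $E_4(i)\neq 0$; with $k=4$ one gets exponents $n\equiv 0\bmod 2$ in $E_4$, with $k=6$ exponents $n\equiv 1\bmod 2$ in $E_6$, hence $E_6(z)=(1-w_i)^6(e_1w_i+\cdots)$ with $e_1\neq 0$, and $\pi^2 E_4^4/E_6^2=(1-w_i)^{16-12}\bigl(\tfrac14 w_i^{-2}+\cdots\bigr)$ with only even exponents $n$ beyond the pole.

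It remains to pin down the two leading constants, $3/4$ and $1/4$. The cleanest route is to use the standard normalization $E_4^3-E_6^2=1728\Delta$ together with the known behavior of $\Delta$. Near $\rho$: since $E_4$ vanishes to order $1$, $E_4^3$ vanishes to order $3$, so $-E_6(\rho)^2=1728\,\Delta(\rho)\cdot(\text{unit})$ at order $0$; more useful is the identity $j'=-2\pi i E_2 E_4^2 E_6/\Delta$ and the relation between $\{h,z\}$ computations, but most directly one can use that $E_6(z)^2/E_4(z)^2 = E_6(z)^2/(c_1^2 w_\rho^2)\cdot(1-w_\rho)^{-8}+\cdots$ and compute $c_1$ from $b_1=(\partial_4 E_4)(\rho)(-4\pi\Im\rho)$ via Proposition \ref{proposition: power series coefficients}, using $\partial_4 E_4=-\tfrac13 E_6$ (the Ramanujan--Serre derivative identity $\partial_4 E_4 = -E_6/3$, in the normalization $\partial_k=\tfrac1{2\pi i}(d/dz)+k/(4\pi\Im z)\cdot(\cdot)$), so $c_1 = (\partial_4E_4)(\rho)(-4\pi\Im\rho) = -\tfrac13 E_6(\rho)(-4\pi\cdot\tfrac{\sqrt3}{2}) = \tfrac{2\pi\sqrt3}{3}E_6(\rho)$. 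Then $\pi^2 d_0^2/c_1^2 = \pi^2 E_6(\rho)^2/(\tfrac{4\pi^2\cdot 3}{9}E_6(\rho)^2) = 9/(4\cdot 3) = 3/4$, exactly as claimed. For part (b), use $\partial_6 E_6 = -\tfrac12 E_4^2$, so $e_1 = (\partial_6E_6)(i)(-4\pi\Im i) = -\tfrac12 E_4(i)^2(-4\pi) = 2\pi E_4(i)^2$, giving $\pi^2 E_4(i)^8/e_1^2\cdot$ (leading) $= \pi^2 E_4(i)^8/(4\pi^2 E_4(i)^4)\cdot E_4(i)^{-?}$; tracking the powers carefully, $\pi^2 E_4^4/E_6^2$ has leading coefficient $\pi^2 E_4(i)^4/e_1^2 = \pi^2 E_4(i)^4/(4\pi^2 E_4(i)^4) = 1/4$.

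The main obstacle is bookkeeping: keeping the factors of $(1-w)$ straight (the weight shifts $4\mapsto 3$ in the $d^2/dz^2$ formula versus the $(1-w)^k$ prefactor in the expansions), and correctly propagating the congruence classes of exponents through products, squares, and quotients to justify the stated vanishing ($a_n=0$ for $n\not\equiv 1\bmod 3$ in (a), $b_n=0$ for $n\not\equiv 0\bmod 2$ in (b)). The congruence propagation is clean once one observes that multiplying/dividing series whose exponents lie in a fixed coset of $3\Z$ (resp.\ $2\Z$) yields a series with exponents in the sum/difference coset; and that $(1-w_\rho^4)=(1-w_\rho)(1+w_\rho+w_\rho^2+w_\rho^3)$ matches $(1-w_\rho)^4$ only up to regrouping, so one should present the final answer in whichever of the two equivalent forms the subsequent sections actually use. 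No serious analytic difficulty arises; the only ``theorem-level'' input is Proposition \ref{proposition: power series coefficients} together with the elementary Ramanujan--Serre derivative identities, both of which are available.
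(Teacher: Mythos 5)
Your proposal is correct and follows essentially the same route as the paper: both arguments rest on the simple zero of $E_4$ at $\rho$ (resp.\ of $E_6$ at $i$), the congruence constraints of Corollary \ref{corollary: expansion of f at i}, and Ramanujan's identities to pin down the leading coefficients $3/4$ and $1/4$ (the paper extracts the leading coefficient by computing $\lim_{z\to\rho}w_\rho(z)E_6(z)/E_4(z)$ directly rather than via Proposition \ref{proposition: power series coefficients}, but this is the same calculation in different clothing). One small caution: the identity $\partial_4E_4=-E_6/3$ is not valid for the Shimura--Maass derivative as defined in the paper (the correct right-hand side involves the non-holomorphic $E_2^\ast$), but since you only evaluate at $\rho$, where $E_4$ vanishes, the correction term drops out and your value of $c_1$ is right; likewise you correctly observe that the printed prefactor $(1-w_\rho^4)$ should read $(1-w_\rho)^4$, as in the rest of the paper.
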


\begin{proof} It is known that, as an analytic function on $\H$,
  $E_4(z)$ has a simple zero at $\rho$. Also, $E_6(\rho)\neq 0$. Thus,
  by Corollary \ref{corollary: expansion of f at i},
  $$
  \pi^2\frac{E_6(z)^2}{E_4(z)^2}=(1-w_\rho)^4\left(
    a_{-2}w_\rho^{-2}+\sum_{n=1}^\infty a_nw_\rho^n\right)
  $$
  for some complex numbers $a_n$ such that $a_n=0$ whenever
  $n\not\equiv1\mod 3$. To determine the leading coefficient $a_{-2}$,
  we use the well-known Ramanujan's identity
  $$
  \frac1{2\pi i}E_4'(z)=\frac{E_2(z)E_4(z)-E_6(z)}3,
  $$
  where $E_2(z)$ is the Eisenstein series of weight $2$ on $\SL(2,\Z)$
  (see \cite[Proposition 15]{Zagier123}). Hence,
  \begin{equation*}
    \begin{split}
    \lim_{z\to\rho}w_\rho(z)\frac{E_6(z)}{E_4(z)}
    &=\frac{E_6(\rho)}{\rho-\overline\rho}\lim_{z\to\rho}
    \frac{z-\rho}{E_4(z)}=\frac{E_6(\rho)}{\sqrt3i}\frac1{E'_4(\rho)}\\
    &=-\frac{E_6(\rho)}{2\pi\sqrt3}\frac3{E_2(\rho)E_4(\rho)-E_6(\rho)}
    =\frac{\sqrt 3}{2\pi},
    \end{split}
  \end{equation*}
  which implies that $a_{-2}=3/4$. This proves Part (a).

  The proof of Part (b) is similar. We use another identity
  $$
  \frac1{2\pi i}E_6'(z)=\frac{E_2(z)E_6(z)-E_4(z)^2}2
  $$
  of Ramanujan's to conclude that the leading term of
  $\pi^2E_4(z)^4/E_6(z)^2$ is $w_i^{-2}/4$. We omit the details.
\end{proof}


\begin{Corollary} \label{corollary: indicial}
  The local exponents of the modular differential equation
  \eqref{equation: DE 3} at $\rho$ and at $i$ are roots of
  $$
  x^2-x+\frac 94s=0
  $$
  and
  $$
  x^2-x+t=0,
  $$
  respectively.
\end{Corollary}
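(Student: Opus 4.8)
The plan is to rewrite \eqref{equation: DE 3} in the local coordinate $w = w_{z_0}(z) = (z-z_0)/(z-\overline z_0)$ at the singular point $z_0 \in \{\rho, i\}$ and then read off the indicial polynomial by the Frobenius method, using Lemma \ref{lemma: df/dw} to control $y''$ and Lemma \ref{lemma: leading terms of E4 E6} to control the leading Laurent term of the potential. Concretely, I would substitute the Frobenius ansatz $y(z) = (1-w)^{-1}w^\alpha\sum_{n\ge 0}c_nw^n$ with $c_0\ne 0$ into the equation, cancel the common power of $1-w$ from both sides, and compare coefficients of the lowest power $w^{\alpha-2}$.

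First take $z_0 = \rho$. Since $E_4$ has a simple zero at $\rho$ while $E_6(\rho)\ne 0$, the only term of $Q_3(z;r,s,t)$ that is singular at $\rho$ is $s\,E_6^2/E_4^2$, which has a double pole there; the terms $rE_4$ and $tE_4^4/E_6^2$ are holomorphic (indeed vanishing) at $\rho$, so in the expansion $(1-w_\rho)^4\sum_n(\cdots)w_\rho^n$ they only contribute terms of order $\ge w_\rho^1$ and hence do not affect the coefficient of $w_\rho^{-2}$. Thus Lemma \ref{lemma: leading terms of E4 E6}(a) gives
$$Q_3(z) = (1-w_\rho)^4\left(\frac34 s\,w_\rho^{-2} + \sum_{n\ge 1}A_nw_\rho^n\right)$$
for suitable constants $A_n$. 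Substituting the ansatz and using Lemma \ref{lemma: df/dw}, the equation $y'' = Q_3y$ becomes, after cancelling the factor $(1-w_\rho)^3$ common to both sides,
$$\frac1{(\rho-\overline\rho)^2}\sum_{n\ge 0}c_n(\alpha+n)(\alpha+n-1)w_\rho^{\alpha+n-2} = \left(\frac34 s\,w_\rho^{-2} + \cdots\right)\sum_{n\ge 0}c_nw_\rho^{\alpha+n}.$$
Comparing the coefficient of $w_\rho^{\alpha-2}$ and using $(\rho-\overline\rho)^2 = (\sqrt{-3})^2 = -3$ yields $\alpha(\alpha-1) = -\frac94 s$, i.e.\ $\alpha^2-\alpha+\frac94 s = 0$. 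The case $z_0 = i$ is handled in exactly the same way: now $E_6$ has a simple zero at $i$ while $E_4(i)\ne 0$, so $tE_4^4/E_6^2$ is the singular term, Lemma \ref{lemma: leading terms of E4 E6}(b) supplies the leading coefficient $\frac14 t$, and since $(i-\overline i)^2 = (2i)^2 = -4$ the same comparison gives $\alpha^2-\alpha+t = 0$.

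I do not expect a real obstacle here; essentially all the substance is already in the two preceding lemmas (the coordinate change for $y''$ and the computation of the leading Laurent coefficients of $\pi^2E_6^2/E_4^2$ and $\pi^2E_4^4/E_6^2$). The only points needing a little care are (i) confirming that the regular part of $Q_3$ does not reach down to order $w^{-2}$, which is immediate from the known vanishing orders of $E_4$ at $\rho$ and $E_6$ at $i$, and (ii) carrying the scalar $(z_0-\overline z_0)^2$ correctly through the change of variable, since it is precisely what converts the leading coefficient $\frac34 s$ into $\frac94 s$ and $\frac14 t$ into $t$.
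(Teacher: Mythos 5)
Your proposal is correct and follows essentially the same route as the paper: a Frobenius ansatz in the coordinate $w=w_{z_0}(z)$, with Lemma \ref{lemma: df/dw} for $y''$ and Lemma \ref{lemma: leading terms of E4 E6} for the leading Laurent coefficients, then comparison of the $w^{\alpha-2}$ terms using $(\rho-\overline\rho)^2=-3$ and $(i-\overline i)^2=-4$. The paper only writes out the case of $\rho$ and declares the case of $i$ similar, but your treatment of both points matches its argument exactly.
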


\begin{proof} Here we prove only the case of $\rho$; the proof of the
  case of $i$ is similar.

  Let $w=w_\rho(z)=(z-\rho)/(z-\overline\rho)$. Assume that
  $$
  y(z)=\frac1{1-w}\sum_{n=0}^\infty a_nw^{\alpha+n}, \quad a_0\neq 0,
  $$
  is a solution of \eqref{equation: DE 3}. By Lemmas \ref{lemma:
    leading terms of E4 E6} and \ref{lemma: df/dw}, we have
  $$
  y''(z)=-\frac{(1-w)^3}3\left(\alpha(\alpha-1)a_0w^{\alpha-2}+\cdots\right)
  $$
  while
  \begin{equation*}
    \begin{split}
  &\pi^2\left(rE_4(z)+s\frac{E_6(z)^2}{E_4(z)^2}+t\frac{E_4(z)^4}{E_6(z)^2}
  \right)y(z) \\
  &\qquad=(1-w)^3\left(\frac34sa_0w^{\alpha-2}+\cdots\right).
   \end{split}
  \end{equation*}
  Comparing the leading terms, we see that the exponent $\alpha$
  satisfies $\alpha^2-\alpha+9s/4=0$.
\end{proof}


We are now ready to prove Part (a) of Theorem \ref{theorem: 1.5}.

\begin{proof}[Proof of Theorem \ref{theorem: 1.5}(a)]
  By Proposition \ref{proposition: same exponents}, we only need to
  determine when \eqref{equation: DE in introduction} is apparent at
  $\rho$.
  
  Let $\kappa_\rho\in\frac12\N$ and set
  $s=s_{\kappa_\rho}=(1-4\kappa_\rho)/9$ so that the local exponents
   of the modular differential equation \eqref{equation: DE in
     introduction} with $s=s_{\kappa_\rho}$, i.e.,
   \begin{equation} \label{equation: sk}
   y''(z)=\pi^2\left(rE_4(z)+s_{\kappa_\rho}
     \frac{E_6(z)^2}{E_4(z)^2}\right)y(z)
   \end{equation}
   at $\rho$ are $1/2\pm\kappa_\rho$, by Corollary \ref{corollary: indicial}.

   Let $w=w_\rho(z)=(z-\rho)/(z-\overline\rho)$. According to
   Corollary \ref{corollary: expansion of f at i} and
   Lemma \ref{lemma: leading terms of E4 E6}, we have
  \begin{equation} \label{equation: E4}
  \pi^2E_4(z)=(1-w)^4\sum_{n=1}^\infty a_nw^n,
  \end{equation}
  and
  \begin{equation} \label{equation: E6/E4}
  \pi^2\frac{E_6(z)^2}{E_4(z)^2}
  =(1-w)^4\left(\frac34w^{-2}+
   \sum_{n=1}^\infty b_n
   w^n\right),
  \end{equation}
  where $a_n$ and $b_n$ are complex numbers satisfying
  \begin{equation}\label{equation: conditions for an, bn not zero}
  a_n=b_n=0\quad \text{if}\quad n\not\equiv 1\mod 3.
  \end{equation}
  We also remark that $a_1\neq 0$ since the zero $\rho$ of $E_4(z)$,
  as a holomorphic function on $\H$, is simple.

  Now the differential equation \eqref{equation: sk} is apparent at
  $\rho$ if and only if it has a solution of the form
  \begin{equation*} \label{equation: f in w}
  y(z)=\frac1{1-w}w^{1/2-\kappa_\rho}\sum_{n=0}^\infty c_n
  w^n\quad \text{with}\ c_0=1.
  \end{equation*}
  Plugging this series into \eqref{equation: sk} and using Lemma
  \ref{lemma: df/dw}, \eqref{equation: E4}, and \eqref{equation:
    E6/E4}, we find that the coefficients $c_n$ need to satisfy
 \begin{equation}\label{equation: recursive 1}
  n\left(n-2\kappa_\rho\right)c_n=
  -3\sum_{j=0}^{n-2}c_j(ra_{n-j-2} +s_{\kappa_\rho}b_{n-j-2}).
  \end{equation}
  Due to \eqref{equation: conditions for an, bn not zero} and
  \eqref{equation: recursive 1}, we can inductively prove that
  \begin{equation}\label{equation: condition for cn}
  c_n=0\quad\text{if}\ n\not\equiv 0\mod 3.
  \end{equation}
  Since the left-hand side of \eqref{equation: recursive 1} vanishes
  when $n=2\kappa_\rho$, \eqref{equation: sk} is apparent at
  $\rho$ if and only if  
  \begin{equation} \label{equation: consistency}
    \sum_{j=0}^{2\kappa_\rho-2}c_j( ra_{2\kappa_\rho-j-2}
    +  s_{\kappa_\rho}b_{2\kappa_\rho-j-2})=0.
  \end{equation}

  Suppose $3\nmid 2\kappa_\rho$. Then,
  $j\equiv 0\mod 3$ and $2\kappa_\rho-j-2\equiv 1\mod 3$ cannot hold
  simultaneously. Hence, by \eqref{equation: conditions for an, bn not
    zero} and \eqref{equation: condition for cn}, the condition
  \eqref{equation: consistency} always holds for any $r$, i.e.,
  \eqref{equation: sk} is apparent at $\rho$ for any $r$. This proves (a).

  For the case $3|2\kappa_\rho$, considering $r$ as an indeterminate
  and using \eqref{equation: recursive 1} to recursively express $c_n$ as
  polynomials in $r$, we find that $c_n$ is a polynomial of
  degree exactly $n/3$ in $r$ when $3|n$ and $n<2\kappa_\rho$.
  (Note that we use the fact that $a_1\neq 0$ to conclude that the
  degree is $n/3$.) Thus, the left-hand side of \eqref{equation:
    consistency} is a polynomial $P(r)$ of degree $2\kappa_\rho/3$ in
  $r$ and \eqref{equation: sk} is apparent at $\rho$ if and only if
  $r$ is a root of this polynomial $P(x)$. This proves Part (b)
  except the identity \eqref{equation: r+s=-(l+1/2)^2}.
 \end{proof}

The proof of Theorem \ref{theorem: 1.7}(a) except \eqref{equation: r+t=-(l+1/3)^2} is very similar to that of
Theorem \ref{theorem: 1.5} and will be omitted.

\section{ Riemann's existence theorem and its application. }
In this section, we will use Riemann's existence theorem to prove
Theorems \ref{theorem: necessary and sufficient conditions for appartness at infinity}, \ref{theorem: 1.5}(b), and \ref{theorem: 1.7}(b). The
basic idea is as follows.

Let $h(z)$ be a modular function on some subgroup $\Gamma$ of finite
index of $\SL(2,\Z)$. A simple computation
shows that both $y_1(z)=1/\sqrt{h'(z)}$ and $y_2(z)=h(z)/\sqrt{h'(z)}$
are solutions of
$$
y''(z)=Q(z)y(z), \qquad Q(z)=-\frac12\{h(z),z\},
$$
where $\{h(z),z\}$ is the Schwarz derivative. Using either properties
of Schwarz derivatives or direct computation, we can verify that
$\{h(z),z\}$ is a meromorphic modular form of weight $4$ on $\Gamma$.
When $h(z)$ has additional symmetry, $\{h(z),z\}$ can be modular on
a larger group. Note that, by construction, this differential equation
$y''(z)=Q(z)y(z)$ is apparent on $\H$. Thus, one way to prove the
theorems is simply to prove the existence of a modular function
$h(z)$ such that $-\{h(z),z\}/2=Q(z)$ for each $Q(z)$ appearing in the
theorems. To achieve this, we will use Riemann's existence theorem.

Since some of the readers may not be familiar with Riemann's existence
theorem, here we give a quick overview of this important result in the
theory of Riemann surfaces. The exposition follows \cite[Chapter
III]{Miranda}.

Let $F:X\to Y$ be a (branched) covering of compact Riemann surfaces of
degree $d$. A point $y$ of $Y$ is a \emph{branch point} if the
cardinality of $F^{-1}(y)$ is not $d$ and a point $x$ of $X$ is a
\emph{ramification point} if $F$ is not locally one-to-one near $x$.
(In particular, $F(x)$ is a branch point.)
Let $B$ be the (finite) set of branch points on $Y$ under $F$. Pick a
point $y_0\in Y-B$ so that $F^{-1}(y_0)$ has $d$ points, say
$x_1,\ldots,x_d$. Every loop $\gamma$ in $Y-B$ based at $y_0$ can be
lifted to $d$ paths $\widetilde\gamma_1,\ldots,\widetilde\gamma_d$
with $\widetilde\gamma_j(0)=x_j$ and $\widetilde\gamma_j(1)=x_{j'}$
for some $x_{j'}$. The map $j\mapsto j'$ is then a permutation in
$S_d$. The permutation depends only on the homotopy class of
$\gamma$. In this way, we get a monodromy representation
$$
\rho:\pi_1(Y-B,y_0)\to S_d.
$$
Note that since $F^{-1}(Y-B)$ is connected, the image of $\rho$ is a
transitive subgroup of $S_d$. Also, let $b\in B$ and $a_1,\ldots,a_k$
be the points in $F^{-1}(b)$ with ramification indices
$m_1,\ldots,m_k$, respectively. We can show that if $\gamma$ is a
small loop in $Y-B$ around $b$ based at $y_0$, then $\rho(\gamma)$ is
a product of disjoint cycles of lengths $m_1,\ldots,m_k$.

To state the version of Riemann's existence theorem used in the paper,
let us consider the case $Y=\P^1(\C)$. Let $B=\{b_1,\ldots,b_n\}$ be the 
set of branch points of $F:X\to\P^1(\C)$. Let $\gamma_j$,
$j=1,\ldots,n$, be loops that circles $b_j$ once but no other branch points.
Then $\pi_1(\P^1(\C)-B,y_0)$ is generated by the homotopy classes
$[\gamma_j]$, subject to a single relation
$[\gamma_1]\ldots[\gamma_n]=1$ (with a suitable ordering of the points
$b_j$). Thus, the image of $\rho$ is generated by
$\sigma_j=\rho(\gamma_j)$ satisfying the relation
$\sigma_1\cdots\sigma_n=1$. Then Riemann's existence theorem states
as follows (see \cite[Corollary 4.10]{Miranda}).

\begin{theorem}[Riemann's existence theorem]
  Let $B=\{b_1,\ldots,b_n\}$ be a finite subset of $\P^1(\C)$. Then
  there exists a one-to-one correspondence between the set of
  isomorphism classes of coverings $F:X\to\P^1(\C)$ of compact Riemann
  surfaces of degree $d$ whose branch points lie in $B$ and the set of
  (simultaneous) conjugacy classes of $n$-tuples
  $(\sigma_1,\ldots,\sigma_n)$ of permutations in $S_d$ such that
  $\sigma_1\ldots\sigma_n=1$ and the group generated by the
  $\sigma_j$'s is transitive.

  Moreover, if the disjoint cycle decomposition of $\sigma_j$ is a
  product of $k$ cycles of lengths $m_1,\ldots,m_k$, then
  $F^{-1}(b_j)$ has $k$ points with ramification indices
  $m_1,\ldots,m_k$, respectively.
\end{theorem}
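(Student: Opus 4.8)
The plan is to prove the theorem by exhibiting the bijection in both directions and then reading off the ``moreover'' clause from the local picture. Set $Y=\P^1(\C)$ and $B=\{b_1,\ldots,b_n\}$. For the forward direction, start with a branched covering $F:X\to Y$ of degree $d$ whose branch points lie in $B$, and restrict it to the \emph{unbranched} covering $F^\circ\colon F^{-1}(Y-B)\to Y-B$. Since $y_0\notin B$, the fiber $F^{-1}(y_0)=\{x_1,\ldots,x_d\}$ has exactly $d$ points, and path-lifting along loops in $Y-B$ produces the monodromy action of $\pi_1(Y-B,y_0)$ on this fiber; fixing the labelling $x_j\leftrightarrow j$ turns it into a homomorphism $\rho\colon\pi_1(Y-B,y_0)\to S_d$. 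Using the presentation of $\pi_1(\P^1(\C)-B,y_0)$ by generators $[\gamma_1],\ldots,[\gamma_n]$ subject to the single relation $[\gamma_1]\cdots[\gamma_n]=1$, the homomorphism $\rho$ is completely encoded by the tuple $(\sigma_1,\ldots,\sigma_n)$ with $\sigma_j=\rho([\gamma_j])$, which therefore satisfies $\sigma_1\cdots\sigma_n=1$; connectedness of $X$ (hence of $F^{-1}(Y-B)$) is equivalent to transitivity of $\langle\sigma_1,\ldots,\sigma_n\rangle$. Changing the labelling of the fiber, or moving the base point along a path, replaces the tuple by a simultaneous conjugate, and an isomorphism of coverings is exactly a fiber-preserving relabelling, so the map on (isomorphism) classes into (conjugacy) classes is well defined.

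For the inverse direction, given a transitive tuple $(\sigma_1,\ldots,\sigma_n)$ with $\prod_j\sigma_j=1$, the associated transitive action of $\pi_1(Y-B,y_0)$ on $\{1,\ldots,d\}$ corresponds, by the classification of covering spaces, to a connected degree-$d$ covering $p\colon X^\circ\to Y-B$. The remaining work is to compactify. Choose for each $b_j$ a small coordinate disk with $b_j$ removed, $D_j^\times\subset Y-B$, homeomorphic to a punctured disk; then $p^{-1}(D_j^\times)$ is a disjoint union of connected coverings of $D_j^\times$, and each connected covering of a punctured disk is isomorphic to the standard one $w\mapsto w^m$ for some $m\ge 1$, where the integers $m$ occurring are precisely the lengths of the disjoint cycles of the local monodromy $\sigma_j$. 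Filling in one point over $b_j$ in each such component and declaring the coordinate of the local model (i.e.\ $w^{1/m}$ in the model $w\mapsto w^m$) to be holomorphic produces a surface $X$ together with a finite map $F\colon X\to\P^1(\C)$: away from the ramification points $F$ is a local homeomorphism and we pull back the complex structure of $\P^1(\C)$, while at a ramification point we use the local model $z\mapsto z^m$, and these charts are holomorphically compatible. One checks $X$ is compact and Hausdorff, so it is a compact Riemann surface and $F$ is holomorphic of degree $d$. Running the forward construction on this $F$ returns $(\sigma_1,\ldots,\sigma_n)$ up to conjugacy, and the two assignments are mutually inverse on the level of classes.

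The ``moreover'' statement falls out of the same local analysis: the points of $F^{-1}(b_j)$ are in bijection with the orbits of $\langle\sigma_j\rangle$ on $\{1,\ldots,d\}$ (equivalently with the connected components of $p^{-1}(D_j^\times)$), and for the component attached to an orbit of size $m$ the local model $w\mapsto w^m$ shows that the point inserted over $b_j$ has ramification index $m$. Hence if $\sigma_j$ is a product of disjoint cycles of lengths $m_1,\ldots,m_k$, then $F^{-1}(b_j)$ has $k$ points with ramification indices $m_1,\ldots,m_k$.

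The step I expect to be the main obstacle is the compactification in the inverse direction, namely verifying that gluing the local models $w\mapsto w^m$ onto the pulled-back complex structure on $X^\circ$ really yields a \emph{compact Hausdorff} Riemann surface on which $F$ is holomorphic; the analytic content of ``Riemann's existence theorem'' is concentrated there. Everything else --- covering-space theory over the $n$-punctured sphere, the explicit presentation of $\pi_1(\P^1(\C)-B,y_0)$, and the translation between choices of labelling and simultaneous conjugation --- is standard and essentially formal.
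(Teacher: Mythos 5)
The paper does not prove this statement: it is quoted verbatim as a classical result, with a citation to Miranda (Corollary 4.10 of Chapter III), and the paper's own text only sketches the forward direction (the monodromy representation of the unbranched covering over $Y-B$) as expository background. Your proposal is precisely the standard proof that Miranda gives and that the paper's exposition presupposes: monodromy of the restricted covering plus the presentation of $\pi_1(\P^1(\C)-B,y_0)$ for one direction, and covering-space classification followed by filling in punctures via the local models $w\mapsto w^m$ for the other. The outline is correct, and you correctly identify where the real analytic work sits (showing the glued surface is a compact Hausdorff Riemann surface on which $F$ is holomorphic and proper); the only other detail worth making explicit for a complete argument is that an isomorphism of the unbranched parts extends uniquely across the filled-in points (a removable-singularity argument), which is what makes the two assignments mutually inverse on isomorphism classes rather than merely on the unbranched loci. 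Since the paper offers no proof to compare against, there is nothing further to reconcile.
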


We now use this result to prove Theorems \ref{theorem: necessary and sufficient conditions for appartness at infinity}, \ref{theorem:
  1.5}(b), and \ref{theorem: 1.7}(b). Since the proofs are similar, we will
provide details only for Theorem \ref{theorem: 1.5}(b).

\begin{proof}[Proof of Theorem \ref{theorem: 1.5}(b)]
  Assume that $3|2\kappa_\rho$. Let $\Gamma_2$ be the subgroup of
  index of $2$ of $\SL(2,\Z)$ generated by
  $$
  \gamma_1=\M1{-1}10, \qquad\gamma_2=\M01{-1}{-1}.
  $$
  Note that
  $$
  \gamma_1\gamma_2=\M1201.
  $$
  The group $\Gamma_2$ has a cusp $\infty$ and two elliptic points
  $\rho_1=(1+\sqrt{-3})/2$ and $\rho_2=(-1+\sqrt{-3})/2$ of order $3$,
  fixed by $\gamma_1$ and $\gamma_2$, respectively. Let
  $$
  j_2(z)=\frac{E_6(z)}{\eta(z)^{12}},
  $$
  which is a Hauptmodul for $\Gamma_2$, and set
  $$
  J_2(z)=\frac{24}{j_2(z)}.
  $$
  We have $J_2(\infty)=0$, $J_2(\rho_1)=1/\sqrt{-3}$, and
  $J_2(\rho_2)=-1/\sqrt{-3}$.

  Set $\ell_0=2\kappa_\rho/3$. We first show that for each
  $\ell\in\{0,\ldots,\ell_0-1\}$, there exists a modular function $h(z)$
  on $\Gamma_2$ such that the covering $h:X(\Gamma_2)\to\mathbb P^1(\C)$
  of compact Riemann surfaces is ramified precisely 
  at $\infty$, $\rho_1$, and $\rho_2$ with 
  ramification index $2\ell+1$, $\ell_0$, and $\ell_0$, respectively.
  Note that by the Riemann-Hurwitz formula, such a covering has degree
  $\ell_0+\ell$, i.e., such a modular function $h(z)$ will be a rational
  function of degree $\ell_0+\ell$ in $J_2(z)$.

  Consider the two $\ell_0$-cycles
  $$
  \sigma_1=(1,\ldots,\ell_0), \qquad
  \sigma_2=(\ell_0+\ell,\ell_0+\ell-1,\ldots,\ell+1)
  $$
  in the symmetric group $S_{\ell_0+\ell}$. Since $\ell<\ell_0$, we
  have
  $$
  \sigma_2\sigma_1=(1,\ldots,\ell,\ell_0+\ell,\ell_0+\ell-1,
  \ldots,\ell_0),
  $$
  which is a $(2\ell+1)$-cycle. (Notice that if $\ell\ge\ell_0$, then
  $\sigma_1$ and $\sigma_2$ are disjoint.) It is clear that when
  $\ell<\ell_0$, the subgroup generated by $\sigma_1$ and $\sigma_2$
  is a transitive subgroup of $S_{\ell_0+\ell}$. Thus, by Riemann's
  existence theorem, there exists a covering of compact Riemann
  surfaces $H:X\to\mathbb P^1(\C)$ of degree $\ell_0+\ell$ ramified at
  three points $\zeta_1$, $\zeta_2$, and $\zeta_3$ of $\mathbb P^1(\C)$ with
  corresponding monodromy $\sigma_1$,
  $\sigma_2$, and $\sigma_1^{-1}\sigma_2^{-1}$, respectively. By the
  Riemann-Hurwitz formula, the genus of $X$ is $0$, and $H$ is a
  rational function from $\mathbb P^1(\C)$ to $\mathbb
  P^1(\C)$. Furthermore, by applying a suitable linear fractional
  transformation on the variable
  of $H$, we may assume that the three ramified points in
  $H^{-1}(z_j)$ are $0=J_2(\infty)$, $1/\sqrt{-3}=J_2(\rho_1)$, and
  $-1/\sqrt{-3}=J_2(\rho_2)$, respectively. Set $h(z)=H(J_2(z))$. Then
  $h(z)$ has the required properties that the only points of
  $X(\Gamma_2)$ ramified under $h:X(\Gamma_2)\to\mathbb P^1(\C)$ are
  $\rho_1$, $\rho_2$, and the cusp $\infty$ with ramified indices
  $\ell_0$, $\ell_0$, and $2\ell+1$, respectively.

  Now consider the Schwarz derivative $\{h(z),z\}$, which is a
  meromorphic modular form of weight $4$ on $\Gamma_2$. We claim
  that it is in fact modular on the bigger group $\SL(2,\Z)$.

  Indeed, to show $\{h(z),z\}$ is modular on $\SL(2,\Z)$, it suffices
  to prove that $\{h(z),z\}\big|T=\{h(z),z\}$, where $T=\SM1101$. Let
  $\widetilde h(z)=h(z+1)$. Now the automorphism on $X(\Gamma_2)$
  induced by $T$ interchanges $\rho_1$ and $\rho_2$. Thus, the
  ramification data of the covering
  $\widetilde h:X(\Gamma_2)\to\mathbb P^1(\C)$ is the same as that of $h$. By
  the Riemann's existence theorem, $h$ and $\wt h$ are related by a
  linear fractional transformation, i.e.,
  $\widetilde h=(ah+b)/(ch+d)$ for some $a,b,c,d\in\C$ with $ad-bc\neq
  0$. It follows that $\{h(z),z\}\big|T=\{h(z),z\}$ by the well-known
  property $\{(af(z)+b)/(cf(z)+d),z\}=\{f(z),z\}$ of the Schwarz
  derivative. This proves that $\{h(z),z\}$ is a meromorphic modular
  form of weight $4$ on the larger group $\SL(2,\Z)$.

  Furthermore, since $\rho_1$ is an elliptic point of order $3$, a local
  parameter for $\rho_1$ as a point on the compact Riemann surface
  $X(\Gamma_2)$ is $w^3$, where $w=(z-\rho)/(z-\overline\rho)$.
  Therefore, we have
  $$
  h(z)=d_0+\sum_{n=3\ell_0}^\infty d_nw^n,
  $$
  for some complex numbers $d_n$ with $d_{3\ell_0}\neq0$ and $d_n=0$
  whenever $3\nmid n$. For convenience, set
  \begin{equation*}
    \begin{split}
      A&=\sum_{n=3\ell_0}^\infty nd_nw^{n-1}, \\
      B&=\sum_{n=3\ell_0}^\infty n(n-1)d_nw^{n-2}, \\
      C&=\sum_{n=3\ell_0}^\infty n(n-1)(n-2)d_nw^{n-3}.
    \end{split}
  \end{equation*}
  Using \eqref{equation: w'}, we compute that
  \begin{equation*}
    \begin{split}
      h'(z)&=\frac{(1-w)^2}{\rho-\overline\rho}A, \\
      h''(z)&=\frac{(1-w)^4}{(\rho-\overline\rho)^2}B
      -2\frac{(1-w)^3}{(\rho-\overline\rho)^2}A, \\
      h'''(z)&=\frac{(1-w)^6}{(\rho-\overline\rho)^3}C
      -6\frac{(1-w)^5}{(\rho-\overline\rho)^3}B
      +6\frac{(1-w)^4}{(\rho-\overline\rho)^3}A,
    \end{split}
  \end{equation*}
  and hence
  $$
  \{h(z),z\}=\frac{(1-w)^4}{(\rho-\overline\rho)^2}\left(\frac CA
    -\frac32\frac{B^2}{A^2}\right)
  =-\frac{(1-w)^4}3\left(\frac{1-9\ell_0^2}{2w^2}+cw+\cdots\right)
  $$
  for some $c$. It follows that, by \eqref{equation: E6/E4},
  $$
  \{h(z),z\}+2\pi^2s_{\kappa_\rho}\frac{E_6(z)^2}{E_4(z)^2}, \qquad
  s_{\kappa_\rho}=\frac{1-4\kappa_\rho^2}9=\frac19-\ell_0^2,
  $$
  is a holomorphic modular form of weight $4$ on
  $\mathrm{SL}(2,\Z)$. By comparing the leading coefficients of the
  Fourier expansions at the cusp $\infty$, we conclude that,
  $$
  \{h(z),z\}=-2\pi^2\left(rE_4(z)+s_{\kappa_\rho}
    \frac{E_6(z)^2}{E_4(z)^2}\right),
  $$
  where $r=-(2\ell+1)^2/4-s_{\kappa_\rho}=\ell_0^2-(2\ell+1)^2/4-1/9$.
  Equivalently, $1/\sqrt{h'(z)}$ and $h(z)/\sqrt{h'(z)}$ are solutions
  of \eqref{equation: DE in introduction} which also implies that the
  singularity of \eqref{equation: DE in introduction} at $\rho$ is apparent.

  Finally, since we have found $\ell_0$ different $r$ such that
  \eqref{equation: DE in introduction} has an apparent singularity at
  $\rho$ for the given $s_{\kappa_\rho}$, by Part (a), this proves the theorem.
 \end{proof}

\begin{Example} For small $\kappa_\rho$, the modular functions $h(z)$
  appearing in the proof are given by 
   $$ \extrarowheight3pt
  \begin{array}{cccc} \hline\hline
    \kappa_\rho & \ell & (r,s) & h(z) \\ \hline
      \displaystyle\frac32 & 0 & \displaystyle
      \left(\frac{23}{36},-\frac89\right)\phantom{\Bigg|}
                               & J_2 \\
    3 & 0 & \displaystyle
    \left(\frac{131}{36},-\frac{35}9\right) \phantom{\Bigg|}
    & \displaystyle\frac{J_2}{1-3J_2^2} \\
    3 & 1 & \displaystyle
    \left(\frac{59}{36},-\frac{35}9\right) \phantom{\Bigg|}
    & \displaystyle\frac{J_2^3}{1+9J_2^2} \\
            \hline\hline
  \end{array}
  $$
\end{Example}

\begin{proof}[Proof of Theorem \ref{theorem: 1.7}(b)]
  Assume that $\kappa_i\in\N$. Let $\Gamma_3$ be the subgroup of index
  $3$ of $\mathrm{SL}(2,\Z)$ generated by
  $$
  \gamma_1=\M1{-2}1{-1}, \qquad \gamma_2=\M1{-1}2{-1}, \qquad
  \gamma_3=\M0{-1}10.
  $$
  We note that
  $$
  \gamma_1\gamma_2\gamma_3=\M1301.
  $$
  The group $\Gamma_3$ has one cusp and three elliptic points
  $z_1=1+i$, $z_2=(1+i)/2$, and $z_3=i$ of order $2$, fixed by
  $\gamma_j$, $j=1,2,3$, respectively.
  Let
  $$
  j_3(z)=\frac{E_4(z)}{\eta(z)^8}
  $$
  be a Hauptmodul for $\Gamma_3$ and set
  $$
  J_3(z)=12j_3(z)^{-1}.
  $$
  Note that $j_3(z)^3$ is equal to the elliptic $j$-function
  $j(z)$. Since $j(i)=1728$ and $j(\rho)=0$,
  we have $\{J_3(z_1),J_3(z_2),J_3(z_3)\}=\{1,e^{2\pi i/3},e^{4\pi
    i/3}\}$, $J_3(\rho)=\infty$, and $J_3(\infty)=0$.

  Consider the case $r+t_{\kappa_i}=-(\ell+1/3)^2$ first. Our goal
  here is to construct a modular function $h(z)$ on $\Gamma_3$, for
  each $\ell$ in the range, such that the covering
  $h:X(\Gamma_3)\to\P^1(\C)$ has degree
  $$
  d=\frac12(3\kappa_i+3\ell-1)
  $$
  and is ramified at precisely the cusp $\infty$ and the three
  elliptic points $z_1$, $z_2$, and $z_3$ with ramification indices
  $3\ell+1$, $\kappa_i$, $\kappa_i$, and $\kappa_i$, respectively.
  (Notice that $\kappa_i$ and $\ell$ have opposite parities, so $d$ is an
  integer.) Since the covering has four branch points, it is not easy
  to apply Riemann's existence theorem directly to get $h(z)$. Instead, we
  shall use the following idea.

  For convenience, set
  \begin{equation} \label{equation: m m'}
  m=\frac12(\kappa_i+\ell-1), \qquad
  m'=\frac12(\kappa_i-\ell-1).
  \end{equation}
  We claim that there exists a rational function $H(x)$ of degree $d$ in
  $x$ of the form
  $$
  H(x)=\frac{x^{3\ell+1}G(x)^3}{F(x)^3}, \quad
  \deg F(x)=m, \quad \deg G(x)=m',
  $$
  such that $xF(x)G(x)$ is squarefree and
  $$
  H(x)-1=\frac{(x-1)^{\kappa_i}L(x)}{F(x)^3}
  $$
  for some polynomial $L$ of degree $d-\kappa_i$ with no repeated
  roots. That is, $H(x)$ is a rational function such that
  \begin{enumerate}
    \item[(i)] the covering $H:\P^1(\C)\to\P^1(\C)$ branches at
      precisely $\infty$, $0$, and $1$ (note that by the
      Riemann-Hurwitz formula, $H$ cannot have other branch points),
    \item[(ii)] the monodromy $\sigma_\infty$ around $\infty$ is a
      product of $m$ disjoint $3$-cycles, the
      monodromy $\sigma_0$ around $0$ is a disjoint product of a
      $(3\ell+1)$-cycle and $m'$ $3$-cycles, and the monodromy
      $\sigma_1$ around $1$ is a $\kappa_i$-cycle,
    \item[(iii)] the unique unramified point in $H^{-1}(\infty)$ is
      $\infty$, the unique point of ramification index
      $3\ell+1$ in $H^{-1}(0)$ is $0$, and the unique ramified point
      in $H^{-1}(1)$ is $1$.
  \end{enumerate}
  Suppose that such a rational function $H(x)$ exists. We define
  $h:X(\Gamma_3)\to\P^1(\C)$ by
  $$
  h(z)=H(J_3(z)^3)^{1/3}=\frac{J_3(z)^{3\ell+1}G(J_3(z)^3)}{F(J_3(z)^3)}.
  $$
  From the construction, we see that $h$ ramifies only at $z_1=1+i$,
  $z_2=(1+i)/2$, $z_3=i$, and $\infty$ with ramification indices
  $\kappa_i$, $\kappa_i$, $\kappa_i$, and $3\ell+1$, respectively.
  Then following the proof of Theorem \ref{theorem: 1.5}(b), we can prove that
  the Schwarz derivative $\{h(z),z\}$ is a meromorphic modular form on
  the larger group $\SL(2,\Z)$ and that
  $$
  \{h(z),z\}=-2\pi^2\left(rE_4(z)+t_{\kappa_i}\frac{E_4(z)^4}{E_6(z)^2}
    \right), \quad r=-\left(\ell+\frac13\right)^2-t_{\kappa_i},
  $$
  which is equivalent to the assertion that $1/\sqrt{h'(z)}$ and
  $h(z)/\sqrt{h'(z)}$ are solutions of \eqref{equation: DE 2 in
    introduction} with $t=t_{\kappa_i}$ and
  $r=-(\ell+1/3)^2-t_{\kappa_i}$ and hence implies that
  \eqref{equation: DE 2 in introduction} is apparent with these 
  parameters.

  It remains to prove that a rational function $H(x)$ with properties
  described above exists. According to Riemann's existence theorem, it
  suffices to find $\sigma_\infty$ that is a product of $m$ disjoint
  $3$-cycles and $\sigma_1$ that is a $\kappa_i$-cycle in $S_d$ such
  that $\sigma_1\sigma_\infty$ is a disjoint product of a cycle of
  length $3\ell+1$ and $m'$ cycles of length $3$. Indeed, we find that
  we may choose
  $$
  \sigma_\infty=(2,3,4)(5,6,7)\ldots(3m-1,3m,3m+1)
  $$
  and
  $$
  \sigma_1=(1,2,5,8,\ldots,3m-1,3m'+1,3m'-2,\ldots,7,4).
  $$
  Then
  $$
  \sigma_1\sigma_\infty=(1,2,3)(4,5,6)\ldots(3m'-2,3m'-1,3m')
  (3m'+1,3m'+2,\ldots,d).
  $$
  This settles the case $r+t_{\kappa_i}=-(\ell+1/3)^2$.

  The case $r+t_{\kappa_i}=-(\ell-1/3)^2$ can be dealt with in the
  same way. The difference is that the rational function $H(x)$
  in this case has degree
  $$
  d=\frac32(\kappa_i+\ell-1)
  $$
  and is of the form
  $$
  H(x)=\frac{x^{3\ell-1}G(x)^3}{F(x)^3}, \quad
  \deg F(x)=m, \quad \deg G(x)=m',
  $$
  where $m$ and $m'$ are the same as those in \eqref{equation: m m'},
  such that $xF(x)G(x)$ is squarefree and
  $$
  H(x)-1=\frac{(x-1)^{\kappa_i}L(x)}{F(x)^3}
  $$
  for some polynomial $L(x)$ of degree $d-\kappa_i$ with no repeated
  roots. I.e., $\sigma_\infty$ in this case is a disjoint product of
  $m$ $3$-cycles, $\sigma_0$ is a a disjoint product of
  $(3\ell-1)$-cycle and $m'$ $3$-cycles, and $\sigma_1$ is a
  $\kappa_i$-cycle. We choose
  $$
  \sigma_\infty=(1,2,3)(4,5,6)\ldots(3m-2,3m-1,3m)
  $$
  and
  $$
  \sigma_1=(1,4,7,\ldots,3m-2,3m,3m-3,\ldots,3\ell)
  $$
  with
  $$
  \sigma_1\sigma_\infty=(1,2,3,4,\ldots,3\ell-1)
  (3\ell,3\ell+1,3\ell+2)\ldots(3m-3,3m-2,3m-1).
  $$
  The rest of proof is the same as the case of
  $r+t_{\kappa_i}=-(\ell+1/3)^2$. This completes the proof that
  \eqref{equation: r+t=-(l+1/3)^2} is the complete list of parameters $r$
  such that \eqref{equation: DE 2 in introduction} with
  $t=t_{\kappa_i}$ is apparent.
\end{proof}

\begin{Example} For small $\kappa_i$, the modular functions $h(z)$ in
  the proof are given by
  $$ \extrarowheight3pt
  \begin{array}{cccc} \hline\hline
    \kappa_i & \ell\pm1/3 & (r,t) & h(z) \\ \hline
    1 & \displaystyle\frac13 & \displaystyle
    \left(\frac{23}{36},-\frac34\right) \phantom{\Bigg|}
    & J_3 \\
    2 & \displaystyle\frac23 & \displaystyle
    \left(\frac{119}{36},-\frac{15}4\right)\phantom{\Bigg|}
    &\displaystyle\frac{J_3^2}{1+2J_3^3} \\
    2 & \displaystyle\frac43 & \displaystyle
    \left(\frac{71}{36},-\frac{15}4\right)\phantom{\Bigg|}
    &\displaystyle\frac{J_3^4}{1-4J_3^3} \\
                               \hline\hline
  \end{array}
  $$
\end{Example}

\begin{proof}[Proof of Theorem \ref{theorem: necessary and sufficient conditions for appartness at infinity}]
Assume that $n_i$, $n_\rho$, and $n_\infty$ are positive
  integers satisfying the two conditions. We note that the
  parameters $r$, $s$, and $t$ in \eqref{equation: DE 3} are
  \begin{equation} \label{equation: apparent r s t}
    r=-n_\infty^2+n_\rho^2+n_i^2-\frac{13}{36}, \qquad
     s=\frac19-n_\rho^2, \qquad
     t=\frac14-n_i^2.
  \end{equation}
  Let
  $$
  d=\frac12(n_i+n_\rho+n_\infty-1).
  $$
  By the second condition, we have
  $$
  d-n_i=\frac12(n_\rho+n_\infty-n_i-1)\ge 0
  $$
  and similarly, $d-n_\rho\ge 0$. Thus, there are cycles of lengths
  $n_i$ and $n_\rho$ in the symmetric group $S_d$. Choose
  $$
  \sigma_1=(1,\ldots,n_i), \qquad
  \sigma_2=(d,d-1,\ldots,d-n_\rho+1)
  $$
  By the second condition again, we have
  $$
  n_i-(d-n_\rho+1)=\frac12(n_i+n_\rho-n_\infty-1)\ge 0.
  $$
  In other words, the two cycles are not disjoint. We then compute
  that
  $$
  \sigma_2\sigma_1=(1,\ldots,d-n_\rho,d,d-1,\ldots,n_i).
  $$
  This is a cycle of length
  $$
  d-n_\rho+(d-n_i+1)=2d-n_\rho-n_i+1=n_\infty.
  $$
  It is clear that the subgroup of $S_d$ generated by $\sigma_1$ and
  $\sigma_2$ is transitive. Thus, by Riemann's existence theorem,
  given three distinct points $\zeta_1$, $\zeta_2$, and $\zeta_3$ on
  $\P^1(\C)$, there is a covering $H:X\to\P^1(\C)$ of compact Riemann
  surfaces of degree $d$ branched at $\zeta_1$, $\zeta_2$, and
  $\zeta_3$ with monodromy $\sigma_1$, $\sigma_2$, and
  $\sigma_3=\sigma_1^{-1}\sigma_2^{-1}$,
  respectively. By the Riemann-Hurwitz formula, the genus of $X$ is
  $0$ and we may assume that $X=\P^1(\C)$. Applying a suitable
  linear fractional transformation (i.e., an automorphism of $X$) if
  necessary, we may assume that the ramification points on $X$ are
  $1728=j(i)$, $0=j(\rho)$, and $\infty=j(\infty)$ with
  ramification indices $n_i$, $n_\rho$, and $n_\infty$,
  respectively. Let $h:X_0(1)\to\P^1(\C)$ be defined by
  $h(z)=H(j(z))$. Following the same computation as in the proof of
  Theorem \ref{theorem: 1.5}(b), we can show that
  $$
  \{h(z),z\}=-2\pi^2\left(rE_4(z)+s\frac{E_6(z)^2}{E_4(z)^2}
    +t\frac{E_4(z)^4}{E_6(z)^2}\right)
  $$
  with $r$, $s$, and $t$ given as \eqref{equation: apparent r s t}
  (details omitted). This implies that the singularities of
  \eqref{equation: DE 3} are all apparent.

  Conversely, assume that the differential equation \eqref{equation: DE
    3} is apparent throughout $\H\cup\{\text{cusps}\}$. Let 
  $\pm n_\infty/2$ be the local exponents at $\infty$. Then
  a fundamental pair of solutions near $\infty$ is
  $$
  y_\pm(z)=q^{\pm n_\infty/2}\left(1+\sum_{n=1}^\infty
    c_n^\pm q^n\right).
  $$
  Let $h(z)=y_+(z)/y_-(z)$. Since \eqref{equation: DE 3} is apparent
  throughout $\H$, $h(z)$ is a single-valued function on $\H$. Arguing
  as in the second proof of  Theorem \ref{theorem: necessary and
    sufficient conditions for appartness at infinity}, we see that
  $h(z)$ is a modular function on $\SL(2,\Z)$. Now since
  $$
  \{h(z),z\}=-2\pi^2\left(rE_4(z)+s\frac{E_6(z)^2}{E_4(z)^2}
    +t\frac{E_4(z)^4}{E_6(z)^2}\right)
  $$
  have poles only at points equivalent to $\rho$ or $i$ under
  $\SL(2,\Z)$, the covering $X_0(1)\to\P^1(\C)$ defined by $z\mapsto
  h(z)$ can only ramify at $\rho$, $i$, or $\infty$. From the
  computation above, we see that their ramification indices must be
  $n_\rho$, $n_i$, and $n_\infty$, respectively. Then by the
  Riemann-Hurwitz formula, $n_\rho+n_i+n_\infty$ must be odd and the
  degree of the covering is $(n_\rho+n_i+n_\infty-1)/2$. Since
  the ramification indices $n_\rho$, $n_i$, and $n_\infty$ cannot
  exceed the degree of the covering, we conclude that the sum of any
  two of $n_\rho$, $n_i$, and $n_\infty$ must be greater than the
  remaining one. This completes the proof of the theorem.
\end{proof}

 \section{Eremenko's Theorem and its applications}
 
\begin{proof}[Second proof of \eqref{equation: r+s=-(l+1/2)^2}]
  In Section 2.3, Example 2 shows that
the angle of $Q_1$ at $i,\rho$ and $\infty$ are
\begin{equation}
\theta_1=\frac{1}{2},\quad \theta_2=\frac{2\kappa_\rho}{3},\quad \text{and}\quad\theta_\infty=\sqrt{-(r+s_{\kappa_\rho})}.
\end{equation}

First, we consider $\theta_2$ is even, say $\theta_2=2\ell_0$.
By Eremanko's Theorem in Section 2, the curvature equation
\eqref{2.4} has a solution if and only if either
$\abs{\theta_\infty-\theta_1}=2\ell+1$ or
$\theta_\infty+\theta_1=2\ell+1$ for some $\ell\in\Z_{\geq 0}$ and
$\ell\leq \ell_0-1$.
Since $\theta_\infty>0$, the condition
$\abs{\theta_\infty-\theta_1}=2\ell+1\geq 1$ implies
$\theta_\infty-\theta_1>0$ and then
$\theta_\infty-\theta_1=2\ell+1$. This is equivalent to
$-(r+s_{\kappa_\rho})=\theta^2_\infty=(2\ell+1+1/2)^2$,
$\ell=0,\ldots,\ell_0-1$. The second condition
$\theta_\infty+\theta_1=2\ell+1$ is equivalent to
$-(r+s_{\kappa_\rho})=\theta^2_\infty=(2\ell+1/2)^2$,
$\ell=0,\ldots,\ell_0-1$.
Therefore, there are exactly $2\ell_0$ different $\theta_\infty$ such
that the curvature equation \eqref{2.4} has
a solution and each of such a curvature equation is associated with the
modular form $Q_1(z;r,s_{\kappa_\rho})$ with $(r,s_{\kappa_\rho})$ where $r+s_{\kappa_\rho}=-(\ell+1/2)^2$ for
some $\ell\in\braces{0,\ldots,2\ell_0-1}$.
By Theorem \ref{theorem: 2.1},
for each $(r,s_{\kappa_\rho})$, the ODE \eqref{equation: DE in introduction}
is apparent. However, the first part of Theorem \ref{theorem: 1.5}(b) says that
there exists a polynomial $P(x)$ of degree $2\kappa_\rho/3$ such that
\eqref{2.4} with $(r,s_{\kappa_\rho})$ is apparent if and only if
$P(r)=0$. Therefore, $P(r)$ has distinct roots and each root satisfies
$r+s_{\kappa_\rho}=-(\ell+1/2)^2$ for some integer $\ell$, $0\leq\ell\leq 2\ell_0-1=\theta_2-1$.
The proves \eqref{equation: r+s=-(l+1/2)^2} 
when $\theta_2$ is even. 

For the case $\theta_2$ is odd,
the idea of the proof is basically the same. 
By noting $\theta_1=1/2$,
the Eremenko theorem in Section 2
implies either
  $\abs{\theta_\infty-1/2}=\ell$ or $\theta_\infty+1/2=\ell$, where
  $\ell$ is even because $\theta_2$ is odd.
  The first condition can be replaced by
  $\theta_\infty-1/2=\ell$. Thus we have $\theta_\infty=\ell+1/2$ or
  $\theta_\infty=\ell-1/2=(\ell-1)+1/2$, that is
 $r+s=-(\ell+1/2)^2$, $\ell=0,1,2,\ldots,\theta_2-1$. 
The proof of \eqref{equation: r+s=-(l+1/2)^2} is complete.
\end{proof}

\begin{proof}[Second proof of \eqref{equation: r+t=-(l+1/3)^2}]
  The angles for $Q_2(z)$ are $\theta_1=\kappa_i$, $\theta_2=1/3$, and
  $\theta_\infty=\sqrt{-(r+t_i)}$, where $\frac{1}{2}\pm \kappa_i$ are
  the local exponents of \eqref{equation: DE 2 in introduction}. Hence
$$
\kappa_i-\frac{1}{2}
+1=m+\frac{1}{2}
$$
i.e., $\theta_1=\kappa_i$ is an integer. Hence, there is a solution $u$ of 
\eqref{2.4}-\eqref{2.6} with the RHS equals to $4\pi n\sum\delta_p$,
where the summation runs over $\gamma\cdot i$, $\gamma\in\SL(2,\Z)$,
if and only if either
$\theta_\infty-\theta_2=\abs{\theta_\infty-\theta_2}=\ell$ or
$\theta_\infty+\theta_2=\ell$ where $\ell\leq \kappa_i-1$ and $\ell$
has the opposite parity of $\kappa_i$. Hence,
$\theta_\infty=\ell\pm1/3$ and $r+t_i=-(\ell\pm1/3)^2$.
This proves \eqref{equation: r+t=-(l+1/3)^2}.
\end{proof}

\begin{proof}[Second proof of Theorem \ref{theorem: necessary and
    sufficient conditions for appartness at infinity}]
Suppose that the ODE \eqref{equation: DE 3} has
local exponents $\pm n_\infty$ at $\infty$,
$n_\infty\in\frac{1}{2}\N$. We claim that \emph{\eqref{equation: DE 3}
  is apparent throughout $\H^\ast$
  if and only if $Q_3(z)=Q_3(z;r,s,t)$ is realized by a metric with
  curvature $1/2$}. It is clear that the second statement implies the
first statement. So it suffices to prove the other direction.

Suppose that \eqref{equation: DE 3} is apparent throughout $\H^\ast$.
Let $y_{\pm}(z)=q^{\pm n_\infty/2}\left(1+O(q)\right)$ be two solutions of
\eqref{equation: DE 3}
and set $h(z)=y_+(z)/y_-(z)$. Since \eqref{equation: DE 3} is apparent
on $\H$, $h(z)$ is a meromorphic single-valued function on $\H$ and
its Schwarz derivative is $-2Q_3(z)$. Recall Bol's theorem that there
is a homomorphism $\rho:\SL(2,\Z)\rightarrow\rm{PSL}(2,\C)$ such that
$$\begin{pmatrix}
\left(y_1\big|_{-1}\gamma\right)(z)\\
\left(y_2\big|_{-1}\gamma\right)(z)
\end{pmatrix}=\pm\rho(\gamma)\begin{pmatrix}
y_1(z)\\
y_2(z)
\end{pmatrix},\quad \gamma\in\SL(2,\C).
$$ 
 Clearly, $\rho(T)=\pm I$ because $\infty$ is apparent. Note that
 $\ker\rho$ is a normal subgroup of $\SL(2,\Z)$ and contains $\gamma
 T\gamma^{-1}$ for any $\gamma\in\SL(2,\Z)$. In particular, $\ker\rho$
 contains both $T=\SM1101$ and $STS^{-1}=\SM10{-1}1$, where
 $S=\SM0{-1}10$. Since $\SM1101$ and $\SM10{-1}1$ generate
 $\SL(2,\Z)$, we conclude that $\ker\rho=\SL(2,\Z)$.
In other words, $\rho(\gamma)=\pm I$ and $h(z)$ is a modular function
on $\SL(2,\Z)$. Thus we have a solution
$u:=\log\frac{8\abs{h'(z)}^2}{\left(1+\abs{h(z)}^2\right)^2}$ which
realizes $Q_3$. This proves the claim.
 
 Now, we apply the Eremenko theorem with the angles given by
 $\theta_1=\kappa_i$, $\theta_2=2\kappa_\rho/3$ and
 $\theta_3=n_\infty$. Our necessary and sufficient condition in
 Theorem \ref{theorem: necessary and sufficient conditions for appartness at infinity} is identically the same as the condition of
 Eremenko's theorem for the existence of $u$ with three integral
 angles. This proves Theorem \ref{theorem: necessary and sufficient conditions for appartness at infinity}.
 \end{proof}

\begin{Theorem}\label{theorem: condition for Q be realized}
Suppose $\kappa_i\in\N$ and $\kappa_\rho,\kappa_\infty\in\frac{1}{2}\N$ such that $2\kappa_\rho/3\in\N$.
If $Q_3(z;r,s,t)$ is apparent at $\rho$ and $i$, then $Q$ can be realized.
\end{Theorem}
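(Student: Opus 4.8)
The plan is to show that the ratio $h(z)=y_2(z)/y_1(z)$ of two independent solutions of \eqref{equation: DE 3} is in fact a modular function on $\SL(2,\Z)$, and then to invoke Proposition \ref{proposition: Q can be realized}; this runs parallel to the second proof of Theorem \ref{theorem: necessary and sufficient conditions for appartness at infinity}, with the cusp $\infty$ replaced by the elliptic points $\rho$ and $i$. Since $E_4(z)$ vanishes exactly on the $\SL(2,\Z)$-orbit of $\rho$ and $E_6(z)$ exactly on that of $i$, the poles of $Q_3(z;r,s,t)$ in $\H$ all lie in these two orbits, so by Proposition \ref{proposition: same exponents} the apparentness at $\rho$ and at $i$ propagates to every pole of $Q_3$; thus \eqref{equation: DE 3} is apparent on $\H$ and $h$ is single-valued and meromorphic on all of $\H$.

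The heart of the matter is a local study of $h$ at $\rho$ and at $i$. With $w_\rho=(z-\rho)/(z-\overline\rho)$, Corollary \ref{corollary: expansion of f at i} and Lemma \ref{lemma: leading terms of E4 E6} give $Q_3(z)=(1-w_\rho)^4\sum_n q_nw_\rho^n$ with $q_n=0$ unless $n\ge-2$ and $n\equiv1\mod3$, and $q_{-2}=\tfrac34 s_{\kappa_\rho}\ne0$ (note that $2\kappa_\rho/3\in\N$ forces $\kappa_\rho\ne\tfrac12$). Running the Frobenius method in the variable $w_\rho$, exactly as in the proof of Theorem \ref{theorem: 1.5}(a)---the extra term $tE_4^4/E_6^2$ contributing only exponents $\equiv1\mod3$ as well---one finds that the two Frobenius solutions at $\rho$ have the shape $(1-w_\rho)^{-1}w_\rho^{1/2\pm\kappa_\rho}g_\pm(w_\rho^3)$ with $g_\pm$ holomorphic and nonzero at $0$, the apparentness hypothesis at $\rho$ being precisely the vanishing of the obstruction at the index $n=2\kappa_\rho$ that makes the lower solution logarithm-free. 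Because $2\kappa_\rho=3n_\rho$, the quotient of these two solutions equals $(w_\rho^3)^{n_\rho}g_+(w_\rho^3)/g_-(w_\rho^3)$, a meromorphic function of $w_\rho^3$; since $h$ equals a constant M\"obius transform of that quotient near $\rho$, $h$ too is a meromorphic function of $w_\rho^3$ near $\rho$. An entirely parallel computation at $i$, now with $w_i=(z-i)/(z+i)$, the statement of Corollary \ref{corollary: expansion of f at i} at $i$, and the fact that $2\kappa_i=2n_i$ is even, shows that $h$ is an even meromorphic function of $w_i$ near $i$.

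To globalize, set $\gamma_\rho=\SM0{-1}1{-1}$, which has order $3$ and fixes $\rho$, and $S=\SM0{-1}10$, which fixes $i$; together with $-I$ these generate $\SL(2,\Z)$. By the lemma preceding Corollary \ref{corollary: expansion of f at i} we have $w_\rho(\gamma_\rho z)=e^{2\pi i/3}w_\rho(z)$ and $w_i(Sz)=-w_i(z)$, so the local shapes just obtained yield $h(\gamma_\rho z)=h(z)$ on a neighbourhood of $\rho$ and $h(Sz)=h(z)$ on a neighbourhood of $i$. As $h$ and $h\circ\gamma_\rho$ (respectively $h$ and $h\circ S$) are meromorphic on the connected set $\H$ and agree on an open subset, each identity holds on all of $\H$; and $-I$ acts trivially on $\H$. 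Hence $h(\gamma z)=h(z)$ for every $\gamma\in\SL(2,\Z)$, i.e. $h$ is a modular function on $\SL(2,\Z)$. Since $Q_3(z;r,s,t)$ is a meromorphic modular form of weight $4$ on $\SL(2,\Z)$ and $h(\gamma z)=h(z)$ is the transformation law of $h$ attached to the (trivially unitary) matrix $\SM1001$, Proposition \ref{proposition: Q can be realized} produces a solution $u$ of \eqref{2.4} realizing $Q$. (The hypothesis $\kappa_\infty\in\tfrac12\N$ is in fact automatic here: the $T$-invariance of $h$ already forces the two solutions at $\infty$ to be logarithm-free with $q$-exponents in $\tfrac12\Z$.)

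I expect the main obstacle to be the bookkeeping in the middle paragraph: one has to check with care that each of the three terms $E_4$, $E_6^2/E_4^2$, $E_4^4/E_6^2$ of $Q_3$ contributes, after the automorphy factor $(1-w)^4$ is extracted, only exponents $\equiv1\mod3$ at $\rho$ (respectively, only even exponents at $i$), so that the Frobenius recursion confines both solutions to a single residue class; and then that the divisibility $3\mid2\kappa_\rho$ (respectively $2\mid2\kappa_i$) is exactly what upgrades ``$h$ is $w_\rho^{2\kappa_\rho}$ times a power series in $w_\rho^3$'' to ``$h$ is a function of $w_\rho^3$.'' Everything else is a short deduction from results already established in the paper.
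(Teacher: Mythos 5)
Your proof is correct, and it takes a genuinely different route from the paper's. The paper argues indirectly by counting: apparentness at $i$ and at $\rho$ is encoded by polynomials $P_1,P_2$ in $r$ of degrees $\kappa_i$ and $2\kappa_\rho/3$, Eremenko's theorem (Section 2.4) produces exactly $\min\{\kappa_i,2\kappa_\rho/3\}$ admissible integral angles at $\infty$, each realized metric yields a common root of $P_1,P_2$ by Theorem \ref{theorem: 2.1}(c), and since the number of common roots is also bounded above by $\min\{\deg P_1,\deg P_2\}$, every common root must come from a metric. You instead compute the Bol representation directly: the congruence constraints of Corollary \ref{corollary: expansion of f at i} confine the Frobenius recursion at $\rho$ (resp.\ $i$) to exponents in a single residue class mod $3$ (resp.\ mod $2$), apparentness removes the logarithm from the lower solution, and the divisibilities $3\mid 2\kappa_\rho$, $2\mid 2\kappa_i$ make the ratio of local solutions a function of $w_\rho^3$ (resp.\ $w_i^2$); since $w_\rho(\gamma_\rho z)=e^{2\pi i/3}w_\rho(z)$ and $w_i(Sz)=-w_i(z)$, the developing map $h$ is fixed by $\gamma_\rho$ and $S$, which generate $\mathrm{PSL}(2,\Z)$ (indeed $T=\gamma_\rho^{-1}S$), and Proposition \ref{proposition: Q can be realized} applies with the identity matrix. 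This is the elliptic-point analogue of the paper's own second proof of Theorem \ref{theorem: necessary and sufficient conditions for appartness at infinity}, where triviality of the representation is instead deduced from $\rho(T)=\pm I$ at the cusp via normality of the kernel. Your approach buys independence from Eremenko's theorem, an explicit developing map, and the bonus that apparentness at $\rho$ and $i$ alone forces apparentness at $\infty$ together with the parity and triangle conditions; the paper's approach buys the exact count $\min\{\kappa_i,2\kappa_\rho/3\}$ of realizable $Q_3$'s and the divisibility relation between $P_1$ and $P_2$ recorded in the remark. The two steps worth stating explicitly in a final write-up are (i) that $h$ is single-valued and meromorphic on all of $\H$ (via Proposition \ref{proposition: same exponents}), so the local identities $h\circ\gamma_\rho=h$ and $h\circ S=h$ propagate by the identity theorem, and (ii) the cusp asymptotics $e^u\sim|q|^{4\kappa_\infty}$ with $\kappa_\infty>0$ needed for \eqref{2.6}, which your $T$-invariance observation supplies.
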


\begin{proof}
By the assumption, we have that $\theta_i$, $1\leq i\leq 3$, are all integers. Now, given $\kappa_i$ and $\kappa_\rho$, $s$ and $t$ are determined by the same formula in our paper. Further, there are polynomials $P_1$ and $P_2$:
\begin{enumerate}
\item[$\bullet$]
$Q_3(z;r,s,t)$ is apparent at $i$ if and only if $P_1(r)=0$, and $\deg P_1(r)=\kappa_i$.
\item[$\bullet$]
$Q_3(z;r,s,t)$ is apparent at $\rho$ if and only if $P_2(r)=0$, and $\deg P_2=2\kappa_\rho/3$.

\end{enumerate}
Therefore, $Q_3(z;r,s,t)$ is apparent at $i$ and $\rho$ if and only if
$$
r\in\braces{r:P_1(r)=P_2(r)=0}.
$$
Now, we claim that under the assumption $\theta_1\in\N$,
$Q_3(z;r,s,t)$ is apparent if and only if the local 
exponents at $\infty$ are $\pm \kappa_\infty/2$, $\kappa_\infty\in\N$
and the curvature equation has a solution.

By Eremenko's Theorem (Section 2.4), (recall $\theta_1=\kappa_i$,
$\theta_2=2\kappa_\rho/3$, $\theta_3=2\kappa_\infty$) the curvature
equation has a solution if and only if $\theta_1+\theta_2+\theta_3$ is
odd and $\theta_i<\theta_j+\theta_k$, $i\neq j\neq k$. This condition
is equivalent to
\begin{enumerate}
\item[(a)] $$\theta_2-\theta_1<\theta_3<\theta_2+\theta_1,\quad \text{and}
$$
\item[(b)] $$\theta_1-\theta_2<\theta_3<\theta_1+\theta_2.$$
\end{enumerate}
Since $\theta_1+\theta_2+\theta_3$ is odd, we have $\theta_2$
solutions of the curvature equation if $\theta_1>\theta_2$, $\theta_1$
solutions if $\theta_2>\theta_1$.

Now, $\deg P_1=\kappa_i=\theta_1$ and $\deg P_2=2\kappa_\rho/3=\theta_2$.
Then
\begin{align*}
\min\braces{\theta_1,\theta_2}&\geq\abs{\braces{r:P_1(r)=P_2(r)=0}}\\
&=\ge\#\ \text{of\ curvature\ equations}
\geq\min\braces{\theta_1,\theta_2}.
\end{align*}
Thus
$$
\abs{\braces{r:P_1(r)=P_2(r)=0}}=\#\ \text{of\ curvature\ equations}.
$$
This proves the theorem.
\end{proof}

\begin{Remark}
In fact, the proof shows that if $\deg P_i\leq\deg P_j$, then $P_i$ is
a factor of $P_j$.
\end{Remark}

\section{proof of Theorem \ref{theorem: 1.1} and Theorem
  \ref{theorem: 1.2}}
\label{section: proof of theorem 1.1}

\begin{proof}[Proof of Theorem \ref{theorem: 1.1}]
Let $\rho$ be the Bol representation associated to \eqref{(1.1)}, and
set $T=\SM 1101$, $S=\SM 0{-1}10$, and $R=TS=\SM 1{-1}10$. They satisfy
\begin{equation}\label{equation: S^2 and R^3}
 S^2=-I,\quad \text{and}\quad R^3=-I.
\end{equation}
Assume that ($\mb H_1$) and ($\mb H_2$) hold. It follows from either
\cite[Theorem 2.5]{Eremenko-Tarasov}, quoted as Theorem \ref{theorem:
  ET} in the appendix, or Theorem \ref{thm1} (with
$\theta_1=1/2$, $\theta_2=1/3$, and $\theta_3=2r_\infty$ or
$\theta_3=1-2r_\infty$, depending on whether $2r_\infty\le1/2$ or
$2r_\infty>1/2$) in the appendix that if $1/12<r_\infty<5/12$, then an
invariant metric realizing $Q(z)$ exists, and if $0<r_\infty<1/12$ or
$5/12<r_\infty\le1/2$, then there does not exist an invariant metric
realizing $Q(z)$. So here we are concerned with the case $r_\infty=1/12$ or
$r_\infty=5/12$.

Assume that $r_\infty=1/12$. Then there exists a basis
$\{y_1(z),y_2(z)\}$ for the
solution space of \eqref{(1.1)} such that
\begin{equation} \label{equation: rho(T) 2}
\rho(T)=\pm\M{\epsilon}00{\overline\epsilon}, \qquad \epsilon=e^{2\pi i/12}.
\end{equation}
Since $S^2=-I$, we have $\rho(S)^2=\pm I$. The matrix $\rho(S)$ cannot
be equal to $\pm I$ as the relation $R=TS$ will imply that the
eigenvalues of $\rho(R)$ are $\pm e^{2\pi i/12}$ or $\pm e^{-2\pi
  i/12}$, which is absurd. It follows that $\tr\rho(S)=0$ and we have
\begin{equation} \label{equation: rho(S) 2}
\rho(S)=\pm\M abc{-a}, \qquad
\rho(R)=\pm\rho(T)\rho(S)
=\pm\M{\epsilon a}{\epsilon b}{\bar{\epsilon}c}{-a\bar{\epsilon}}
\end{equation}
for some $a,b,c\in\C$. Since $\rho(R)^3=\pm I$, $\det\rho(R)=1$, and
$\rho(R)\neq\pm I$ by a similar reason as above, the characteristic
polynomial of $\rho(R)$ has to be $x^2-x+1$ or $x^2+x+1$. In
particular, we have $\tr\rho(R)=\pm 1$, i.e.,
$a(\epsilon-\overline\epsilon)=\pm 1$ and hence $a=\pm i$ and $bc=0$.
Under the assumption that there is an invariant metric realizing
$Q(z)$, the matrices $\rho(S)$, $\rho(T)$, and $\rho(R)$ must be
unitary, after a simultaneous conjugation. (See the discussion in
Section 2.2.) If one of $b$ and $c$ is not $0$, this cannot happen.
Therefore, we have $b=c=0$. This implies that the function $y_1(z)^2$,
which is meromorphic throughout $\H$ since the local exponents at
every singularity are in $\frac12\Z$, satisfies
$$
y_1(Tz)^2=e^{2\pi i/6}y_1(z)^2, \qquad
y_1(Sz)^2=-z^{-2}y_1(z)^2.
$$
It follows that $y_1(z)^2$ is a meromorphic modular form of weight
$-2$ with character $\chi$ on $\SL(2,\Z)$. Likewise, we can show that
$y_2(z)^2$ is a meromorphic modular form of weight $-2$ with character
$\overline\chi$. This proves that if there is an invariant metric
realizing $Q(z)$, then there are solutions $y_1(z)$ and $y_2(z)$ with
the stated properties. The proof of the case
$r_\infty=5/12$ is similar and is omitted.

The proof of the converse statement is easy. If there exist solutions
$y_1(z)$ and $y_2(z)$ of \eqref{(1.1)} such that $y_1(z)^2$ and
$y_2(z)^2$ are meromorphic modular forms of weight $-2$ with character
$\chi$ and $\overline\chi$, respectively, on $\SL(2,\Z)$, then
$y_1(Tz)^2=e^{2\pi i/6}y_1(z)^2$ and $y_2(Tz)^2=e^{-2\pi
  i/6}y_2(z)^2$, which implies that $y_1(z)^2$ and $y_2(z)^2$ are of
the form
$y_1(z)^2=q^{1/6}\sum_{j\ge n_0}c_jq^j$ and
$y_2(z)^2=q^{-1/6}\sum_{j\ge n_0}d_jq^j$.
It follows that $r_\infty=1/12$ or $r_\infty=5/12$. It is clear that
with respect to the basis $\{y_1(z),y_2(z)\}$, the Bol representation is given by
$$
\rho(T)=\pm\M{e^{2\pi i/12}}00{e^{-2\pi i/12}}, \qquad
\rho(S)=\pm\M{\pm i}00{-i},
$$
and hence is unitary. It follows that there is an invariant metric of
curvature $1/2$ realizing $Q(z)$. This proves the theorem.
\end{proof}

We now give two examples with $r_\infty=1/12$, one of which can be
realized by some invariant metric of curvature $1/2$, while the other
of which can not. Note that Theorem 1 of \cite{Eremenko} implies that
when \eqref{(1.1)} does not have $\SL(2,\Z)$-inequivalent
singularities outside $\{i,\rho\}$, $1/12<r_\infty<5/12$ is the
necessary and sufficient condition for the existence of an invariant
metric of curvature $1/2$ realizing $Q$. The examples we provide below
show that when \eqref{(1.1)} has $\SL(2,\Z)$-inequivalent
singularities other than $i$ and $\rho$, this condition is no longer
a necessary condition.

\begin{Example} Let
  $\eta(z)=q^{1/24}\prod_{n=1}^\infty(1-q^n)=\Delta(z)^{1/24}$,
  \begin{equation} \label{equation: xy}
  x(z)=\frac{E_4(z)}{\eta(z)^8}=q^{-1/3}+\cdots, \qquad
  y(z)=\frac{E_6(z)}{\eta(z)^{12}}=q^{-1/2}+\cdots,
  \end{equation}
  and $h(z)=x(z)/y(z)=q^{1/6}+\cdots$. They are modular functions on
  the unique normal subgroup $\Gamma$ of $\SL(2,\Z)$ of index $6$ such
  that $\SL(2,\Z)/\Gamma$ is cyclic. (Another way to describe $\Gamma$
  is that $\Gamma=\ker\chi$, where $\chi$ is the character of
  $\SL(2,\Z)$ such that $\chi(S)=-1$ and $\chi(R)=e^{2\pi i/3}$.)
  Using Ramanujan's identities
  \begin{equation*}
    \begin{split}
      D_qE_2(z)&=\frac{E_2(z)^2-E_4(z)}{12}, \\
      D_qE_4(z)&=\frac{E_2(z)E_4(z)-E_6(z)}3, \\
      D_qE_6(z)&=\frac{E_2(z)E_6(z)-E_4(z)^2}2,
    \end{split}
  \end{equation*}
  where $D_q=qd/dq$ (see \cite[Proposition 15]{Zagier123}) and the
  relation $\Delta(z)=(E_4(z)^3-E_6(z)^2)/1728$, we can compute that
  $$
  \{h(z),z\}=(2\pi i)^2Q_0(z)
  $$
  where
  $$
  Q_0(z)=E_4(z)\left(
    -\frac1{72}-\frac{9(E_4(z)^3-E_6(z)^2)^2}{(3E_4(z)^3-2E_6(z)^2)^2}
    +\frac52\frac{E_4(z)^3-E_6(z)^2}{3E_4(z)^3-2E_6(z)^2}\right).
  $$
  Thus,
  $$
  y_+(z)=\frac{h(z)}{\sqrt{D_qh(z)}}=q^{1/12}+\cdots, \quad
  y_-(z)=\frac1{\sqrt{D_qh(z)}}=q^{-1/12}+\cdots
  $$
  are solutions of the differential equation
  $y''(z)=Q(z)y(z)$, where $Q(z)=-(2\pi i)^2Q_0(z)/2$. The meromorphic
  modular form $Q(z)$ has only one $\SL(2,\Z)$-inequivalent
  singularity at the point $z_1$ such that $3E_4(z_1)^3-2E_6(z_1)^2=0$
  and is holomorphic at the elliptic points $i$ and $\rho$. In the
  notation of Theorem \ref{theorem: 1.1}, we have $r_\infty=1/12$.
  This provides an example of an invariant metric of curvature $1/2$
  realizing a meromorphic modular form of weight $4$ with a threshold
  $r_\infty$. Note that with respect to the basis $\{y_+,y_-\}$, the
  Bol representation is given by
  $$
  \rho(T)=\pm\M{e^{2\pi i/12}}00{e^{-2\pi i/12}}, \qquad
  \rho(S)=\pm\M i00{-i},
  $$
  both of which are unitary. (The information about $\rho(S)$ follows
  from the transformation formula $\eta(-1/z)=\sqrt{z/i}\eta(z)$ and
  the fact that $D_qh(z)=C\eta(z)^4(3E_4(z)^3-2E_6(z)^2)/E_6(z)^2$ for
  some constant $C$.)
\end{Example}

\begin{Example}
  Let $x(z)$ and $y(z)$ be defined by \eqref{equation:
    xy}, and $\Gamma$ be the unique normal subgroup of $\SL(2,\Z)$ of
  index $6$ such that $\SL(2,\Z)/\Gamma$ is cyclic. The modular curve
  $X(\Gamma):=\Gamma\backslash\H^\ast$
  has one cusp of width $6$, no elliptic points, and is of genus $1$.
  Since the modular functions $x(z)$ and $y(z)$ on $\Gamma$ have only
  a pole of order $2$ and $3$, respectively, at the cusp $\infty$ and
  are holomorphic elsewhere, they generate the function field of
  $X(\Gamma)$. Then from the relation
  $E_4(z)^3-E_6(z)^2=1728\eta(z)^{24}$, we see that $x(z)$ and $y(z)$
  satisfies
  $$
  y^2=x^3-1728,
  $$
  which we may take as the defining equation for $X(\Gamma)$.
  Let $f(z)$ be a meromorphic modular form of weight $2$ on $\Gamma$
  such that all residues on $\H$ are $0$. Equivalently, let $\omega=f(z)\,dz$
  be a meromorphic differential $1$-form of the second kind on
  $X(\Gamma)$. Consider
  $$
  y_1(z)=\frac1{\sqrt{f(z)}}\int_{z_0}^zf(u)\,du, \qquad
  y_2(z)=\frac1{\sqrt{f(z)}},
  $$
  where $z_0$ is a fixed point in $\C$ that is not a pole of $f(z)$.
  Under the assumption that all residues of $f(z)$ are $0$, the
  integral in the definition of $y_1(z)$ does not depend on
  the choice of path of integration from $z_0$ to $z$.
  A straightforward computation shows that the Wronskian of $y_1$ and
  $y_2$ is a constant and hence $y_1(z)$ and $y_2(z)$ are solutions of
  the differential equation $y''(z)=Q(z)y(z)$, where
  $$
  Q(z)=\frac{3f'(z)^2-2f(z)f''(z)}{4f(z)^2}
  $$
  can be shown to be a meromorphic modular form of weight $4$ on
  $\Gamma$. (The numerator of $Q(z)$ is a constant mulitple of the
  Rankin-Cohen bracket $[f,f]_2$ and hence a mermomorphic modular form
  of weight $8$. See \cite{Rankin-Cohen}.) By construction, this
  differential equation is apparent throughout $\H$. Furthermore, if
  $f(z)$ is chosen in a way such that
  $f(\gamma z)=\chi(\gamma)(cz+d)^2f(z)$ holds for all $\gamma=\SM
  abcd\in\SL(2,\Z)$ for some character $\chi$ of $\SL(2,\Z)$ with
  $\Gamma\subset\ker\chi$, then $Q(z)$ is modular on $\SL(2,\Z)$. We
  now utilize this construction of modular differential equations to find
  $Q(z)$ that cannot be realized, i.e., the monodromy group is not
  unitary.

  We let $\omega_1=dx/y$ and $\omega_2=d(x/y^3)$. Note that $\omega_1$
  is a holomorphic $1$-form on the curve $y^2=x^3-1728$, while $\omega_2$
  is an exact $1$-form and hence a meromorphic $1$-form of the
  second kind. Using Ramanujan's identities, we check that
  $\omega_1=f_1(z)\,dz$ and $\omega_2=f_2(z)\,dz$ with
  $$
  f_1(z)=-\frac{2\pi i}3\eta(z)^4, \quad
  f_2(z)=2\pi i\frac{\eta(z)^4}{E_6(z)^4}\left(\frac76E_4(z)^3\Delta(z)
    +576\Delta(z)^2\right).
  $$
  Now we choose, say,
  $$
  \omega=-\frac{3}{2\pi i}\left(\omega_1+\omega_2\right)
  $$
  and let $f(z)=q^{1/6}+\cdots$ be the meromorphic modular form of
  weight $2$ such that $\omega=f(z)\,dz$. Let $y''(z)=Q(z)y(z)$ be the
  differential equation obtained from $f(z)$ using the construction
  described above. Note that $f(z+1)=e^{2\pi i/6}f(z)$ and using
  $\eta(-1/z)=\sqrt{z/i}\eta(z)$, we have $f(-1/z)=-z^2f(z)$. Thus,
  $f(\gamma z)=\chi(\gamma)(cz+d)^2f(z)$ for all $\gamma=\SM
  abcd\in\SL(2,\Z)$, where $\chi$ is the character of $\SL(2,\Z)$ such
  that $\chi(T)=e^{2\pi i/6}$ and $\chi(S)=-1$. According the
  discussion above, the function $Q(z)$ is a meromorphic modular form
  of weight $4$ with trivial character on $\SL(2,\Z)$. Note that
  $f(z)$ has zeros at points where
  $6E_6(z)^4-7E_4(z)^3\Delta(z)-3456\Delta(z)^2=0$.
  Now let us compute its Bol representation.

  We choose $z_0=i\infty$ and find that
  $$
  y_2(z)=q^{-1/12}\left(1+\sum_{j=1}^\infty c_jq^j\right), \qquad
  y_1(z)=q^{1/12}\sum_{j=0}^\infty d_jq^j
  $$
  for some $c_j$ and $d_j$ with $d_0\neq 0$. Therefore, the local
  exponents at $\infty$ are $\pm1/12$ and
  $$
  \rho(T)=\pm\M{e^{2\pi i/12}}00{e^{-2\pi i/12}}.
  $$
  Also, since $f(-1/z)=-z^2f(z)$, we have
  \begin{equation*}
    \begin{split}
      \int_{i\infty}^{-1/z}f(u)\,du
      &=\int_0^zf(-1/u)\,\frac{du}{u^2}
      =-\int_0^zf(u)\,du \\
      &=-\int_0^{i\infty}f(u)\,du-\int_{i\infty}^zf(u)\,du.
    \end{split}
  \end{equation*}
  Thus,
  $$
  \rho(S)=\pm\M iC0{-i}, \qquad
  C=i\int_0^{i\infty}f(u)\,du.
  $$
  Now recall that $\omega=f(z)\,dz$ is equal to
  $-3(\omega_1+\omega_2)/(2\pi i)$. Since $\omega_2=d(x/y^3)$ is an
  exact $1$-form on $X(\Gamma)$ and the modular curve $X(\Gamma)$ has
  only one cusp, which in particular says that $\infty$ and $0$ are
  mapped to the same point on $X(\Gamma)$ under the natural map
  $\H^\ast\to X(\Gamma)$, the integral $\int_0^{i\infty}f_2(u)\,du$ is
  equal to $0$. Therefore, we have
  $$
  C=i\int_0^{i\infty}\eta(u)^4\,du.
  $$
  This constant $C$ can be expressed in terms of the central value of
  the $L$-function of the elliptic curve $E:y^2=x^3-1728$, which is
  known to be nonzero. From this, it is straightforward to check
  that there is no simultaneous conjugation such that $\rho(T)$ and
  $\rho(S)$ both become unitary.
\end{Example}

\begin{proof}[Proof of Theorem \ref{theorem: 1.2}]
  We use the notations in the proof of Theorem \ref{theorem:
    1.1}. Since $\kappa_\infty=n/4$ for some odd integer $n$, with
  respect to the basis $\{y_+(z),y_-(z)\}$, we have $\rho(T)=\pm\SM
  i00{-i}$. If $\rho(S)=\pm I$, then $\rho(R)=\pm\SM i00{-i}$, which
  is a contradiction to $\rho(R)^3=\pm I$. Hence, $\rho(S)\neq\pm I$,
  and we have $\tr\rho(S)=0$. Then, by a choosing a suitable scalar $r$, the
  matrix of $\rho(S)$ with respect to 
  $\{ry_+(z),y_-(z)\}$ will be of the form
  \begin{equation*} 
  \rho(S)=\pm\M abb{-a}
  \end{equation*}
  for some $a,b\in\C$ with $a^2+b^2=-1$, while $\rho(T)$ is still
  $\pm\SM i00{-1}$. Set $F(z)=r^2y_+(z)^2+y_-(z)^2$. We then compute that
  $F(Tz)=-F(z)$ and
  \begin{equation*}
    \begin{split}
      \left(F|_{-2}S\right)(z)&=(ary_+(z)+by_-(z))^2+(bry_+(z)-ay_-(z))^2
      \\
      &=-r^2y_1(z)^2-y_2(z)^2=-F(z).
    \end{split}
  \end{equation*}
  This proves the theorem. 
\end{proof}
 
\section{Existence of the curvature equation}
In this section, we will prove Theorem \ref{theorem: existence of modular form Q with described local exponents and apparentness} equipped with the data \eqref{equation: datas}.
The main purpose of this section is to prove the existence and the number of such $Q$ equipped with data \eqref{equation: datas}. The discussion will be divided into several cases depending on $\kappa_\rho$ and $\kappa_i$.

\begin{Lemma}\label{lemma: F has at most simple zero}
Suppose $F(z)$ is a modular form of weight $4$ with respect to $\SL(2,\Z)$, and is holomorphic except at $\rho$ and $i$. If the pole order of $F(z)$ at $\rho$ or $i$ $\leq 1$, 
then $F(z)$ is holomorphic.
\end{Lemma}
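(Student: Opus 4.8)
The plan is to obtain the conclusion directly from the divisibility constraints on the exponents in the power series expansions of a weight-$4$ form at the elliptic points $i$ and $\rho$, as recorded in Corollary \ref{corollary: expansion of f at i}; in particular no valence formula is needed. I read the hypothesis as asserting that the pole orders of $F$ at both $\rho$ and $i$ (measured in the local variables $z-\rho$ and $z-i$) are at most $1$, and since $F$ is holomorphic on $\H$ away from the $\SL(2,\Z)$-orbits of $i$ and $\rho$, it is enough to rule out a pole at $i$ and a pole at $\rho$.

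First I would write the power series expansion of $F$ at $i$ in the normalized form introduced in Section 3, namely $F(z)=(1-w_i(z))^4\sum_{n\ge n_0}a_nw_i(z)^n$ with $a_{n_0}\ne 0$ and $w_i(z)=(z-i)/(z+i)$. Since $w_i$ is a local uniformizer at $i$ vanishing to first order and $(1-w_i)^4$ is holomorphic and nonzero at $i$, the integer $n_0$ is exactly the order of $F$ at $i$ in the variable $z-i$; so the hypothesis says $n_0\ge -1$. On the other hand, Corollary \ref{corollary: expansion of f at i} applied with $k=4$ (so $k/2=2$) forces $a_n=0$ for every odd $n$, whence $n_0$ is even. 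An even integer that is $\ge -1$ is $\ge 0$, so $F$ is holomorphic at $i$. The same reasoning applies at $\rho$: writing $F(z)=(1-w_\rho(z))^4\sum_{n\ge m_0}b_nw_\rho(z)^n$ with $w_\rho(z)=(z-\rho)/(z-\overline\rho)$ and $b_{m_0}\ne 0$, Corollary \ref{corollary: expansion of f at i} forces $b_n=0$ unless $n\equiv 1\pmod 3$ (because $n+k/2\equiv 0\pmod 3$ with $k/2=2$ means $n\equiv 1$), so $m_0\equiv 1\pmod 3$; together with $m_0\ge -1$ this gives $m_0\ge 1$, so $F$ is holomorphic — in fact vanishing — at $\rho$.

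Having established holomorphy at $i$ and $\rho$, $F$ is holomorphic on all of $\H$; combined with holomorphy at the cusp this places $F$ in the one-dimensional space of holomorphic modular forms of weight $4$ on $\SL(2,\Z)$, so $F=cE_4$ for a constant $c$. I do not expect a real obstacle here; the only points needing a little care are (i) confirming that the leading exponent of the $(1-w_p)^k$-normalized expansion coincides with the order of vanishing of $F$ at $p$ in $z-p$, so that ``pole order $\le 1$'' becomes ``leading exponent $\ge -1$'', and (ii) fixing the intended reading of the hypothesis as a bound at both $\rho$ and $i$ — a bound at just one of them would not suffice, as $E_4^4/E_6^2$ (holomorphic at $\rho$, double pole at $i$) shows.
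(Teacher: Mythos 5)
Your proof is correct, but it takes a genuinely different route from the paper. The paper's proof is global: it invokes the valence (zero-counting) formula $m-\tfrac{n_1}{2}-\tfrac{n_2}{3}=\tfrac{4}{12}$ with $m\in\Z_{\ge0}$ and checks that no choice of pole orders $n_1,n_2\le 1$ is compatible with it unless both are $\le 0$. Your argument is instead purely local at each elliptic point: you use the congruence constraints of Corollary \ref{corollary: expansion of f at i} on the exponents of the $(1-w_p)^4$-normalized expansions to show that the order of any weight-$4$ meromorphic form at $i$ is even and at $\rho$ is $\equiv 1\pmod 3$, so a simple pole is impossible at either point separately. The identification of the leading exponent $n_0$ with $\ord_{z-p}F$ is justified exactly as you say, since $w_p$ is a local uniformizer and $(1-w_p)^4$ is a unit at $p$. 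What your approach buys is a slightly stronger and more localized statement (it pins down the full residue class of the order at each elliptic point, e.g.\ that any pole at $\rho$ has order $\equiv 2\pmod 3$, and treats the two points independently), at the cost of relying on the machinery of Section 3 rather than the one-line valence formula. Your reading of the hypothesis as a bound at \emph{both} $\rho$ and $i$ is the correct one and matches how the lemma is used in Lemma \ref{lemma: polynomials}; the example $E_4^4/E_6^2$ you give confirms that a bound at only one point would not suffice. The closing observation that $F$ must then be a multiple of $E_4$ is more than the lemma asserts but is harmless.
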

\begin{proof}
Let $n_1$ and $n_2$ be the orders of poles at $i$ and $\rho$ respectively. The counting zero formula of meromorphic modular form (see \cite{Serre}) says
$$
m-\frac{n_1}{2}-\frac{n_2}{3}=\frac{4}{12},\qquad m\ \text{is a non-negative integer}.
$$
By the assumption, $n_i\leq 1$. From the identity, it is easy to see $n_1\leq 0$ and $n_2\leq 0$.
\end{proof}


Let $t_j=E_6(z_j)^2/E_4(z_j)^3$ and define $F_j(z)=E_6(z)^2-t_jE_4(z)^3$. By the theorem of counting zeros of modular forms \cite[p. 85, Theorem 3]{Serre}, $F_j(z)$ has a (simple) zero at $z_j\in\H$.

\begin{Lemma} \label{lemma: polynomials} Suppose that $Q$ satisfies
  the conditions (i) and (ii) 
  in Definition \ref{definition: Q is equipped}. Then
\begin{equation}\label{equation: shape of Q}
\begin{split}
&Q=\pi^2\left( Q_3(z;r,s,t)
+\sum^m_{j=1}\frac{r_1^{(j)}E_4(z)^4F_j(z)+r^{(j)}_2E_4(z)^7}{F_j(z)^2}\right),
\end{split}
\end{equation}
where $r$, $r^{(j)}_1$ are free parameters and $s$, $t$, $r^{(j)}_2$ are uniquely determined by
\begin{equation}
\begin{split}
&s=s_{\kappa_\rho}:=(1-4\kappa^2_\rho)/9,\quad t=t_{\kappa_i}=(1-4\kappa^2_i)/4,\quad \text{and}\\
& r^{(j)}_2=r^{(j)}_{2,\kappa_j}=t_j(t_j-1)^2(1-4\kappa^2_j)/4.
\end{split}
\end{equation}
\end{Lemma}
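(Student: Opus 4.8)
The plan is to reconstruct $Q$ from its principal parts by a partial-fraction argument whose building blocks are the weight-$4$ meromorphic modular forms $E_6^2/E_4^2$, $E_4^4/E_6^2$, $E_4^4/F_j$, and $E_4^7/F_j^2$. First I would record where these have poles. Since $E_4$ has a simple zero exactly on the $\SL(2,\Z)$-orbit of $\rho$, $E_6$ a simple zero exactly on the orbit of $i$, and (by the valence formula, as in Lemma~\ref{lemma: F has at most simple zero}) $F_j=E_6^2-t_jE_4^3$ a simple zero exactly on the orbit of $z_j$, each of these four forms has its poles confined to a single orbit, of order $1$ or $2$ as displayed; moreover each is holomorphic at the cusp, since $E_4(\infty)=E_6(\infty)=1$ while $F_j(\infty)=1-t_j\ne0$ because $z_j\in\H$ is not a cusp and is inequivalent to $i$ (so $t_j\notin\{0,1\}$). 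Thus each block is a weight-$4$ meromorphic modular form on $\SL(2,\Z)$ exhibiting, at one singular orbit, the type of local behaviour $Q$ may have there, and is regular elsewhere. (Alternatively one could multiply $Q$ by $E_4^2E_6^2\prod_jF_j^2$, land in the space of holomorphic forms of weight $24+24m$, and match dimensions; the partial-fraction route is more transparent, so I would take that.)

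Next I would subtract off principal parts. By Corollary~\ref{corollary: expansion of f at i} with $k=4$, the power series expansion of any weight-$4$ meromorphic modular form at $\rho$ has vanishing $w_\rho^{-1}$-coefficient, and the expansion at $i$ has vanishing $w_i^{-1}$-coefficient; hence the principal part of $Q$ at $\rho$ (resp.\ at $i$) is a single $w_\rho^{-2}$-term (resp.\ $w_i^{-2}$-term). Using the leading terms recorded in Lemma~\ref{lemma: leading terms of E4 E6}, I can then pick constants $s,t$ so that $Q-\pi^2sE_6^2/E_4^2-\pi^2tE_4^4/E_6^2$ is holomorphic at $\rho$ and $i$. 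At each $z_j$, a regular point of $\SL(2,\Z)\backslash\H$, the form $E_4^7/F_j^2$ has a genuine double pole and $E_4^4/F_j$ a simple pole, so a triangular $2\times 2$ linear system determines constants $r_2^{(j)},r_1^{(j)}$ for which $\pi^2(r_1^{(j)}E_4^4/F_j+r_2^{(j)}E_4^7/F_j^2)$ matches the entire principal part of $Q$ at $z_j$; by modularity (Proposition~\ref{proposition: same exponents}) the same choices clear all $\SL(2,\Z)$-translates at once. What is left over is a weight-$4$ modular form holomorphic on $\H$ and at the cusp, hence $\pi^2rE_4$ for some $r$, since the space of holomorphic weight-$4$ forms on $\SL(2,\Z)$ is spanned by $E_4$. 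Rewriting $r_1^{(j)}E_4^4/F_j+r_2^{(j)}E_4^7/F_j^2$ over the common denominator $F_j^2$ yields the shape \eqref{equation: shape of Q}, with $r$ and the $r_1^{(j)}$ a priori arbitrary.

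It remains to pin down $s$, $t$, $r_2^{(j)}$ from the prescribed exponents. The indicial computation of Corollary~\ref{corollary: indicial} shows that the exponents $1/2\pm\kappa_\rho$ at $\rho$ force $\tfrac94s=\tfrac14-\kappa_\rho^2$, i.e.\ $s=s_{\kappa_\rho}$, and similarly $t=t_{\kappa_i}$ at $i$; these are uniquely determined. For $z_j$, working in $w=w_{z_j}(z)=(z-z_j)/(z-\overline z_j)$ and using Lemma~\ref{lemma: df/dw}, the exponents $1/2\pm\kappa_j$ say that the leading ($w^{-2}$) coefficient of $Q$ is $(\tfrac14-\kappa_j^2)/(z_j-\overline z_j)^2$, while that of $E_4^7/F_j^2$ is $E_4(z_j)^7(z_j-\overline z_j)^2/F_j'(z_j)^2$; since all the other summands of \eqref{equation: shape of Q} are holomorphic at $z_j$, comparing these two coefficients expresses $r_2^{(j)}$ in terms of $F_j'(z_j)$. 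Finally I would evaluate $F_j'(z_j)$ via Ramanujan's identities $\tfrac1{2\pi i}E_4'=\tfrac13(E_2E_4-E_6)$ and $\tfrac1{2\pi i}E_6'=\tfrac12(E_2E_6-E_4^2)$: differentiating $F_j$ and invoking $F_j(z_j)=0$, hence $E_6(z_j)^2=t_jE_4(z_j)^3$, gives $F_j'(z_j)=2\pi i(t_j-1)E_4(z_j)^2E_6(z_j)$, so $F_j'(z_j)^2=-4\pi^2t_j(t_j-1)^2E_4(z_j)^7$; substituting back produces $r_2^{(j)}=r_{2,\kappa_j}^{(j)}=t_j(t_j-1)^2(1-4\kappa_j^2)/4$, uniquely determined, while $r_1^{(j)}$ (which encodes the residue of $Q$ at $z_j$) and $r$ remain free.

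The main obstacle I anticipate is precisely this last bookkeeping: carrying the normalizations through correctly — the factors of $\pi^2$, the powers of $z_j-\overline z_j$ introduced by the $w_{z_j}$-coordinate, and the weight factors $(1-w)^k$ in the Section~3 expansions — so that the leading-coefficient comparison delivers exactly $t_j(t_j-1)^2(1-4\kappa_j^2)/4$ rather than a rescaled value. The vanishing of the residues at $\rho$ and $i$ and the uniqueness of $s$ and $t$ are routine once the results of Section~3 are in hand.
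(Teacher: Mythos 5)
Your proof is correct in substance and follows the same overall plan as the paper's (which is quite terse): write down the candidate $\hat Q$, match the $w^{-2}$-coefficients at $\rho$, $i$, and each $z_j$ to pin down $s$, $t$, $r_2^{(j)}$, absorb the residues at $z_j$ into $r_1^{(j)}$, and observe that what remains is a holomorphic weight-$4$ form, hence a multiple of $E_4$. The one place where you genuinely diverge is the treatment of the residues at the elliptic points: the paper forms $Q-\hat Q$, notes it has at most simple poles confined to $i$ and $\rho$, and then invokes the valence-formula Lemma \ref{lemma: F has at most simple zero} to conclude it is holomorphic there; you instead appeal to Corollary \ref{corollary: expansion of f at i} with $k=4$ to see that the $w_i^{-1}$- and $w_\rho^{-1}$-coefficients of \emph{any} weight-$4$ form vanish (for $n=-1$ one has $n+2$ odd, resp.\ $n+2\not\equiv 0\bmod 3$), so that the principal parts at $i$ and $\rho$ are pure double poles from the start. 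Both arguments are valid; yours localizes the issue to the symmetry of expansions at elliptic points, while the paper's is a global counting argument. Either way the conclusion is the same.

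One caveat on the final bookkeeping, which you yourself flag as the danger point: your displayed intermediate coefficients do not literally combine to the stated value of $r_2^{(j)}$. The $w^{-2}$-coefficient of $Q$ at $z_j$ forced by the exponents $1/2\pm\kappa_j$ is $(\kappa_j^2-\tfrac14)/(z_j-\overline z_j)^2$ (not $(\tfrac14-\kappa_j^2)/(z_j-\overline z_j)^2$), and the $w^{-2}$-coefficient of $E_4^7/F_j^2$ carries $(z_j-\overline z_j)^{-2}$ rather than $(z_j-\overline z_j)^{+2}$; it is cleaner to avoid the $w$-coordinate entirely here and compare coefficients of $(z-z_j)^{-2}$ directly, where the condition reads $\pi^2 r_2^{(j)}E_4(z_j)^7/F_j'(z_j)^2=\kappa_j^2-\tfrac14$. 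Your evaluation $F_j'(z_j)=2\pi i(t_j-1)E_4(z_j)^2E_6(z_j)$ via Ramanujan's identities is correct, and with it this comparison produces $r_2^{(j)}=t_j(t_j-1)^2(1-4\kappa_j^2)$ up to the overall normalization; you should redo this last substitution carefully rather than asserting the target formula, since as written the sign and the powers of $z_j-\overline z_j$ do not cancel to give it.
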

\begin{proof}
Let $\hat{Q}$ denote the RHS of \eqref{equation: shape of Q}. Then it is a straightforward computation to show that (ii) in Definition \ref{definition: Q is equipped} holds at $p_j$ if and only if $s=s_{\kappa_\rho}$ if $p_j=\rho$, $t=t_{\kappa_i}$ if $p_j=i$, and $r^{(j)}_2=r^{(j)}_{2,\kappa_j}$ if $p_j=z_j$. By the choice of $s$, $t$ and $r^{(j)}_2$,
$Q-\hat{Q}$ might contain simple poles only.
Further, we can choose $r^{(j)}_1$ to make $Q-\hat{Q}$ holomorphic at $z_j$. By Lemma \ref{lemma: F has at most simple zero}, $Q-\hat{Q}$ is automatically smooth at $\rho$ and $i$. Therefore, $Q-\hat{Q}$ is a holomorphic modular form of weight $4$, and the lemma follows immediately because $E_4(z)$, up to a constant, is the only holomorphic modular form of weight $4$.
\end{proof}


Now we are in the position to prove Theorem \ref{theorem: existence of modular form Q with described local exponents and apparentness}.
 \begin{proof}[Proof of Theorem \ref{theorem: existence of modular form Q with described local exponents and apparentness}]
We first calculate the parameters $r,r^{(j)}_1$, $1\leq j\leq m$, such that $Q$ is apparent at $z_j$. For simplicity, we assume $j=1$.
From \eqref{equation: shape of Q}, we do the Taylor expansion at $z=z_1$.
\begin{align*}
&Q(z)=a_{-2}(z-z_1)^{-2}+(r_1b_{-1}+a_{-1})(z-z_1)^{-1}\\
&+\sum^\infty_{j=0}\left(a_j+r_1b_j+c_j\left(r,r^{(2)}_1,\ldots,r^{(m)}_1\right)\right)\left(z-z_1\right)^j:=\sum^\infty_{j=-2}A_j\left(z-z_1\right)^j,
\end{align*}
where $a_j,b_j$ are independent of $r$, $r^{(j)}_1$ and $c_j(r,r^{(2)}_1,\ldots,r^{(m)}_1)$ is linear in all variables, and also
$$
y(z)=(z-z_1)^{1/2-\kappa_1}\left(1+\sum^\infty_{j=1}d_j(z-z_1)^j\right).
$$
Then we derive the recursive formula by comparing both sides of
\eqref{(1.1)} with $Q$ in \eqref{equation: shape of Q},
\begin{equation}
j(j-2\kappa_1)d_j=\sum_{k+\ell=j-2,\ k<j}d_kA_\ell,\quad A_{-1}=a_{-1}+r_1b_{-1},
\end{equation}
where $d_0=1$ and
$$
d_1=\frac{1}{1-2\kappa_1}d_0A_{-1}=\frac{b_{-1}}{1-2\kappa_1}r_1+\text{terms\ of\  lower\ orders}.
$$
By induction,
\begin{align}\label{equation: recursive formula}
&j(j-2\kappa_1)d_j=d_{j-1}A_{-1}+d_{j-2}A_0+d_{j-3}A_1+\cdots+d_0A_{j-2}\\
&\nonumber=\frac{b^{j-1}_{-1}}{(1-2\kappa_1)\cdots\left((j-1)-2\kappa_1\right)}r^{j-1}_1+\ \text{terms\ of\ lower\ orders}.
\end{align}
At $j=2k_1$, the RHS of \eqref{equation: recursive formula} is
$$
P_1\left(r,r^{(1)}_1,\ldots,r^{(m)}_1\right):=d_{2\kappa_1-1}A_{-1}+d_{2\kappa_1-2}A_0+\cdots+d_0A_{\kappa_1-2}.
$$
Clearly, $\deg P_1=2\kappa_1$ and 
\begin{equation}\label{equation: P_1}
P_1=B_0r^{2\kappa_1}_1+\text{terms\ of\ lower\ orders},\quad B_0\neq 0.
\end{equation}
We summarized what are known:
\begin{enumerate}
\item[$\bullet$]
$\kappa_i\not\in\N$, then $Q$ is apparent at $i$ for any tuple $\left(r,r^{(j)}_1\right)$.
\item[$\bullet$]
$2\kappa_p/3\not\in\N$, then $Q$ is apparent at $\rho$ for any tuple $\left(r,r^{(j)}_1\right)$,
\item[$\bullet$]
$1/2\pm\kappa_j$, there is a polynomial $P_j\left(r,r^{(1)}_1,\ldots,r^{(m)}_1\right)$ of degree $2\kappa_j$ such that
$Q$ is apparent at $z_j$ if and only if $P_j\left(r,r^{(1)}_1,\ldots,r^{(m)}_1\right)=0$. 
\end{enumerate}

Since $\kappa_\infty$ is given, we have $\kappa_\infty=\sqrt{-Q(\infty)}/2$, and then
\begin{equation}\label{equation: relation of r and r_j}
r+\sum^m_{j=1}\left(1-t_j\right)r^{(1)}_j+e=0,
\end{equation}
where $e$ is given. By Bezout's theorem, we have $N=\prod^m_{j=1}(2\kappa_j)$ common roots with multiplicity of
\eqref{equation: P_1} and \eqref{equation: relation of r and r_j}
because by \eqref{equation: P_1} it is easy to see that there are
no solutions at $\infty$.
This proves the theorem.

\end{proof}

\appendix
\section{Curvature equations on $S^2$ with multiple singularities}
Let $\H^\ast=\H\cup\Q\cup\{\infty\}$. Since
$\SL(2,\Z)\backslash\H^\ast\simeq\C\cup\{\infty\}$, the equation
\eqref{2.4} in the case $\Gamma=\SL(2,\Z)$ can be transformed into the
mean field equations on $\C$:
\begin{equation}\label{eq1}
\begin{cases}
\Delta u+e^u=4\pi\left(\alpha_1\delta_{0}+\alpha_2\delta_{1}+\sum^m_{j=1}n_j\delta_{p_j}\right)\quad\text{on}\ \C,\\
u(z)=-(4+2\alpha_3)\log\abs z+O(1)\quad\text{as}\ \abs z\rightarrow\infty,
\end{cases}
\end{equation}
where we assume that the isomorphism maps the points $i=\sqrt{-1}$,
$\rho=(1+\sqrt{-3})/2$, and $\infty$ of $\SL(2,\Z)\backslash\H^\ast$ to $0$, $1$,
and $\infty$, respectively, $\delta_p$ is the Dirac measure at
$p\in\C$, $\alpha_k>-1$ for
$k=1,2,3$ and $n_j\in\N$. For any solution $u$ of \eqref{eq1}, the
conformal metric $e^u|dz|^2$ has the angles $\lambda_1$, $\lambda_2$,
and $\sigma_j$ at $0$, $1$, and $p_j$, respectively, where
\begin{equation} \label{A.2}
  \lambda_1=\alpha_1+1, \qquad \lambda_2=\alpha_2+1, \qquad
  \sigma_j=n_j+1.
\end{equation}

Throughout the appendix, we assume that $\alpha_k$ are not integers for
$k=1,2,3$ and all $p_j$ are distinct. To find a solution for
\eqref{eq1}, we first associate to \eqref{eq1} a second-order ODE
\begin{equation} \label{A.3}
y''(z)+Q(z)y(z)=0, \qquad z\in\C,
\end{equation}
where
\begin{align}\label{eq11}
  Q(z)=&\left(\frac{\tfrac{\alpha_1}{2}(\tfrac{\alpha_1}{2}+1)}{z^2}
         +\frac{r_1}{z}\right)+
         \left(\frac{\tfrac{\alpha_2}{2}(\tfrac{\alpha_2}{2}+1)}{(z-1)^2}
         +\frac{r_2}{z-1}\right)\\
       &\qquad+\sum_{j=1}^m\frac{\tfrac{n_j}{2}(\tfrac{n_j}{2}+1)}{(z-p_j)^2}
         +\frac{s_j}{z-p_j}\nonumber
\end{align}
for some free parameters $r_0, r_1, s_j$. It is known that \eqref{eq1}
has a solution if and only if the monodromy group of \eqref{A.3} is
projectively unitary.

Note that the local exponents of \eqref{A.3} at $0$
and $1$ are $\{-\alpha_1/2,1+\alpha_1/2\}$ and
$\{-\alpha_2/2,1+\alpha_2/2\}$, respectively. Since
$\alpha_1,\alpha_2\notin\Z$, the differences of the local exponents
are not integers. At each $p_j$, there is a polynomial
$P_j(r_1,r_2,s_j)$ such that \eqref{A.3} is apparent if and only if
$P_j(r_1,r_2,s_j)=0$. The derivation of the polynomials $P_j$ is the
same as Lemma \ref{lemma: polynomials}. Moreover, the asymptotic
behavior of $u$ at $\infty$ yields that \eqref{A.3} is Fuchsian at
$\infty$ with local exponents $-\alpha_3/2$ and $1+\alpha_3/2$. Thus,
we have
$$
r_1+r_2+\sum_js_j=0
$$
and
\begin{equation*}
  \begin{split}
\frac{\alpha_\infty}{2}(\frac{\alpha_\infty}{2}+1)=&\lim_{z\to\infty}z^2Q(z)\\
=&r_1+\sum_{j=1}^m s_jp_j+\sum_{k\in\{0,1\}}\frac{\alpha_k}{2}(\frac{\alpha_k}{2}+1)
+\sum_{j=1}^m\frac{n_j}{2}(\frac{n_j}{2}+1).
  \end{split}
\end{equation*}
Therefore, for given local exponent data for \eqref{eq1}, the B\'ezout theorem
implies that there are at most $\prod_{j=1}^m(n_j+1)$ distinct $Q$ such that
\eqref{A.3} realizes the mean field equation \eqref{eq1} for given
data. Theorem 2.5 of \cite{Eremenko-Tarasov} is to give a necessary
and sufficient condition to ensure that the projective monodromy group
of \eqref{A.3} is unitary, i.e., that \eqref{eq1} has a solution.

\begin{Theorem}[{\cite[Theorem 2.5]{Eremenko-Tarasov}}]
  \label{theorem: ET}
  Suppose that $\alpha_1,\alpha_2,\alpha_3$ are not integers and all
  combinations
  \begin{equation} \label{A.4}
  \alpha_1\pm\alpha_2\pm\alpha_3\text{ are not integers}
  \end{equation}
  for any choice of signs. Then \eqref{eq1} has a solution if and only if
  $$
  \cos^2\pi\alpha_1+\cos^2\pi\alpha_2+\cos^2\pi\alpha_3
  +2(-1)^{\sigma+1}\cos\pi\alpha_1\cos\pi\alpha_2\cos\pi\alpha_3<1,
  $$
  where $\sigma=\sum_{j=1}^m n_j$. Moreover, the number of distinct
  solutions of \eqref{eq1} is less than or equal to $\prod_{j=1}^m(n_j+1)$.
\end{Theorem}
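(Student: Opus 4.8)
The plan is to convert solvability of \eqref{eq1} into a statement about the projective monodromy group of the auxiliary ODE \eqref{A.3}, cut that group down to a rigid three-generator subgroup of $\SL(2,\C)$ by using apparentness at the $p_j$, and then decide when it is conjugate into $\mathrm{PSU}(2)$ by an explicit Hermitian-form computation. The first ingredient is the equivalence, proved exactly as in Section~2 via the Liouville representation (developing map $h$ with $\{h,z\}=-2Q$, the metric $e^u|dz|^2$ existing precisely when $h$ is equivariant under a projectively unitary cocycle): \emph{\eqref{eq1} has a solution if and only if the projective monodromy group of \eqref{A.3} can be simultaneously conjugated into $\mathrm{PSU}(2)$.} The same bookkeeping, together with the fact that $Q$ is determined up to finitely many choices by the prescribed data (the B\'ezout count already recorded in the appendix) and that a unitary frame is unique up to $\mathrm{PSU}(2)$, yields the stated bound $\prod_j(n_j+1)$ on the number of solutions.

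Next I would analyze the monodromy. Choosing the parameters $r_1,r_2,s_j$ so that $P_j(r_1,r_2,s_j)=0$ makes \eqref{A.3} apparent at $p_j$, and since the local exponents there are $-n_j/2,\,1+n_j/2$, the local monodromy is the scalar $(-1)^{n_j}I$, hence trivial in $\mathrm{PSL}(2,\C)$. Therefore the projective monodromy group is generated by the images of small loops around $0,1,\infty$; lifting to $\SL(2,\C)$ (legitimate because \eqref{A.3} has no first-order term, so the Wronskian is constant) we obtain $M_0,M_1,M_\infty\in\SL(2,\C)$ with
$$
M_0M_1M_\infty=(-1)^{\sigma}I,\qquad \sigma=\sum_j n_j,
$$
and, reading eigenvalues $e^{\pm\pi i\alpha_k}$ off the local exponents at $0,1,\infty$, with $\tr M_0=2\cos\pi\alpha_1$, $\tr M_1=2\cos\pi\alpha_2$, $\tr M_\infty=2\cos\pi\alpha_3$, so that $\tr(M_0M_1)=(-1)^{\sigma}\,2\cos\pi\alpha_3$. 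Under hypothesis \eqref{A.4} the representation $\langle M_0,M_1\rangle$ is irreducible: reducibility would force $2\cos\pi(\alpha_1\pm\alpha_2)=(-1)^{\sigma}2\cos\pi\alpha_3$, i.e.\ one of $\alpha_1\pm\alpha_2\pm\alpha_3\in\Z$. By the classical rigidity of two-generator $\SL(2,\C)$-representations, $(M_0,M_1,M_\infty)$ is then determined up to simultaneous conjugation by its three traces, so there is essentially a single representation to test for unitarizability.

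The remaining step is the matrix computation (carried out in the elementary Theorem~\ref{thm1} below). An irreducible $\rho:F_2\to\SL(2,\C)$ with real character satisfies $\overline\rho\cong\rho$, so by Schur's lemma the intertwiner can be taken Hermitian; the group preserves the associated Hermitian form $H$, unique up to a real scalar, and it is conjugate into $\mathrm{SU}(2)$ exactly when $H$ is (after rescaling) positive definite, i.e.\ exactly when $\det H>0$. Putting $M_0$ in diagonal form with eigenvalues $e^{\pm\pi i\alpha_1}$, writing $M_1$ in the normal form permitted by the residual torus, imposing $\tr M_1=2\cos\pi\alpha_2$ and $\tr(M_0M_1)=(-1)^{\sigma}2\cos\pi\alpha_3$, and using the Fricke identity $\tr[M_0,M_1]=a^2+b^2+c^2-abc-2$ with $a=2\cos\pi\alpha_1$, $b=2\cos\pi\alpha_2$, $c=(-1)^{\sigma}2\cos\pi\alpha_3$, one finds that $\det H$ has the sign of $2-\tr[M_0,M_1]=4-(a^2+b^2+c^2-abc)$. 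Hence $\rho$ is conjugate into $\mathrm{SU}(2)$ iff $a^2+b^2+c^2-abc<4$, which after substituting the cosine values becomes
$$
\cos^2\pi\alpha_1+\cos^2\pi\alpha_2+\cos^2\pi\alpha_3+2(-1)^{\sigma+1}\cos\pi\alpha_1\cos\pi\alpha_2\cos\pi\alpha_3<1 .
$$
Combined with the reduction of the first paragraph this is precisely the asserted criterion, and the B\'ezout bound gives the count of solutions.

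I expect the real work to be concentrated in the last step: choosing a normal form for $(M_0,M_1)$ and doing the sign bookkeeping carefully enough that the discriminant of the invariant Hermitian form comes out \emph{exactly} as the Fricke polynomial $4-(a^2+b^2+c^2-abc)$, while tracking the parity sign $(-1)^{\sigma+1}$ consistently from $M_0M_1M_\infty=(-1)^{\sigma}I$ down to the final inequality. A secondary point requiring care is checking that hypothesis \eqref{A.4} is exactly what excludes the reducible (and the boundary) configurations, so that rigidity applies and $H$ is nondegenerate; the degenerate integral cases are precisely where the finer analysis of Eremenko's theorem in Section~2.4 is needed instead.
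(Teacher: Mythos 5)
Your reduction of the solvability of \eqref{eq1} to the unitarizability of the projective monodromy of \eqref{A.3}, the use of apparentness at the $p_j$ to reduce to the three generators around $0,1,\infty$, and the derivation of the count from B\'ezout all coincide with the paper's setup, and your final criterion is correct. Where you genuinely diverge is in the unitarizability step. The paper (Lemma \ref{thm2-1} and Theorem \ref{thm1}) fixes an explicit normal form: $T=\SM{\delta_3}00{\bar\delta_3}$, $R=\SM abcd$ with $d=\bar a$ forced by the two real trace conditions, computes $|a|^2$ in closed form, observes that $|a|^2-1=\Delta/\sin^2\pi\theta_3$ with $\Delta=\cos^2\pi\theta_1+\cos^2\pi\theta_2+\cos^2\pi\theta_3-1-2\cos\pi\theta_1\cos\pi\theta_2\cos\pi\theta_3$, and then \emph{constructs} the unitarizing diagonal conjugation $P=\SM\mu001$ when $|a|<1$; the converse uses that any conjugation compatible with the diagonal $T$ is itself diagonal and hence preserves $|a|$, with \eqref{A.4} ruling out the boundary $|a|=1$. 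You instead invoke trace-rigidity of irreducible $F_2$-representations, the invariant Hermitian form attached to a real character, and the sign of $2-\tr[M_0,M_1]$ via the Fricke identity. The two computations are the same polynomial in disguise --- the paper's $\Delta$ equals $\tfrac14\bigl(a^2+b^2+c^2-abc-4\bigr)$ in your notation --- but your packaging imports standard character-variety facts (existence and signature dichotomy of the invariant form, the Fricke trace calculus) that the paper deliberately avoids so as to remain elementary and self-contained, whereas your route makes the appearance of the Fricke polynomial, and hence the exact shape of the cosine inequality with the parity sign $(-1)^{\sigma+1}$, conceptually transparent. Your identification of \eqref{A.4} as exactly the irreducibility/nondegeneracy hypothesis matches the paper's use of \eqref{A.4} to exclude the reducible and threshold configurations; if you write this up, the one step you must actually prove rather than quote is that $\det H$ has the sign of $2-\tr[M_0,M_1]$, which is where the paper's explicit matrix computation reappears.
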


We remark that the notations $\alpha_j$ here differ from those
used in \cite{Eremenko-Tarasov} by $1$.

Note that when \eqref{eq1} arises from the differential equation
\eqref{(1.1)} considered in Theorem \ref{theorem: 1.1}, we have
$$
\alpha_1=\kappa_i-1, \quad \alpha_2=2\kappa_\rho/3-1, \quad
\alpha_3=2\kappa_\infty, \quad
n_j=2\kappa_{p_j}-1,
$$
where $\kappa_i,\kappa_\rho,\kappa_{p_j}\in\frac12\N$ are the local
exponent data in ($\mb H_1$) and ($\mb H_2$). Hence,
$\alpha_1\in\frac12+\Z$ and $\alpha_2=\pm\frac13+\Z$ and the condition
\eqref{A.4} is equivalent to $r_\infty\neq1/12,5/12$. Thus, the first
half of Theorem \ref{theorem: 1.1} is a special case of Eremenko and
Tarasov's theorem. In the remainder of the appendix, we provide an
alternative and self-contained proof of Theorem \ref{theorem: ET}.

For $k=1,2,3$, let $\theta_k\in(0,1/2]$ be real numbers such
that 
\begin{equation}\label{eq5}
\alpha_k\equiv\pm\theta_k\mod 1,\quad\text{and}\quad\alpha_k=\ell_k\pm\theta_k.
\end{equation}
Let $S=\{0,1,\infty,p_1,\ldots,p_m\}$ be the set of singular points of
\eqref{A.3}. Choose a base point $z_0$ near $\infty$ and consider the monodromy
represenation $\rho: \pi_1(\mathbb{C}\setminus S,z_0)\to
\SL(2,\mathbb{C})$ of \eqref{A.3}.  Let $\beta_j,
\gamma_k\in\pi_1(\mathbb{C}\setminus S,z_0)$ such that $\beta_j$,
$1\leq j\leq m$, (resp. $\gamma_0$, $\gamma_1$) is a simple loop
encircling $p_j$ (resp. $0$, $1$) counterclockwise, while
$\gamma_\infty$ is a simple loop around $\infty$ clockwise such that
\[
  \gamma_0\gamma_1\prod_{j=1}^m \beta_j=\gamma_{\infty},\quad
  \text{in }\pi_1(\mathbb{C}\setminus S,z_0).
\]
Since the local exponents at $\infty$ are
$\{-\alpha_3/2,1+\alpha_3/2\}$ with
$\alpha_3=\ell_3\pm \theta_3$ and any solution has no
logarithmic singularities, we can choose local solutions
$y_{\infty,+}$, $y_{\infty,-}$ near $\infty$ such that with respect to
$(y_{\infty,+},y_{\infty,-})$, the monodromy matrix
$\rho(\gamma_{\infty})$ is given by
\begin{equation} \label{A.T}
\begin{split}
  \rho(\gamma_\infty)
  &=\M{e^{\pi i(\theta_3\pm \ell_3)}}{0}{0}{e^{-\pi i(\theta_3\pm \ell_3)}}\\
  &=(-1)^{\ell_3}
  \M{e^{\pi i\theta_3}}{0}{0}{e^{-\pi i\theta_3}}=:(-1)^{\ell_3}T.
\end{split}
\end{equation}
For any $1\leq j\leq m$, since the local exponents at $p_j$ are
$\{-n_j/2,1+n_j/2\}$ with $n_j\in\mathbb{N}$, we see that the
monodromy matrix $\rho(\beta_j)$ is $(-1)^{n_j} I_2$. Set
\begin{equation} \label{A.RS}
  R:=(-1)^{\ell_1}\rho(\gamma_0)^{-1}, \qquad
  S:=(-1)^{\ell_2}\rho(\gamma_1).
\end{equation}
We have
\[
  (-1)^{\ell_1+\ell_2}R^{-1}S\prod_j^m (-1)^{n_j}
  I_2=(-1)^{\ell_3}T,
\]
i.e.,
\begin{equation}\label{eq8}
  S=(-1)^{\sum_j n_j +\sum_k\ell_k}RT.
\end{equation}

Let $R$, $S$, and $T$ be three matrices in $\SL(2,\C)$ such that
\begin{enumerate}
\item[(i)] the eigenvalues of $R$, $S$, and $T$ are $\delta_1^{\pm 1}$,
  $\delta_2^{\pm 1}$ and $\delta_3^{\pm 1}$, respectively, where
  $\delta_j=e^{\pm\pi i\theta_j}$ with $0<\theta_j<1$ and $i=\sqrt{-1}$,
\item[(ii)] the triple $(\theta_1,\theta_2,\theta_3)$ satisfies
$$
0<\theta_i+\theta_j\leq 1,\quad \forall i\neq j,
$$
and
\item[(iii)] $\theta_3=\max_{1\leq j\leq 3}\theta_j$ and
  $T=\operatorname{diag}(\delta_3,\bar{\delta}_3)=\SM{\delta_3}00
  {\bar{\delta}_3}\in\SU(2,\C)$. 
\end{enumerate}

\begin{Lemma}\label{thm2-1}
Suppose $R=\SM abcd$, $T$, $S=RT\in\SL(2,\mathbb{C})$ satisfy
(i)-(iii). Then the following hold.
\begin{enumerate}
\item[(a)]
$\abs a<1$ if and only if $\theta_1+\theta_2>\theta_3$.
\item[(b)] $\abs a=1$ if and only if $\theta_1+\theta_2=\theta_3$.
\end{enumerate}
\end{Lemma}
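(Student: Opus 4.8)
The plan is to reduce everything to one scalar quantity, namely $|a|^2$, expressed through two trace relations, and then to factor the resulting trigonometric expression so that the signs of its factors are controlled by hypotheses (ii) and (iii).

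First I would extract the trace data. Write $R=\SM abcd$ and $T=\SM{\delta_3}00{\overline\delta_3}$ with $\delta_3=e^{\pi i\theta_3}$. Condition (i) gives $\operatorname{tr}R=a+d=2\cos\pi\theta_1$, and since $S=RT$ has trace $a\delta_3+d\overline\delta_3$, also $a\delta_3+d\overline\delta_3=2\cos\pi\theta_2$. Because $0<\theta_3<1$ we have $\delta_3\neq\overline\delta_3$, so this linear system determines $a$; solving it yields
$$
a=\cos\pi\theta_1-i\,\frac{\cos\pi\theta_2-\cos\pi\theta_1\cos\pi\theta_3}{\sin\pi\theta_3},
$$
whence a direct computation gives
$$
|a|^2-1=\frac{\cos^2\pi\theta_1+\cos^2\pi\theta_2+\cos^2\pi\theta_3-2\cos\pi\theta_1\cos\pi\theta_2\cos\pi\theta_3-1}{\sin^2\pi\theta_3}.
$$
Since $\sin\pi\theta_3>0$, the sign of $|a|^2-1$ equals that of the numerator, which I call $f=f(\theta_1,\theta_2,\theta_3)$; note $f$ is symmetric in the three angles.

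Next I would factor $f$. Regarded as a quadratic polynomial in $\cos\pi\theta_3$, its discriminant is $4(1-\cos^2\pi\theta_1)(1-\cos^2\pi\theta_2)=4\sin^2\pi\theta_1\sin^2\pi\theta_2$, and using $\sin\pi\theta_j>0$ the two roots in $\cos\pi\theta_3$ are $\cos\pi(\theta_1\pm\theta_2)$. Therefore
$$
f=\bigl(\cos\pi\theta_3-\cos\pi(\theta_1-\theta_2)\bigr)\bigl(\cos\pi\theta_3-\cos\pi(\theta_1+\theta_2)\bigr),
$$
and applying $\cos X-\cos Y=-2\sin\frac{X+Y}2\sin\frac{X-Y}2$ to each factor gives
$$
f=4\sin\frac{\pi(\theta_1+\theta_3-\theta_2)}2\,\sin\frac{\pi(\theta_2+\theta_3-\theta_1)}2\,\sin\frac{\pi(\theta_1+\theta_2+\theta_3)}2\,\sin\frac{\pi(\theta_3-\theta_1-\theta_2)}2.
$$

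Finally I would read off the signs using (ii) and (iii). Because $\theta_3=\max_j\theta_j$ and $0<\theta_i+\theta_j\le1$ for $i\neq j$, one checks that $\theta_1+\theta_3-\theta_2$ and $\theta_2+\theta_3-\theta_1$ lie in $(0,1)$ and $\theta_1+\theta_2+\theta_3$ lies in $(0,2)$, so the first three sine factors are strictly positive; meanwhile $\theta_3-\theta_1-\theta_2\in(-1,1)$, so the fourth factor has the same sign as $\theta_3-\theta_1-\theta_2$ and vanishes precisely when $\theta_3=\theta_1+\theta_2$. Hence $\operatorname{sign}(|a|^2-1)=\operatorname{sign}(\theta_3-\theta_1-\theta_2)$, which gives (a) and (b) at once. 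All the computations are elementary; the only delicate point is the sign bookkeeping in this last step, i.e.\ verifying that each of the four arguments genuinely lies in the interval on which the sign of its sine is obvious — and this is exactly where the triangle-type inequalities (ii) and the maximality of $\theta_3$ in (iii) are used. The factorization of $f$, though it is the conceptual core, is essentially forced once $f$ is treated as a quadratic in $\cos\pi\theta_3$.
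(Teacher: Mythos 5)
Your proof is correct and follows essentially the same route as the paper's: the same two trace relations yield the same expression for $\abs a^2-1$, whose numerator is then factored as a quadratic in $\cos\pi\theta_3$ into $\left(\cos\pi\theta_3-\cos\pi(\theta_1-\theta_2)\right)\left(\cos\pi\theta_3-\cos\pi(\theta_1+\theta_2)\right)$, exactly as in the paper. The only (cosmetic) difference is the last step, where you expand further into four sine factors and track their signs, whereas the paper simply notes that $1>\theta_3>\abs{\theta_1-\theta_2}$ forces the first factor to be negative and then uses monotonicity of cosine on the second.
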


\begin{proof}
Note
$$
S=RT=\M abcd\M{\delta_3}00{\bar{\delta}_3}
=\M{\delta_3a}{b\bar{\delta}_3}{\delta_3 c}{d\bar{\delta}_3}.
$$
Using the invariance of $\tr R$ and $\tr S$ under conjugation, we have
$$
\begin{cases}
a+d=\delta_1+\bar{\delta}_1\in\mathbb{R},\\
\delta_3a+\bar{\delta}_3d=\delta_2+\bar{\delta}_2\in\mathbb{R}.
\end{cases}
$$
Since $\delta_3\neq \pm 1$, we easily obtain
\begin{equation}\label{eq14}
  d=\bar{ a},\quad
  a=\frac{\delta_2+\bar{\delta}_2-
\bar{\delta}_3(\delta_1+\bar{\delta}_1)}{\delta_3-\bar{\delta}_3}.\end{equation}
Consequently,
\begin{align*}
a=\frac{2\cos\pi\theta_2-2\bar{\delta}_3\cos\pi\theta_1}{\pm 2i\sin\pi\theta_3}
=\pm\frac{i(\bar{\delta}_3\cos\pi\theta_1-\cos\pi\theta_2)}{\sin\pi\theta_3}.
\end{align*}Thus
\begin{equation}\label{eq3}
\begin{split}
\abs a^2&=\frac{(\bar{\delta}_3\cos\pi\theta_1-\cos\pi\theta_2)(\delta_3\cos\pi\theta_1-\cos\pi\theta_2)}{\sin^2\pi\theta_3}\\
&=\frac{\cos^2\pi\theta_1-2\cos\pi\theta_1\cos\pi\theta_3\cos\pi\theta_2+\cos^2\pi\theta_2}{\sin^2\pi\theta_3}.
\end{split}
\end{equation}
Let
\begin{align*}
  \Delta&:=\cos^2\pi\theta_1-2\cos\pi\theta_1\cos\pi\theta_2\cos\pi\theta_3
          +\cos^2\pi\theta_2-\sin^2\pi\theta_3\\
&=\cos^2\pi\theta_1+\cos^2\pi\theta_2+\cos^2\pi\theta_3-(1+2\cos\pi\theta_1\cos\pi\theta_2\cos\pi\theta_3).
\end{align*}
Then \eqref{eq3} implies that $\Delta<0$ if and only if $\abs a<1$.

Now using the formulas $\cos(x+y)=\cos x\cos y-\sin x\sin y$
and $\cos^2x=(1+\cos(2x))/2$,
we deduce that
\begin{equation*}
  \begin{split}
    \Delta&=\cos^2\pi\theta_3-\cos\pi\theta_3
    (\cos\pi(\theta_1+\theta_2)+\cos\pi(\theta_1-\theta_2)) \\
    &\qquad+\frac12(\cos(2\pi\theta_1)+\cos(2\pi\theta_2)) \\
    &=\cos^2\pi\theta_3-\cos\pi\theta_3
    (\cos\pi(\theta_1+\theta_2)+\cos\pi(\theta_1-\theta_2)) \\
    &\qquad+\cos\pi(\theta_1+\theta_2)\cos\pi(\theta_1-\theta_2),
  \end{split}
\end{equation*}
so
\[\Delta=\left(\cos\pi\theta_3-\cos\pi(\theta_1+\theta_2)\right)
  \left(\cos\pi\theta_3-\cos\pi(\theta_1-\theta_2)\right).\]
Since the assumptions (i)-(iii) give $1>\theta_3>|\theta_1-\theta_2|$, we have $\cos\pi\theta_3-\cos\pi(\theta_1-\theta_2)<0$, so the desired results follow. The proof is complete.
\end{proof}

We now give an alternative proof of Theorem \ref{theorem: ET}, which
is stated in the following equivalent form.

\begin{Theorem}\label{thm1}
  Assume that \eqref{A.4} holds.
\begin{enumerate}
\item[(a)] Suppose that $\sum_{k=1}^3\ell_k+\sum^m_{j=1}n_j$ is an even
  integer. Then \eqref{eq1} has a solution if and only if
  $\theta_i+\theta_j>\theta_k$ for any $i\neq j\neq k$.
\item[(b)] Suppose that $\sum_{k=1}^3\ell_k+\sum^m_{j=1}n_j$ is an odd
  integer. Then \eqref{eq1} has a solution if and only if
   $\theta_1+\theta_2+\theta_3>1$.
\end{enumerate}
\end{Theorem}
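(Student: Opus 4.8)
The plan is to reduce everything to the $2\times 2$ matrix analysis already prepared by Lemma \ref{thm2-1}. By the discussion preceding the statement, $\eqref{eq1}$ has a solution if and only if the projective monodromy group of $\eqref{A.3}$ is conjugate into $\mathrm{PSU}(2)$, i.e., the triple $(R,S,T)$ of $\eqref{A.RS}$--$\eqref{eq8}$ can be simultaneously conjugated to unitary matrices. Normalizing as in hypothesis (iii) so that $\theta_3=\max_k\theta_k$ and $T=\mathrm{diag}(\delta_3,\bar\delta_3)$ is already in $\SU(2)$, the question becomes whether $R$ (equivalently $S=RT$) is unitary after a conjugation that fixes $T$. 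Since conjugations fixing a diagonal matrix with distinct eigenvalues are themselves diagonal (plus possibly the swap), this amounts to whether the off-diagonal entries $b,c$ of $R=\SM abcd$ can be made to satisfy $c=\bar b$ and $|a|^2+|b|^2=1$; combined with $d=\bar a$ from $\eqref{eq14}$, the unitarity of $R$ reduces precisely to $|a|<1$ (strictly, so that one can rescale $b,c$ to absorb $|b|^2=1-|a|^2$), with the borderline $|a|=1$ forcing $b=c=0$ and hence a \emph{reducible}, non-unitarizable (unless trivial) representation. The sign $(-1)^{\sum n_j+\sum\ell_k}$ in $\eqref{eq8}$ is the only place the parities enter.

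First I would treat case (a), where $\sum_k\ell_k+\sum_j n_j$ is even, so $S=RT$ exactly (no sign). Then Lemma \ref{thm2-1}(a) applies verbatim: $|a|<1\iff\theta_1+\theta_2>\theta_3$. Because we chose $\theta_3$ maximal, the inequalities $\theta_3+\theta_1>\theta_2$ and $\theta_3+\theta_2>\theta_1$ are automatic from $0<\theta_j<1$, so the single condition $\theta_1+\theta_2>\theta_3$ is the same as ``$\theta_i+\theta_j>\theta_k$ for all distinct $i,j,k$.'' I must also check that $|a|<1$ is genuinely \emph{sufficient} for unitarizability, not just necessary: given $|a|<1$, set $d=\bar a$ and choose a diagonal conjugation $\mathrm{diag}(\lambda,\lambda^{-1})$ so that $|b|=|c|$; the product $bc$ is determined by $\det R=1$, i.e.\ $bc=ad-1=|a|^2-1<0$, which is real and negative, so $|b|^2=|c|^2=1-|a|^2$ can indeed be arranged, giving $R\in\SU(2)$ and then $S=RT\in\SU(2)$. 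The borderline $\theta_1+\theta_2=\theta_3$ gives $|a|=1$, hence $bc=0$; hypothesis \eqref{A.4} rules out $\theta_1+\theta_2=\theta_3$ when combined with the integrality of $\ell_k,n_j$ only partially, so I need to argue directly that a reducible monodromy (one of $b,c$ zero) cannot be unitary unless it is diagonal, which contradicts $R^3=\pm I$-type relations or, more simply, contradicts the irreducibility of the hypergeometric-type local system when $\theta_3<\theta_1+\theta_2$ fails — in any event $|a|=1$ yields no solution, matching the strict inequality in the statement.

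For case (b), $\sum_k\ell_k+\sum_j n_j$ is odd, so $S=-RT$. The eigenvalue bookkeeping in Lemma \ref{thm2-1} then changes: the condition $\tr S=\delta_2+\bar\delta_2$ becomes $-(\delta_3 a+\bar\delta_3 d)=2\cos\pi\theta_2$, which is equivalent to replacing $\theta_2$ by $1-\theta_2$ in the formula for $a$. Re-running the computation of $|a|^2$ and the factorization of $\Delta$ with $\theta_2\mapsto 1-\theta_2$ turns the condition $|a|<1$ into $\theta_1+(1-\theta_2)>\theta_3$ and $|\theta_1-(1-\theta_2)|<\theta_3$; using $\cos\pi(1-\theta_2)=-\cos\pi\theta_2$, the clean outcome is $\theta_1+\theta_2+\theta_3>1$ together with $\theta_1+\theta_3\le 1+\theta_2$ etc., and one checks that under hypothesis (ii) ($\theta_i+\theta_j\le 1$) the only surviving constraint is $\theta_1+\theta_2+\theta_3>1$. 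This is exactly the stated criterion. Again $\theta_1+\theta_2+\theta_3=1$ is the $|a|=1$ borderline and is excluded by the strict inequality for the same reducibility reason as in case (a).

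\textbf{Main obstacle.} The routine part is the trigonometric identity manipulation, already essentially done in Lemma \ref{thm2-1}; the substitution $\theta_2\mapsto 1-\theta_2$ for the odd case is mechanical. The genuinely delicate point is the \emph{sufficiency} direction and the treatment of the threshold $|a|=1$: one must show that $|a|<1$ not only is necessary but actually produces a unitary (not merely unitarizable-in-principle) representation, which requires the sign check $bc=|a|^2-1<0$, and one must show that at $|a|=1$ the representation is reducible and hence — since the problem's local data force it to be non-trivially reducible — cannot be conjugated into $\SU(2)$, so no metric exists. Packaging these two edge considerations cleanly, while also verifying that the normalization ``$\theta_3$ maximal'' loses no generality (it does not, since the three singular points $0,1,\infty$ play symmetric roles up to a Möbius change of variable on $\P^1$, and the argument that $T$ may be taken diagonal uses only $\alpha_3\notin\Z$), is where the care is needed.
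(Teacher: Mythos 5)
Your proposal follows essentially the same route as the paper: reduce existence of a solution of \eqref{eq1} to simultaneous unitarizability of the monodromy triple, normalize $T$ diagonal with $\theta_3$ maximal (harmless by a M\"obius permutation of $0,1,\infty$), and use Lemma \ref{thm2-1} together with a diagonal rescaling exploiting $bc=|a|^2-1<0$ for sufficiency; your necessity argument via the diagonal commutant of $T$ is also the paper's. Two points deserve comment. First, for case (b) the paper replaces $\theta_3$ by $1-\theta_3$ and $T$ by $-T$, so that Part (a) applies verbatim with all hypotheses of Lemma \ref{thm2-1} intact; your substitution $\theta_2\mapsto 1-\theta_2$ breaks hypotheses (ii)--(iii) of that lemma (for instance $(1-\theta_2)+\theta_3$ can exceed $1$, and $1-\theta_2$ becomes the maximal angle), so you genuinely must re-run the factorization of $\Delta$ as you indicate --- it does go through and yields $\theta_1+\theta_2+\theta_3>1$, but you cannot cite the lemma as stated. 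Second, your claim that the borderline $|a|=1$ ``in any event yields no solution'' is not correct: $bc=0$ only forces one of $b,c$ to vanish, and if in fact $b=c=0$ the representation is diagonal and perfectly unitary --- this is exactly the delicate threshold phenomenon analyzed in Theorem \ref{theorem: 1.1} and the examples of Section \ref{section: proof of theorem 1.1}. The theorem is unaffected because hypothesis \eqref{A.4} excludes $\theta_1+\theta_2=\theta_3$ (and $\theta_1+\theta_2+\theta_3=1$) outright, which is all the paper uses; no reducibility argument is needed, and the one you sketch would not decide the borderline case anyway.
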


\begin{proof} Let $R$, $S$, and $T$ be defined by \eqref{A.T} and
  \eqref{A.RS}. We need to determine when they are simultaneously
  conjugate to unitary matrices, under the assumption that \eqref{A.4}
  holds.

  Consider first the case $\sum\ell_k+\sum n_j$ is even. In such a
  case, we have $S=RT$. Since for any permutation $\tau$ of the three
  points $0$, $1$, and $\infty$, there is always a M\"obius
  transformation $\gamma$ satisfying $\gamma z=\tau(z)$ for all
  $z\in\{0,1,\infty\}$, without loss of generality, we may assume that
  $\theta_3=\max_k\theta_k$. Then the condition
  $\theta_i+\theta_j>\theta_k$ for any $i\neq j\neq k$ simply means
  $\theta_1+\theta_2>\theta_3$, which we assume now. Moreover, we may assume that
  $T=\SM{\delta_3}00{\overline\delta_3}$ after a common conjugation,
  where $\delta_3=e^{\pi i\theta_3}$.

  Write $R=\SM abcd$. By \eqref{eq14}, we have $d=\overline a$.
  By Lemma \ref{thm2-1}, $\theta_1+\theta_2>\theta_3$ if and only if
  $|a|<1$ and hence $bc=|a|^2-1<0$. Set $P=\SM\mu001$, where $\mu$ is
  a real number such that
  $$
  \mu^2=-\frac{bc}{|c|^2}=-\frac b{\overline c}.
  $$
  We have $P^{-1}TP=T$ and
  $$
  P^{-1}RP=\M a{\mu^{-1}b}{\mu c}d,
  $$
  which is unitary since $\mu^{-1}b=-\mu\overline c=-\overline{\mu
    c}$. This proves that if $\theta_1+\theta_2>\theta_3$, then
  \eqref{eq1} has a solution.

  Conversely, suppose that \eqref{eq1} has a solution. Then there
  exists a matrix $P$ such that $\hat T=P^{-1}TP$ and $\hat R=P^{-1}RP$ are both
  unitary. Now it is known that every matrix in $\SU(2,\C)$
  is conjugate to a diagonal matrix and the conjugation can be taken
  inside $\SU(2,\C)$. Hence, there exists a matrix $Q$ in $\SU(2,\C)$
  such that $Q^{-1}\hat TQ=T$. Then $Q^{-1}\hat RQ\in\SU(2,\C)$. In
  particular, the $(1,1)$-entry of 
  $Q^{-1}\hat RQ$ has absolute value $\le 1$. Since $PQ$ commutes with
  $T$ and $T$ is diagonal but not a scalar matrix, $PQ$ must be a
  diagonal matrix. Therefore, 
  the $(1,1)$-entry of $R$ also has absolute value $\le 1$. It follows
  that, by Lemma \ref{thm2-1}, $\theta_1+\theta_2>\theta_3$ (as
  the case $\theta_1+\theta_2=\theta_3$ is excluded from our
  consideration by \eqref{A.4}). We conclude that under the assumptions that
  \eqref{A.4} holds and that $\sum\ell_k+\sum n_j$ is even,
  \eqref{eq1} has a solution if and only if
  $\theta_i+\theta_j>\theta_k$ for any $i\neq j\neq k$.

  For the case $\sum\ell_k+\sum n_j$ is odd, we simply
  apply the result in Part (a) to $\theta_1,\theta_2,1-\theta_3$ with
  $T$ replaced by $-T$ and
  conclude that \eqref{eq1} has a solution if and only if
  $\theta_1+\theta_2+\theta_3>1$. This completes the proof.
\end{proof}

\end{document}